\newtheorem{theorem}{Theorem}
\newtheorem{proposition}[theorem]{Proposition}
\newtheorem{lemma}[theorem]{Lemma}
\newtheorem{corollary}[theorem]{Corollary}
\theoremstyle{definition}
\newtheorem{definition}[theorem]{Definition}
\newtheorem{example}[theorem]{Example}
\newtheorem{remark}[theorem]{Remark}
\numberwithin{theorem}{section}
\providecommand{\leftsquigarrow}{%
  \mathrel{\mathpalette\reflect@squig\relax}%
}
\newcommand{\reflect@squig}[2]{%
  \reflectbox{$\m@th#1\rightsquigarrow$}%
}
\renewcommand{\phi}{\varphi} 
\newcommand{\id}{\mathrm{id}}
\newcommand{\ol}[1]{\overline{#1}}
\newcommand{\emphbf}[1]{\textbf{#1}}
\newcommand{\RN}[1]{  
  \textup{\uppercase\expandafter{\romannumeral#1}}%
}
\definecolor{sebgreen1}{rgb}{0.019,0.317,0.149}
\definecolor{sebgreen2}{rgb}{0.784,0.952,0.780}
\definecolor{chrisyellow1}{rgb}{1,0.73,0}
\definecolor{chrisyellow2}{rgb}{0.99,0.99,0.75}
\definecolor{sebdarkgreen}{rgb}{0.019,0.317,0.149}
\definecolor{sebgreen}{RGB}{36,157,55}
\definecolor{seblightgreen}{rgb}{0.784,0.952,0.780}
\definecolor{sebblue}{RGB}{0,123,194}
\definecolor{seblightblue}{RGB}{194,242,255}
\newcommand{\mm}{\mathfrak{m}}
\newcommand{\pp}{\mathfrak{p}}
\newcommand{\qq}{\mathfrak{q}}
\renewcommand{\AA}{\mathbb{A}}
\newcommand{\CC}{\mathbb{C}}
\newcommand{\FF}{\mathbb{F}}
\newcommand{\GG}{\mathbb{G}}
\newcommand{\PP}{\mathbb{P}}
\newcommand{\QQ}{\mathbb{Q}}
\newcommand{\ZZ}{\mathbb{Z}}
\newcommand{\OO}{\mathscr{O}}
\newcommand{\UU}{\mathcal{U}}
\newcommand{\calC}{\mathcal{C}}
\newcommand{\calH}{\mathcal{H}}
\newcommand{\calP}{\mathcal{P}}
\newcommand{\calZ}{\mathcal{Z}}
\newcommand{\calQ}{\mathcal{Q}}
\newcommand{\Ann}{\mathrm{Ann}}
\newcommand{\AAnn}{\underline{\mathrm{Ann}}}
\newcommand{\Aut}{\mathrm{Aut}}
\newcommand{\cond}{\mathrm{cond}}
\newcommand{\Cond}{\mathrm{Cond}}
\newcommand{\Cr}{\mathrm{Cr}}
\newcommand{\Grass}{\mathrm{Gr}}
\newcommand{\Hilb}{\mathrm{Hilb}}
\newcommand{\Hom}{\mathrm{Hom}}
\newcommand{\EEnd}{\underline{\mathrm{End}}}
\newcommand{\im}{\mathrm{im}}
\newcommand{\EE}{\mathscr{E}}
\newcommand{\moduli}{\mathcal{M}}
\newcommand{\Moduli}{\ol{\mathcal{M}}}
\newcommand{\Proj}{\mathrm{Proj}\,}
\newcommand{\Span}{\mathrm{Span}\,}
\newcommand{\Spec}{\mathrm{Spec}\,}
\newcommand{\Stab}{\mathrm{Stab}}
\renewcommand{\tilde}{\widetilde}
\newcommand{\Ter}{\mathrm{Ter}}
\newcommand{\val}{\mathrm{val}}
\newcommand{\Tersing}{\mathrm{Ter}^{\mathrm{sing}}}   
\newcommand{\Tergl}{\mathrm{Ter}^{\mathrm{gl}}}       
\newcommand{\Terrig}{\mathrm{Ter}^{\mathrm{ord}}}     
\newcommand{\Hrig}{\calH^{\mathrm{ord}}}
\newcommand{\Zrig}{\calZ^{\mathrm{ord}}}
\newcommand{\Crig}{\mathcal{C}^{\mathrm{ord}}}
\newcommand{\Ctilrig}{\tilde{\mathcal{C}}^{\mathrm{ord}}}
\newcommand{\Qrig}{\calQ^{\mathrm{ord}}}
\newcommand{\Terunrig}{\mathrm{Ter}} 
\newcommand{\Gtil}[1][]{\widetilde{\Gamma}%
  \ifx\relax#1\relax\else_{#1}\fi}
\newcommand{\Diag}{\tilde{D}}          
\newcommand{\imDiag}{D}  
\newcommand{\Sch}{\mathrm{Sch}}
\newcommand{\Set}{\mathrm{Set}}
\theoremstyle{definition}
\newenvironment{customthm}[1]
  {\innercustomthm}
  {\endinnercustomthm}
\title{A stratification of moduli of arbitrarily singular curves}
\author{Sebastian Bozlee, Christopher Guevara, and David Smyth}
\date{\today} 
\begin{document}

\begin{abstract}
We introduce a new moduli stack $\EE_{g,n}$ of ``equinormalized curves,'' closely related to the moduli stack of all reduced,
connected algebraic curves. We construct a stratification $\bigsqcup_\Gamma \EE_\Gamma$ of $\EE_{g,n}$ indexed by generalized dual graphs and prove
that each stratum $\EE_{\Gamma}$ is a fiber bundle over a finite quotient of a product of $\moduli_{g,n}$s. The fibers are moduli schemes
parametrizing subalgebras of a fixed algebra, and are in principle explicitly computable as locally closed subschemes of products of
Grassmannians. We thus obtain a remarkably explicit geometric description of moduli of reduced curves with arbitrary singularities.
\end{abstract}

\maketitle

\tableofcontents

\section{Introduction}

One of the most useful features of the moduli stack of stable pointed curves $\Moduli_{g,n}$ is its stratification
into curves of fixed topological type indexed by dual graphs. Dual graphs and the associated stratification have played an essential role in the study of moduli stacks of algebraic curves since the first papers on stable curves \cite{deligne_mumford,knudsen_projectivity_II}, appearing for example in the Picard group of $\Moduli_{g,n}$ \cite{arbarello_cornalba_pic}, its Chow ring \cite{keel_intersection_theory_m0n}, connections to tropical geometry \cite{chan_tropical_intro, ccuw}, and alternate compactifications of $\moduli_{g,n}$ \cite{smyth_zstable}.

The purpose of this paper is to construct a similar stratification
for moduli of connected, reduced pointed curves with arbitrary isolated singularities.
Our strata $\EE_\Gamma$ are indexed by ``combinatorial types'' $\Gamma$ generalizing dual graphs. These are bipartite graphs encoding the combinatorial data of which singularities meet which components, as well
as other discrete invariants which we describe more precisely below.

Na\"ively, one might hope to directly stratify the moduli stack $\UU_{g,n}$ of all connected, reduced pointed curves with arbitrary isolated singularities.

\begin{definition} (\cite[Appendix B]{smyth_zstable}) \label{def:U_gn}
For non-negative integers $g, n$, we define $\UU_{g,n}$ (or $\UU_{g}$, if $n = 0$) to be the stack whose fiber over a scheme $S$ is the category of flat, proper, finitely presented morphisms of algebraic spaces $\pi : C \to S$ with connected, reduced, pure one-dimensional geometric fibers of arithmetic genus $g$, together with $n$ sections $p_1, \ldots, p_n : S \to C$.
\end{definition}

However, stratifying $\UU_{g,n}$ would require analyzing the deformation spaces of arbitrary curve singularities, a daunting prospect. We will work instead on a stack $\EE_{g,n}$ parametrizing curves $C$ in $\UU_{g,n}$ together with a normalization $\nu : \tilde{C} \to C$ and a lift of its markings to $\tilde{C}$.

\begin{definition} \label{def:E_gn}
Let $g \geq 0, n \geq 0$ be integers. Given a scheme $S$, a \emphbf{family of $n$-pointed equinormalized curves of genus $g$ over $S$}
consists of
\begin{enumerate}
  \item A smooth and proper morphism of algebraic spaces $\tilde{\pi} : \tilde{C} \to S$, with pure one-dimensional geometric fibers, together with $n$ sections $p_1, \ldots, p_n : S \to \tilde{C}$;
  \item A family of curves $\pi : C \to S$ in $\UU_{g}(S)$;
  \item An affine morphism $\nu : \tilde{C} \to C$ over $S$,
\end{enumerate}
such that
\begin{enumerate}
  \item $\nu^\sharp : \OO_{C} \to \nu_*\OO_{\tilde{C}}$ is injective,
  \item the $\OO_C$ module $\Delta \coloneqq \nu_*\OO_{\tilde{C}}/\OO_C$ is supported on a subscheme of $C$ finite over $S$,
  \item $\pi_*\Delta$ is locally free of finite rank on $S$
\end{enumerate}
A family of equinormalized curves over the spectrum of an algebraically closed field will simply be called a \emphbf{normalized curve}.

An isomorphism of such families is defined in the obvious way. Define the \emphbf{stack of equinormalized curves} $\EE_{g,n}$ to be the stack over $\Sch$ whose fiber over $S$ is the category of families of $n$-marked equinormalized curves of genus $g$ over $S$.
\end{definition} 

Occasionally we will consider marked points or subschemes of $\tilde{C}$ corresponding to the preimages of singularities of $C$. We will always refer to these as \emph{branch markings} or \emph{branch subschemes} to distinguish them from the ordinary marked points considered above.

\medskip

There is a natural morphism of fibered categories $\EE_{g,n} \to \UU_{g,n}$ taking
\[
  (\nu : \tilde{C} \to C, p_1, \ldots, p_n) \mapsto (C, \nu(p_1),\ldots, \nu(p_n))
\]

The stack $\EE_{g,n}$ has essentially the same geometric points as $\UU_{g,n}$ in the sense of the proposition below. The point is that a singular curve extends to a normalized curve in a unique way by the universal property of normalization.

\begin{proposition}[see Proposition \ref{prop:equinormalized_to_ugn}]
Let $k$ be an algebraically closed field. The restriction of $\EE_{g,n}(k) \to \UU_{g,n}(k)$ to the locus with smooth markings is a bijection on isomorphism classes of $k$-points, and indeed is an equivalence of categories. If we also consider the locus where markings can collide with singularities, the map $\EE_{g,n}(k) \to \UU_{g,n}(k)$ is finite-to-one on isomorphism classes of $k$ points. 
\end{proposition}

However, the topology is markedly different.

\begin{example}
The preimage of $\Moduli_{g,n} \subseteq \UU_{g,n}$ in $\EE_{g,n}$ is naturally isomorphic to the disjoint union of the usual strata in $\Moduli_{g,n}$. Intuitively, the normalization of one stable curve can be continuously deformed into the normalization of another
stable curve only if the two stable curves have the same dual graph. See Example \ref{ex:stable_equinormalized_curves}.
\end{example}

Accordingly, we conceive of $\EE_{g,n}$ as a partial stratification of $\UU_{g,n}$, although this is not literally the case in positive characteristic, see Example \ref{ex:egn_not_stratification_positive_char}.

Even so, there is further stratification to do in order to arrive at a stratification by combinatorial type analogous to that by dual graph. For example, singularities may collide or split apart, which we will take to change the underlying combinatorial type.

\begin{example} \label{ex:cusp_is_limit_of_nodes}
Consider the family of curves $C$ over $\AA^1 = \Spec \ZZ[t]$ cut out by the equation $y^2z = x(x - tz)^2$ in $\PP^2$. The fibers over $t \neq 0$ are nodal cubics and the special fiber over $t = 0$ is a cuspidal cubic. The family $C$ can be enhanced to a family of equinormalized curves
\[
\begin{tikzcd}
  \tilde{C} = \PP^1_{\AA^1_t} \ar[r,"\nu"] \ar[rd, "\tilde{\pi}"] & C \ar[d, "\pi"] \\
    & \AA^1_t,
\end{tikzcd}
\]
where the map $\nu$ glues the point $\sqrt{t}$ of $\PP^1$ to the point $-\sqrt{t}$ when $t \neq 0$, and crimps the tangent vector at the origin to a cusp when $t = 0$.
Explicitly, if we write $\tilde{C} = \Proj \ZZ[u,v,t]$ where $u,v$ have degree one and $t$ has degree 0, then we may set $\nu : \tilde{C} \to C$ to be the map induced by the morphism of graded rings $\ZZ[x,y,z,t] /(y^2z - x(x-tz)^2) \to \ZZ[u,v,t]$ taking $x \mapsto u^2v$, $y \mapsto u(u^2-tv^2)$, and $z \mapsto u^3.$

We will take the curves over $D(t)$---the nodal cubics---and the curve over $V(t)$---a cuspidal cubic---to have different combinatorial types.
\end{example}

\bigskip

We now state our main theorems, leaving some details for when we have established more background.

\begin{customthm}{I}[see Theorem \ref{thm:algebraicity} and Theorem \ref{thm:stratification_by_gamma}]
For each combinatorial type $\Gamma$, there is an algebraic stack $\EE_{\Gamma}$ over $\QQ$ parametrizing equinormalized curves of type $\Gamma$. 
\end{customthm}

The algebraic stacks $\EE_{\Gamma}$ form a locally closed stratification of $\EE_{g,n}$
in the following sense.

\begin{customthm}{II}[see Proposition \ref{prop:stratification_by_delta_deltaprime} and Theorem \ref{thm:stratification_by_gamma}]
For any $\QQ$-scheme $S$ and morphism $f : S \to \EE_{g,n}$ there is a decomposition $\sqcup_{\Gamma} S_{\Gamma} \to S$ of $S$ into disjoint locally closed subschemes, compatible with base change, such that $f|_{S_{\Gamma}}$ factors through $\EE_\Gamma$. 
\end{customthm}

We reserve a proof of the algebraicity of the full ambient stack $\EE_{g,n}$ for future work as it requires a technical argument somewhat orthogonal to the main ideas of this paper. However, we do show algebraicity of a coarse stratification of $\EE_{g,n}$ in which only a pair of numerical invariants are fixed rather than the full data of a combinatorial type.

\begin{customthm}{III}[see Theorem \ref{thm:algebraicity} and Theorem \ref{thm:stratification_by_gamma}]
For each $\delta, c$:
\begin{itemize}
    \item There is an algebraic stack $\EE_{g,n}^{\delta,c}$ parametrizing equinormalized curves of total delta invariant $\delta$ and total rank of the conductor locus $c$.
    \item There is a moduli stack $\moduli^\delta_{g,n}$ parametrizing certain smooth, possibly disconnected $n$-pointed curves. The forgetful morphism $\EE_{g,n}^{\delta,c} \to \moduli^\delta_{g,n}$ taking $(\nu : \tilde{C} \to C, p_1, \ldots, p_n)$ to $(\tilde{C}, p_1, \ldots, p_n)$ is separated and representable by schemes.
    \item For any scheme $S$ and morphism $f : S \to \EE_{g,n}$ there is a decomposition $\sqcup_{\delta,c} S^{\delta,c} \to S$ of $S$ into disjoint locally closed subschemes, compatible with base change, such that $f|_{S^{\delta,c}}$ factors through $\EE^{\delta,c}_{g,n}$.
    \item When $S$ is a $\QQ$-scheme, the decomposition $\sqcup_{\delta,c} S^{\delta,c}$ is refined by the decomposition $\sqcup_{\Gamma} S_{\Gamma}.$
\end{itemize}
\end{customthm}

Our last main result gives a geometric description of $\EE_{\Gamma}.$ We recall that the stratum of $\Moduli_{g,n}$ parametrizing stable curves $C$ with dual graph $\Gamma$ is isomorphic to a finite quotient of a product of $\moduli_{g,n}$s by taking $C$ to its pointed normalization $\tilde{C}$.
Analogously, there is a map $\EE_{\Gamma} \to \moduli_\Gamma$
where $\moduli_\Gamma$ is a finite quotient of a product of $\moduli_{g,n}$s (restricted to $\Spec \QQ$), taking an equinormalized curve $\nu : \tilde{C} \to C$ to $\tilde{C}$ along with the unordered preimages of $C$'s singularities.

However, this map is not an isomorphism. Since $C$ may have more complicated singularities than nodes, it is necessary to provide some further information to recover $\nu : \tilde{C} \to C$ from $\tilde{C}$, namely that of a sheaf of subalgebras of $\OO_{\tilde{C}}$. In fact, each stratum $\EE_\Gamma$ forms a fiber bundle over $\moduli_{\Gamma}$.

We state this as our last main result.

\begin{customthm}{IV}[see Theorem \ref{thm:e_gamma_fiber_bundle}] \label{thm:e_gamma_fiber_bundle_intro}
The morphism of $\QQ$-algebraic stacks $\EE_{\Gamma} \to \moduli_\Gamma$ is a fiber bundle in the \'etale topology with fiber an explicitly computable moduli scheme parametrizing certain subalgebras of a fixed algebra.
\end{customthm}

Therefore, \emph{by combining the well-understood geometry of moduli of smooth curves with the geometry of moduli of subalgebras, we open up a new, tractable approach to understanding moduli of singular curves}.

\medskip

The restriction to characteristic zero in Theorem I, II, and IV is due to technical issues related to stratifying the Hilbert scheme of points of a smooth curve, see Example \ref{ex:positive_characteristic_bad}. The majority of our foundational work is independent of characteristic, and we suspect that similar results could be achieved over $\FF_p$ in which the $\EE_\Gamma \to \moduli_{\Gamma}$ are fppf-bundles. 

\subsection{Strategy}
The central observation motivating this paper is that a singular curve $C$ can be recovered from its normalization $\tilde{C}$ by specifying a subalgebra of $\OO_{\tilde{C}}$. Our strategy is to develop the theory of moduli of such subalgebras
with the necessary finesse to apply it to moduli stacks of algebraic curves.

Moduli schemes, called \emphbf{territories}, parametrizing subalgebras of a fixed algebra were originally considered by Ishii in \cite{ishii_moduli_subrings}. One quickly runs into a major difficulty when applying her construction to curves: roughly speaking, families of subalgebras of 1-dimensional algebras over non-reduced schemes are far too numerous for their moduli to be representable by a scheme of finite type \cite[pg 507, remark (ii)]{ishii_moduli_subrings}. In \cite{ishii_moduli_subrings}, this difficulty is avoided by restricting the moduli functor to reduced schemes. This has the unfortunate side effect of throwing out any non-reduced structure that territories might rightfully have.

\bigskip

Fortunately, a good scheme structure can be recovered with some more technique. One can show that it suffices to parametrize subalgebras of a finite rank quotient $\OO_{\tilde{C}} / \mathscr{I}$ where $\mathscr{I}$ is a sheaf of ideals satisfying $\nu_*\OO_{\tilde{C}} \supseteq \OO_C \supseteq \mathscr{I}$. Accordingly, in Section \ref{ssec:territories_algebras}, we present a construction of a genuine moduli scheme parametrizing subsheaves of algebras of locally free finite rank sheaves of algebras, which we again call a territory. This generalizes the main construction of \cite{ishii_moduli_subrings} which considered specifically the subalgebras of a $k$-algebra. Our approach involves a direct description of the equations cutting out territories inside of Grassmannians, which we find simpler than Ishii's original argument using Noetherian induction.

Next, one is led to consider what ideal $\mathscr{I}$ to mod out by. The most natural choice is the largest possible choice: the conductor ideal $\cond_{\OO_{\tilde{C}}/\OO_C} = \Ann_{\OO_C}(\nu_*\OO_{\tilde{C}})$. Once again there is a major difficulty: conductor ideals do not commute with base change in general. In Section \ref{ssec:territories_conductors} we make the key observation that if the ``total conductance'' $\dim_{k(x)} ( \nu_*\OO_{\tilde{C}} / \cond_{\OO_{\tilde{C}}/\OO_C} )$ is locally constant for $x \in S$, then conductors do commute with base change. In such a situation, $\nu_*\OO_{\tilde{C}} / \cond_{\OO_{\tilde{C}}/\OO_C}$ is locally free, and we call its rank the \emphbf{conductance} $c$ of $\tilde{C}$ over $C$. Therefore, by passing to an appropriate locally closed stratification of the territory parametrizing subalgebras of fixed conductance, we can ensure that conductor ideals do commute with base change. This gives us the technical groundwork to prove algebraicity of $\EE_{g,n}^{\delta,c}$.

\bigskip

Section \ref{sec:territories_singularities} studies the territories of the specific algebras needed for our application to curves, those of the form $A_{\vec{c}} = \ZZ[t_1]/t_1^{c_1} \times \cdots \times \ZZ[t_m]/t_m^{c_m}$ where $\vec{c} = (c_1, \ldots, c_m)$. The main result is the decomposition of the territory of $A_{\vec{c}}$ into a disjoint union of products of ``territories of curve singularities.'' In subsection \ref{ssec:crimping}, we prove that the crimping spaces of van der Wyck \cite{fred_thesis} are orbits of the action of a subgroup of $\Aut(A_{\vec{c}})$ on the territory of a curve singularity.

\bigskip

In Section \ref{sec:territories_schemes}, we promote the construction of territories from algebras to schemes with a specified sheaf of ideals $\mathscr{I}$. Descent allows us to quickly generalize to representable morphisms of algebraic stacks.

\bigskip

In Section \ref{sec:equinormalized_curves}, we consider the stack of equinormalized curves in more detail. In light of our observations on conductance, we introduce the substack $\EE^{\delta,c}_{g,n}$ of $\EE_{g,n}$ parametrizing equinormalized curves of fixed total delta invariant and conductance.
We then use territories of stacks to construct $\EE^{\delta,c}_{g,n}$ as an algebraic stack representable over a suitable moduli stack of pointed curves.

\bigskip

In Section \ref{sec:hilbert_stratification}, we consider the problem of stratifying the Hilbert scheme of points of a curve according to how many points collide. This is an essential step in the construction of our stratification of $\EE_{\Gamma}$s. In order for this stratification to behave well, we introduce a characteristic 0 hypothesis.

\bigskip

In Section \ref{sec:combinatorial_type}, we introduce combinatorial types, our notion of generalized dual graph. The \emphbf{combinatorial type} $\Gamma(\nu)$ of an equinormalized curve $\nu : \tilde{C} \to C$ is a bipartite graph with a vertex for each singularity of $C$,
a vertex for each irreducible component of $C$, and an edge for each branch of a singularity from the singularity to the component to which it belongs, as well as additional data recording
locations of marked points, certain discrete invariants of singularities, and genera of components. The invariant of combinatorial type is sufficiently fine to ensure that the
morphism of Theorem IV is a fiber bundle.

We draw combinatorial types with circles for components, squares for singularities, half-edges for distinguished points,
and labels denote the markings, genera, and branch conductances. See Figure \ref{fig:combinatorial_type_example}.

\begin{figure}[h]
\begin{center}
\includegraphics[width=3.5in]{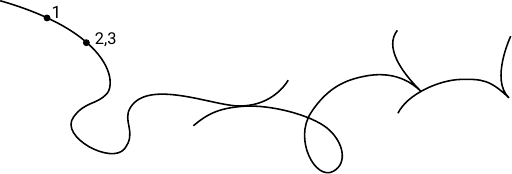}

\bigskip

\begin{tikzpicture}[scale=1.5]
\draw (0,0) --node[above]{\tiny$3$} (1,0) --node[above]{\tiny$3$} (2,0) --node[above]{\tiny$2$} (3,0) --node[above]{\tiny$1$} (4,0) -- node[above]{\tiny$2$} (5,0);
\draw (-.5,.4) node[left]{\tiny$1$} -- (0,0);
\draw (-.5,-.4) node[left]{\tiny$2,3$} -- (0,0);
\draw (2,0) to[out=-60,in=60]node[right]{\tiny$1$} (2,-1) (2,0) to[out=-120,in=120] node[left]{\tiny$1$} (2,-1);
\draw[fill=white]
  (0,0) circle (5pt)
  (1,0) +(-.2,-.2) rectangle +(.2,.2)
  (2,0) circle (5pt)
  (3,0) +(-.2,-.2) rectangle +(.2,.2)
  (4,0) circle (5pt)
  (5,0) +(-.2,-.2) rectangle +(.2,.2)
  (2,-1) +(-.2,-.2) rectangle +(.2,.2);
\node at (0,0) {\tiny$1$};
\node at (1,0) {\tiny$2$};
\node at (2,0) {\tiny$0$};
\node at (3,0) {\tiny$1$};
\node at (4,0) {\tiny$0$};
\node at (5,0) {\tiny$1$};
\node at (2,-1) {\tiny$0$};
\end{tikzpicture}
\end{center}

\caption{A sketch of a singular pointed curve and the combinatorial type of its normalization map. The singularities, from left to right, are an $A_5$ singularity $y^2 = x^6$ of genus 2, a node, a cusp meeting a smooth branch transversely,
and a cusp.}
\label{fig:combinatorial_type_example}
\end{figure}

\bigskip

In Section \ref{sec:stratification}, we construct the algebraic stacks $\EE_\Gamma$ and $\moduli_\Gamma$, prove that the $\EE_\Gamma$s stratify
$\EE^{\delta,c}_{g,n}$, and prove that $\EE_\Gamma \to \moduli_\Gamma$ is a fiber bundle. The strategy is to first construct a territory bundle over a moduli stack
of smooth curves with both ordered markings and ordered branch markings. Once this is done, we then forget the ordering of branch markings, and analyze the result.

\bigskip

We conclude the paper with an extended example in Section \ref{sec:territory_colliding_sings}, studying the stratification of $\EE^{2,4}_{2,0}$. Although it is located at the end of the main text, we encourage the reader to refer to it as early as possible.

\bigskip

We also include an index of notation at the end of the paper.

\subsection{Relation to other work}

There has been considerable interest in alternate compactifications of the moduli stack of curves \cite{schubert_pseudostable, hassett_weighted_curves, smyth_zstable, fedorchuk_smyth, battistella_m_stable, bkn_qstable, blankers_bozlee}, and in the moduli stacks appearing in the log minimal model program for $\Moduli_{g,n}$ \cite{hassett_hyeon, alper_fedorchuk_smyth_flip2}. Moduli of curves with singularities worse than nodes have also been used to construct smooth modular compactifications of moduli stacks of stable maps in genus 1 and 2 \cite{rsw, battistella_carocci_smooth_maps, zheng2021moduli}, with applications to realizability of tropical curves \cite{rsw2} and smoothability of stable maps \cite{battistella_carocci_plane_curves}. The stratification of the usual compactification $\Moduli_{g,n}$ by dual graph has been a central tool in the study of moduli of curves since \cite{knudsen_mumford_projectivity_I}. Therefore it is desirable to stratify the alternate modular compactifications of $\moduli_{g,n}$ as well in order to better access their geometry. Our results are a first step in this direction.

In the papers \cite{bkn_qstable, blankers_bozlee}, a stratification of Gorenstein pointed curves of genus one was a key ingredient in obtaining their classifications of modular compactifications of $\moduli_{1,n}$ by Gorenstein curves. The stratification presented in this paper is considerably more general. We therefore anticipate applications to the study of alternate modular compactifications of $\moduli_{g,n}$.

Moduli stacks of curves with singularities worse than nodes were used to determine the integral Chow ring of $\Moduli_{2,1}$ in \cite{dilorenzo_pernice_vistoli} and the almost-integral Chow ring of $\Moduli_{3}$ in the series of papers \cite{pernice_paper1, pernice_paper2,pernice_paper3,pernice_paper4}. More recently, the papers \cite{battistella_dilorenzo_wall_crossing, battistella_dilorenzo_all_elliptic_chow} compute the integral Chow rings of all modular compactifications of $\moduli_{1,n}$ by Gorenstein curves, including many curves with singularities worse than nodes.  In light of these results, it seems plausible that moduli stacks of curves with worse
than nodal singularities will have further applications to the intersection theory of $\Moduli_{g,n}$ and related moduli stacks. Our results promise a way to approach the Chow rings of strata within such moduli stacks of worse-than-nodal curves. In particular, the territories of singularities with certain small numerical invariants are simply $\AA^1$ and $\GG_m$, which make the corresponding fiber bundles $\EE_\Gamma \to \moduli_{\Gamma}$ into $\AA^1$-bundles or $\GG_m$-bundles, respectively. These sorts of spaces should have approachable Chow rings relative to those of the base.

A natural question related to our stratification is: which strata $\EE_{\Gamma}$ intersect the closure of other strata? This amounts to the question of which singularities are formed when singularities are permitted to collide, which has been studied in the case
of plane curves in \cite{kerner_on_collisions, kerner_equinormalizable_deformations}. Our framework provides a promising setting within which to study such questions for abstract (not necessarily planar) families of curves: see
Section \ref{ssec:curves_normalized_by_p1}. We direct the reader to \cite{kerner_equinormalizable_deformations} for a review of the related literature.

Simultaneous normalization of families of curves was studied in the complex analytic setting in \cite{teissier} and in the algebraic setting in \cite{chiang-hsieh_lipman, greuel_pfister_fiberwise_normalization}. The main result is that a flat family of curves over a normal base admits a simultaneous normalization if and only if the delta invariant of its fibers is constant. These papers can be seen as providing a criterion for lifting families in $\UU_{g,n}$ to families in $\EE_{g,n}$.

Ishii \cite{ishii_moduli_subrings} introduced territories for the purpose of studying singularities with fixed normalization, proved that territories of local rings are connected, and described a stratification of territories of unibranch singularities by semigroup. In a subsequent paper, she constructed moduli spaces of curves with unibranch singularities of fixed $\delta$-invariant \cite{ishii_moduli_equisingular_curves}. Our spaces $\EE_{\Gamma}$ may be regarded as a generalization of these spaces. Hamilton \cite{hamilton_complete_subalgebras} studied subalgebras of $\CC\llbracket t \rrbracket$, and gave an explicit algorithm for computing defining equations for moduli of unibranch singularities with fixed semigroup. Bavula \cite{Bavula} constructed a variety parametrizing the subalgebras of $\CC[t]$ containing some power of the ideal $(t)$ and described the automorphism groups of these subalgebras. Guevara \cite{chris_thesis} studied the connected subalgebras of $\CC\llbracket t_1 \rrbracket \times \cdots \times \CC\llbracket t_m \rrbracket$, producing a recursive algorithm for computing them, and an ``Isom-Hilb'' stratification of associated territories. These works can be viewed as giving algorithms for determining (in certain special cases) the defining equations of the territories discussed in Section \ref{sec:territories_singularities}.

Our territories of singularities are related to the crimping spaces of van der Wyck's thesis \cite{fred_thesis}. These crimping spaces parametrize the possible ways to normalize a fixed isomorphism class of singularity. In comparison, the isomorphism class of a singularity may vary inside of a fixed territory of a curve singularity. See Theorem \ref{thm:crimp_vs_ter} for a more precise comparison.

\subsection{Acknowledgements}

We are grateful for conversations with Jarod Alper, Dori Bejleri, Luca Battistella, Aise Johan de Jong, Gavril Farkas, Maksym Fedorchuk, Leo Herr, David Holmes, Nathan Ilten, Jake Levinson, Han-Bom Moon, Patrick McFaddin, Ian Morrison, Anand Patel, Michele Pernice, Dhruv Ranganathan, Dave Swinarski, Montserrat Teixidor i Bigas, and Jonathan Wise. We also thank the anonymous referee for many suggestions that have improved the paper and helped to uncover issues in positive characteristic.

\subsection{Tool and computational resource disclosure}

Macaulay2 \cite{M2} was used for several computer algebra computations identified in the text. ChatGPT (OpenAI) and Claude (Anthropic) were used during the preparation of the manuscript for literature discovery, consistency and error checking, and editorial assistance, including the index of notation. All outputs were reviewed and edited by the authors, who take full responsibility for the mathematical content and the final text.

\section{Moduli of subsheaves of algebras}
\label{sec:territories_algebras}

\subsection{Territories of sheaves of algebras of fixed corank}
\label{ssec:territories_algebras}

It will be important for us to consider the possible extensions of a family of smooth curves $\tilde{\pi} : \tilde{C} \to S$
to a family of equinormalized curves $\nu : \tilde{C} \to C$. Affine locally, this amounts to choosing a subalgebra $\OO_C$ of $\OO_{\tilde{C}}$. 

In this section we construct an extension of Ishii's moduli functors of subalgebras \cite{ishii_moduli_subrings} from families of subalgebras of a $k$-algebra to families of subalgebras of a quasicoherent sheaf of algebras. Ishii considered essentially the same functor in \cite[Section 1]{ishii_moduli_equisingular_curves}, however, in order to give a self-contained exposition and in order to set up some generalizations
to our setting we provide our own construction.

For the remainder of this section, let $S$ be a scheme and $\mathscr{A}$ a quasicoherent sheaf of $\OO_S$-algebras. For brevity, given an $S$-scheme $f : T \to S$, we may write $f^*\mathscr{A}$ or $\mathscr{A}_T$ (if $f$ is understood) for $\mathscr{A} \otimes_{\OO_S} \OO_T$.
Let $\delta$ be a non-negative integer. We define the following functor on $S$-schemes. 

\begin{definition} \label{def:territory}
Given an $S$-scheme $f: T \to S$, a \emphbf{family of subalgebras of $\mathscr{A}$ of corank $\delta$} on $T$ is a quasi-coherent $\OO_T$-subalgebra $\mathscr{B}$ of $\mathscr{A}_T$ such that the quotient $\OO_T$-module $\mathscr{A}_T / \mathscr{B}$ is locally free of rank $\delta$.

We define a functor $F^\delta_{\mathscr{A}} : (\Sch/S)^{op} \to \Set$ by:
\begin{enumerate}
    \item If $f : T \to S$ is an $S$-scheme, $F^\delta_{\mathscr{A}}(T \to S)$ is the set of families of subalgebras of $\mathscr{A}$ of corank $\delta$ on $T$.
    \item If $g : T \to T'$ is a morphism of $S$-schemes, then $F^\delta_{\mathscr{A}}(g) : F^\delta_{\mathscr{A}}(T') \to F^\delta_{\mathscr{A}}(T)$ is defined by taking a family of subalgebras to its pullback.
\end{enumerate}

Given such an algebra $\mathscr{B}$, we write $\Delta$ for the quotient $\mathscr{A}_T/\mathscr{B}$. Observe that $\Delta$ may be regarded
as either a $\mathscr{B}$-module, or as an $\OO_T$-module.

\end{definition}

There are several associated kinds of functoriality.

\begin{lemma} \label{lem:ter_base_change}
Let $T \to S$ be a morphism. Then there is a canonical isomorphism $F^\delta_{\mathscr{A}} \times_S T \cong F^\delta_{\mathscr{A}_T}$.
\end{lemma}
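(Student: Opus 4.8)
The plan is to unwind both sides into functors on $\Sch/T$ and exhibit an object-wise bijection that is visibly natural. Recall that for a presheaf $F$ on $\Sch/S$ the fiber product $F \times_S T$ corresponds, under the equivalence between presheaves on $\Sch/S$ lying over $h_T$ and presheaves on $\Sch/T$, to the functor sending an $S$-scheme $h : U \to T$ to $F(U \to S)$, where $U \to S$ is the composite $f \circ h$. So it suffices to produce, for each such $U$, a bijection between the set of quasi-coherent $\OO_U$-subalgebras $\mathscr{B} \subseteq (f\circ h)^*\mathscr{A}$ with $(f\circ h)^*\mathscr{A}/\mathscr{B}$ locally free of rank $\delta$, and the set of quasi-coherent $\OO_U$-subalgebras $\mathscr{B}' \subseteq h^*(\mathscr{A}_T)$ with $h^*(\mathscr{A}_T)/\mathscr{B}'$ locally free of rank $\delta$, and then check naturality in $U$.

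The key input is the transitivity of pullback: for $U \xrightarrow{h} T \xrightarrow{f} S$ there is a canonical isomorphism of quasi-coherent $\OO_U$-algebras
\[
  \theta_h : h^*(\mathscr{A}_T) = (\mathscr{A} \otimes_{\OO_S} \OO_T) \otimes_{\OO_T} \OO_U \;\xrightarrow{\ \sim\ }\; \mathscr{A} \otimes_{\OO_S} \OO_U = (f\circ h)^*\mathscr{A},
\]
namely the associativity isomorphism for the tensor product, which is an isomorphism of sheaves of algebras since it is compatible with multiplication. First I would note that $\theta_h$ restricts to a bijection on sets of quasi-coherent subalgebras, and that for a subalgebra $\mathscr{B}'$ it identifies the quotient $h^*(\mathscr{A}_T)/\mathscr{B}'$ with $(f\circ h)^*\mathscr{A}/\theta_h(\mathscr{B}')$ as $\OO_U$-modules; hence one quotient is locally free of rank $\delta$ iff the other is. This yields the required object-wise bijection $F^\delta_{\mathscr{A}_T}(U\to T) \xrightarrow{\sim} F^\delta_{\mathscr{A}}(U \to S)$.

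For naturality, given a morphism $k : U' \to U$ of $T$-schemes I would invoke the cocycle compatibility $\theta_{h \circ k} = k^*(\theta_h) \circ (\text{transitivity iso for } k^* h^*)$, i.e. the fact that the transitivity isomorphisms for pullbacks of quasi-coherent sheaves compose coherently. Combined with the definition of the restriction maps in $F^\delta_{\mathscr{A}_T}$ and $F^\delta_{\mathscr{A}} \times_S T$ as pullbacks of families of subalgebras (which are computed by the same pullbacks of sheaves), this shows the squares relating the bijections for $U$ and $U'$ commute. Therefore the collection of bijections assembles into an isomorphism of functors on $\Sch/T$, equivalently a canonical isomorphism $F^\delta_{\mathscr{A}} \times_S T \cong F^\delta_{\mathscr{A}_T}$.

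There is no serious obstacle here: the entire content is the transitivity of $\otimes$ and its coherence (cocycle condition), which is standard. The only points that require a moment's care are verifying that $\theta_h$ is an algebra isomorphism (not merely a module isomorphism) so that subalgebras go to subalgebras, and keeping the bookkeeping of composites $U \to T \to S$ straight so that the naturality square is the one actually needed for the fiber-product description of $F^\delta_{\mathscr{A}} \times_S T$.
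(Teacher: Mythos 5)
Your proof is correct. The paper states this lemma without proof, and your argument --- identifying $F^\delta_{\mathscr{A}} \times_S T$ with the corresponding functor on $\Sch/T$ and using the transitivity isomorphism $h^*(\mathscr{A}_T) \cong (f \circ h)^*\mathscr{A}$ of sheaves of algebras together with its cocycle coherence for naturality --- is precisely the routine verification the authors are taking for granted.
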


It is natural to consider subalgebras of subalgebras. When $\mathscr{A}$ is finite locally free, the subalgebras of a fixed subalgebra form
a closed subfunctor.

\begin{lemma} \label{lem:subalgebra_of_subalgebra}
Let $\alpha + \beta = \delta$ be an integer partition of $\delta.$
Let $\mathscr{B} \in F^\beta_{\mathscr{A}}(S)$. Then $F^{\alpha}_{\mathscr{B}}$ is a subfunctor of $F^\delta_{\mathscr{A}}$. If $\mathscr{A}$ is in addition a locally free $\OO_S$-algebra of finite rank, then $F^{\alpha}_{\mathscr{B}}$ is a closed subfunctor of $F^\delta_{\mathscr{A}}$.
\end{lemma}
\begin{proof}
We first show that $F^\alpha_{\mathscr{B}}$ is a subfunctor of $F^\delta_{\mathscr{A}}$. 
Let $T \overset{f}{\to} S$ be an $S$-scheme and $\mathscr{C} \in F^\alpha_{\mathscr{B}}(T \overset{f}{\to} S)$. We want to show that
$\mathscr{C}$ is a family of subalgebras of $f^*\mathscr{A}$ of corank $\delta$.
Let $\Delta_{\mathscr{A}/\mathscr{B}}, \Delta_{\mathscr{B}/\mathscr{C}}, \Delta_{\mathscr{A}/\mathscr{C}}$ be the respective cokernels of $\mathscr{B} \to \mathscr{A}$, $\mathscr{C} \to f^*\mathscr{B}$, and $\mathscr{C} \to f^*\mathscr{B} \to f^*\mathscr{A}$. With the aid of the snake lemma we can form a diagram
\[
\begin{tikzcd}
  & & 0 \ar[d] & 0 \ar[d] & \\
  0 \ar[r] & \mathscr{C} \ar[r] \ar[d, equals] & f^*\mathscr{B} \ar[r] \ar[d] & \Delta_{\mathscr{B}/\mathscr{C}} \ar[r] \ar[d] & 0 \\
  0 \ar[r] & \mathscr{C} \ar[r] & f^*\mathscr{A} \ar[d] \ar[r] & \Delta_{\mathscr{A}/\mathscr{C}} \ar[r] \ar[d] & 0 \\
  & & f^*\Delta_{\mathscr{A}/\mathscr{B}} \ar[r, equals] \ar[d] & f^*\Delta_{\mathscr{A}/\mathscr{B}} \ar[d] &  \\
  & & 0 & 0 & 
\end{tikzcd}
\]
Since $\Delta_{\mathscr{A}/\mathscr{B}}$ is locally free of rank $\beta$ and $\Delta_{\mathscr{B}/\mathscr{C}}$ is locally free of rank $\alpha$, it follows that $\Delta_{\mathscr{A}/\mathscr{C}}$ is locally free of rank $\delta = \alpha + \beta$, as required.

It is clear that the inclusion $F^\alpha_{\mathscr{B}} \to F^\delta_{\mathscr{A}}$ is natural.

Now suppose that $\mathscr{A}$ is locally free of finite rank. To check that $F^\alpha_{\mathscr{B}}$ is a closed subfunctor of $F^\delta_{\mathscr{A}}$ we must show that for each $S$-scheme $T \overset{f}{\to} S$ and each $\mathscr{C} \in F^\delta_{\mathscr{A}}(T \overset{f}{\to} S)$, there is a closed subscheme $Z$ of $T$ with the universal property that for all $S$-morphisms $g : W \to T$, $g^*\mathscr{C} \subseteq (f \circ g)^*\mathscr{B}$ if and only if $g$ factors through $Z$.

Since $f^*\mathscr{A}$ and $\Delta_{\mathscr{A}/\mathscr{C}}$ are locally free of finite rank, the sequence $\mathscr{C} \to f^*\mathscr{A} \to \Delta_{\mathscr{A}/\mathscr{C}}$ admits a local splitting. It follows $\mathscr{C}$ is locally finitely generated. Checking at points, $\mathscr{C}$ has constant rank, so $\mathscr{C}$ is locally free of finite rank. Consider the diagram
\[
\begin{tikzcd}
  & & \mathscr{C} \ar[d,hook] \ar[rd, "\phi"] & & \\
  0 \ar[r] & f^*\mathscr{B} \ar[r] & f^*\mathscr{A} \ar[r] & \Delta_{\mathscr{A}/\mathscr{B}} \ar[r] & 0
\end{tikzcd}
\]
The composite $\mathscr{\phi}$ is a map of $\OO_T$-modules of finite rank, so it has a well-defined vanishing locus $Z$ in $T$. Now, if $g : W \to T$ is a morphism of $S$-schemes, then $g^*\mathscr{C}$ factors through $(g\circ f)^*\mathscr{B}$ if and only if $g^*\phi$ vanishes, if and only if $g$ factors through $Z$.
\end{proof}

When $\mathscr{A}$ is locally free of finite rank, $F^{\delta}_{\mathscr{A}}$ admits a representing scheme.

\begin{definition}
Suppose that $\mathscr{A}$ is locally free of fixed rank $n$. The \emphbf{Grassmannian} is the functor
$\Grass(d, \mathscr{A}) : (\Sch/S)^{op} \to \Set$ defined by:
\begin{enumerate}
  \item If $f : T \to S$ is an $S$-scheme, $\Grass(d, \mathscr{A})(T)$ is the set of quasicoherent subsheaves $\mathscr{E}$ of $\mathscr{A}$ such that the quotient $\mathscr{A} / \mathscr{E}$ is locally free of rank $n - d$.
  \item If $g : T \to T'$ is a morphism of $S$-schemes, the induced map $\Grass(d, \mathscr{A})(T') \to \Grass(d,\mathscr{A})(T)$ is induced by pullback.
\end{enumerate}
\end{definition}

It is well-known that $\Grass(d, \mathscr{A})$ is representable by a locally projective scheme over $S$. (For example, one may apply Zariski descent to \cite[Tag 089R]{stacks-project}.)

\begin{theorem} \label{thm:territory_representable}
Suppose $\mathscr{A}$ is locally free of fixed rank $n$ over $S$. Then the functor on $S$-schemes $F^{\delta}_{\mathscr{A}}$ is representable by a closed subscheme $\Ter^\delta_{\mathscr{A}}$ of $\Grass(n - \delta, \mathscr{A})$.
\end{theorem}

\begin{proof}
Let $G = \Grass(n - \delta, \mathscr{A})$. Let $\mathscr{U} \in G(G)$ be the universal subsheaf and let $Q = \mathscr{A}_G / \mathscr{U}$. The multiplication map $\mathscr{A}_G \otimes_{\OO_G} \mathscr{A}_G \to \mathscr{A}_G$ yields a multiplication $\mathscr{U} \otimes_{\OO_G} \mathscr{U} \to \mathscr{A}_G$. Compose with the quotient map to obtain a morphism $\alpha : \mathscr{U} \otimes_{\OO_G} \mathscr{U} \to Q$.

Since $\mathscr{A}_G$ and $Q$ are locally free of ranks $n$ and $\delta$, $\mathscr{U}$ is locally free of rank $n - \delta$. Since $\mathscr{U}$ and $Q$ are locally free of finite rank, we get a well-defined vanishing locus $V(\alpha)$ of $\alpha$. This locus has the universal property that $T \to G$ factors through $V(\alpha)$ if and only if $\alpha|_T = 0$ if and only if $\mathscr{U}|_T$ is a multiplicatively closed subsheaf of $\mathscr{A}|_T$.

Next, let $\beta : \OO_G \to Q$ be the composite of the multiplication by $1$ map $\OO_G \to \mathscr{A}_G$ with the canonical projection $\mathscr{A}_G \to Q$. There is a well-defined vanishing locus $V(\beta)$ with the universal property that $T \to G$ factors through $V(\beta)$ if and only if $\mathscr{U}|_T$ contains $\OO_T \cdot 1$. Altogether, $V(\alpha) \cap V(\beta)$ cuts out the $\OO_T$-subalgebras of $\mathscr{A}|_T$, as desired.
\end{proof}

Of course, the algebras $\mathscr{A}$ we want to consider --- the structure sheaves of smooth curves over $S$ --- are not locally free of finite rank.
We can remedy this by passing to finite locally free quotients of $\mathscr{A}$. By considering lots of such quotients, we can approximate 
a ``territory of a completion.'' Let's begin by setting definitions.

\begin{definition} \label{def:completed_territory}
If $\mathscr{I}$ is a quasicoherent sheaf of ideals in $\mathscr{A}$ such that $\mathscr{A}/\mathscr{I}$ is flat over $S$, set $F^{\delta}_{(\mathscr{A},\mathscr{I})}$ to be the subfunctor of $F^{\delta}_{\mathscr{A}}$ where $\mathscr{B}$ contains $\mathscr{I}_T$. If $\mathscr{A}/\mathscr{I}^r$ is locally free for all $r$, then denote by $F^{\delta}_{(\mathscr{A},\mathscr{I}^\infty)}$ the sheafification of the union $\bigcup_{r} F^{\delta}_{(\mathscr{A},\mathscr{I}^r)}$ in the Zariski topology.
\end{definition}

\begin{theorem} \label{thm:contains_I_vs_subalg_of_A/I}
Let $\mathscr{I}$ be a quasicoherent sheaf of ideals in $\mathscr{A}$ such that $\mathscr{A}/\mathscr{I}$ is flat over $S$. Then the
functors $F^{\delta}_{\mathscr{A}/\mathscr{I}}$ and $F^{\delta}_{(\mathscr{A},\mathscr{I})}$ are isomorphic via the natural transformation on $T \overset{f}{\to} S$-points
\[
  \mathscr{C} \subseteq f^*(\mathscr{A}/\mathscr{I}) \quad \mapsto \quad \pi_T^{-1}(\mathscr{C}) \subseteq f^*\mathscr{A},
\]
where $\pi_T : f^*\mathscr{A} \to f^*(\mathscr{A}/\mathscr{I})$ is the pullback of the canonical projection $\pi : \mathscr{A} \to \mathscr{A}/\mathscr{I}$ to $T$.
\end{theorem}

\begin{proof}
Let $\pi_T: \mathscr{A}_T \rightarrow (\mathscr{A/I})_T$ be the pullback of the projection $\pi: \mathscr{A} \rightarrow \mathscr{A/I}$. Given $\mathscr{C} \in F^\delta_{\mathscr{A/I}}(T)$, consider the $\OO_T$-subalgebra $\pi_T^{-1}(\mathscr{C}) \subset \mathscr{A}_T$. Clearly $\pi_T^{-1}(\mathscr{C})$ contains $\ker \pi_T = \pi_T^{-1}(0)$.

Since $\mathscr{A/I}$ is flat over $S$, $(\mathscr{A/I})_T$ is flat over $T$, and $(\mathscr{A/I})_T/\mathscr{C}$, being a locally free $\OO_T$-module of rank $\delta$, is also flat. Hence $\mathscr{C}$ is flat. The flatness of $\mathscr{A/I}$ implies $\ker \pi_T = \mathscr{I}_T$.

Now given a family $\mathscr{C} \in F^\delta_{\mathscr{A/I}}(T)$, consider $\pi_T^{-1}(\mathscr{C}) \subset \mathscr{A}_T$. Clearly this is an $\OO_T$-subalgebra containing $\ker \pi_T$. Consider the following diagram:
\[
\begin{tikzcd}
& \mathscr{I}_T \ar[r,equals] \ar[d] & \mathscr{I}_T \ar[d] & & \\
0 \ar[r] & \pi_T^{-1}(\mathscr{C}) \ar[r] \ar[d] & \mathscr{A}_T \ar[r] \ar[d,"\pi_T"] & \mathscr{A}_T/\pi_T^{-1}(\mathscr{C}) \ar[r] \ar[d,"\varphi"] & 0 \\
0 \ar[r] & \mathscr{C} \ar[r] \ar[d] & (\mathscr{A/I})_T \ar[r] \ar[d] & (\mathscr{A/I})_T/\mathscr{C} \ar[r] & 0 \\
& 0 & 0 & &
\end{tikzcd}
\]
By the snake lemma, $\ker \varphi = 0$, and by the commutativity of the rightmost square, $\varphi$ is surjective, so $\mathscr{A}_T/\pi_T^{-1}(\mathscr{C})$ is a locally free $\OO_T$-module of rank $\delta$. Since $\pi_T^{-1}(\mathscr{C})$ contains $\mathscr{I}_T$, we have $\pi_T^{-1}(\mathscr{C}) \in F^{\delta}_{(\mathscr{A},\mathscr{I})}(T)$.

We then define functions $\eta_T: F^\delta_{\mathscr{A/I}}(T) \rightarrow F^{\delta}_{(\mathscr{A},\mathscr{I})}(T)$ for each $S$-scheme $T$, given by $\eta_T(\mathscr{C}) = \pi_T^{-1}(\mathscr{C})$. Note that $\eta_T$ is a bijection, derived from the one-to-one correspondence between subrings of $\mathscr{A/I}$ and subrings of $\mathscr{A}$ containing $\mathscr{I}$; the corank of the quotient $\OO_T$-module is preserved by the snake lemma above. If 
\[
\begin{tikzcd}
  T' \ar[rr,"h"] \ar[dr,"f"'] & & T \ar[dl,"f'"] \\
  & S
\end{tikzcd}
\]
is a morphism of $S$-schemes, it is clear that $h^*(\pi_T^{-1}(\mathscr{C})) = \pi_{T'}^{-1}(h^*\mathscr{C})$, so that the collection of maps $\eta_T$ indeed define a natural isomorphism between these functors. 
\end{proof}

\begin{corollary} \label{cor:subalg_with_supp_representable}
If $\mathscr{I}$ is a sheaf of ideals of $\mathscr{A}$ such that $\mathscr{A}/\mathscr{I}$ is locally free of finite rank, then $F^{\delta}_{(\mathscr{A},\mathscr{I})}$ is representable in the category of $S$-schemes by the $S$-scheme $\Ter^\delta_{\mathscr{A}/\mathscr{I}}$.
\end{corollary}

Our next proposition, generalizing \cite[Proposition 2]{ishii_moduli_subrings}, shows that the functor $F^{\delta}_{(\mathscr{A},\mathscr{I}^\infty)}$ is representable after restricting to \emphbf{reduced} $S$-schemes, and in this sense there is a well-behaved territory of a completion. However, $F^{\delta}_{(\mathscr{A},\mathscr{I}^\infty)}$ is poorly behaved on non-reduced schemes, see \cite[remark (ii)]{ishii_moduli_subrings}. Intuitively, many of the $S$-points of $F^{\delta}_{(\mathscr{A},\mathscr{I}^\infty)}$ for $S$ non-reduced come not from singularities varying, but from singularities moving around or splitting apart infinitesimally, which becomes easier to do as the power on $\mathscr{I}$ gets larger. Ishii's remark (ii) suggests that representing all of these possibilities is impossible with a finite type scheme. Rather than giving up representability over general schemes, we work for the rest of the paper with
more carefully chosen ideals $\mathscr{I}$, preventing singularities from moving or splitting infinitesimally and allowing us to use Theorem \ref{thm:territory_representable} for representability. The following proposition is included only for completeness.

\begin{proposition}
Let $\mathscr{I}$ be a quasicoherent sheaf of ideals in $\mathscr{A}$ such that $\mathscr{A} / \mathscr{I}^r$ is locally free of finite rank for all $r$. Then the restriction of the functor $F^{\delta}_{(\mathscr{A},\mathscr{I}^\infty)}$ to reduced $S$-schemes is represented by the $S$-scheme $(\Ter^{\delta}_{\mathscr{A}/\mathscr{I}^{2\delta}})_{red}$.
\end{proposition}

\begin{proof}
It is a general fact that for an $S$-scheme $X$ the restriction of the functor $\Hom(-, X)$ to reduced $S$-schemes is representable by the reduced scheme $X_{red}$. Then by Corollary \ref{cor:subalg_with_supp_representable} the scheme $(\Ter^{\delta}_{\mathscr{A}/\mathscr{I}^{2\delta}})_{red}$ represents $F^{\delta}_{(\mathscr{A},\mathscr{I}^{2\delta})}$ on the category of reduced $S$-schemes. This is a subfunctor of $F^{\delta}_{(\mathscr{A}\mathscr{I}^\infty)}$, so it suffices to show for all reduced schemes $T \overset{f}{\to} S$ that
$F^{\delta}_{(\mathscr{A},\mathscr{I}^{2\delta})}(T \overset{f}{\to} S) \supseteq F^{\delta}_{(\mathscr{A},\mathscr{I}^\infty)}(T \overset{f}{\to} S)$.

Let $\mathscr{B} \in F^{\delta}_{(\mathscr{A},\mathscr{I}^\infty)}(T \overset{f}{\to} S)$. It suffices to show $\mathscr{B} \in F^{\delta}_{(\mathscr{A},\mathscr{I}^{2\delta})}$ affine locally on $T$, so we may assume that $\mathscr{B} \in F^{\delta}_{(\mathscr{A},\mathscr{I}^r)}(T)$ for some $r$ where $T = \Spec R$ is affine, $\mathscr{A}|_T \cong \widetilde{A}$ for an $R$-algebra $A$, $\mathscr{I}^{2\delta} = \widetilde{J}$ where $J$ is an ideal of $A$, and $\mathscr{A}|_T/\mathscr{B} \cong \widetilde{R^\delta}$ is free as an $\OO_T$-module. We know by \cite[Proposition 2]{ishii_moduli_subrings} that after any base change to any algebraically closed field $k$, $\mathscr{B} \otimes_{\OO_T} k$ contains $\mathscr{I}^{2\delta} \otimes_{\OO_S} k$. Equivalently, if $\phi$ is the composite $\phi : J \to A \to R^\delta$, the map $\phi \otimes_R k : J \otimes_R k \to k^\delta$ is zero for all maps $R \to k$ to an algebraically closed field.

Now let $\pp$ be any prime ideal of $R$ and $\ol{k(\pp)}$ an algebraic closure of its residue field. We have a commutative diagram
\[
  \begin{tikzcd}[column sep=large]
    J/\pp J \ar[r, "\phi\, \otimes R/\pp"] \ar[d] & (R/\pp)^\delta \ar[d] \\
    J \otimes_R \ol{k(\pp)} \ar[r, "\phi\, \otimes \ol{k(\pp)}"] & (\ol{k(\pp)})^\delta.
  \end{tikzcd}
\]
The lower horizontal arrow is zero since $\ol{k(\pp)}$ is algebraically closed, and the right vertical arrow is injective. Therefore the top horizontal arrow is also zero.

Next, we have a diagram
\[
  \begin{tikzcd}[column sep=huge]
    J \ar[r, "\phi"] \ar[d] & R^\delta \ar[d] \\
    \prod\limits_{\pp \in \Spec R} J/\pp J \ar[r, "\prod_\pp \phi\, \otimes R/\pp"] & \prod\limits_{\pp \in \Spec R} (R/\pp)^\delta.
  \end{tikzcd}
\]
The lower horizontal arrow is zero by the previous step. The right vertical arrow has kernel $\left(\bigcap\limits_{\pp \in \Spec R} \pp\right)^{\times \delta}$, which vanishes since $R$ is reduced. Then the right vertical arrow is injective, and therefore the top horizontal arrow is zero. We conclude that $\mathscr{I}^{2\delta}|_T \subseteq \mathscr{B}$, as desired.
\end{proof}

It is convenient to introduce a lemma now which will be useful when we discuss territories of curve singularities below.

\begin{lemma} \label{lem:product_of_territory_to_territory_of_product}
Suppose $\delta_1, \ldots, \delta_p$ are positive integers with sum $\delta$ and $\mathscr{A}_1, \ldots, \mathscr{A}_p$ are locally free $\OO_S$-algebras of fixed finite rank. Then the natural transformation
\begin{align*}
  F^{\delta_1}_{\mathscr{A}_1} \times \cdots \times F^{\delta_p}_{\mathscr{A}_p} &\to F^{\delta}_{\mathscr{A}_1 \times \cdots \times \mathscr{A}_p} \\
  (\mathscr{B}_1, \ldots, \mathscr{B}_p) &\mapsto \mathscr{B}_1 \times \cdots \times \mathscr{B}_p
\end{align*}
induces a closed immersion $\Ter^{\delta_1}_{\mathscr{A}_1} \times \cdots \times \Ter^{\delta_p}_{\mathscr{A}_p} \to \Ter^{\delta}_{\mathscr{A}_1 \times \cdots \times \mathscr{A}_p}$.
\end{lemma}
\begin{proof}
It is clear that this is a well-defined natural transformation. The induced map of schemes is proper, since the source and target are proper. It is also a monomorphism since the map on $T$-points $(\mathscr{B}_1, \ldots, \mathscr{B}_p) \mapsto \mathscr{B}_1 \times \cdots \times \mathscr{B}_p$ is injective. The result follows since proper monomorphisms are closed immersions \cite[Tag 04XV]{stacks-project}.
\end{proof}

\subsection{Conductors and territories of fixed conductance}
\label{ssec:territories_conductors}

Now we move towards a better choice of ideal: the conductor ideal. Conductor ideals are important invariants of finite extensions of algebras. For example, the conductor ideal coincides with the relative dualizing sheaf of a normalization map of curves $\nu : \tilde{C} \to C$ and the dimension of the quotient by the conductor can be used to test for Gorenstein curves, see Lemma \ref{lem:conductor_bounds_and_gorenstein}. However conductor ideals have the disadvantage of not commuting with base change in general, see Example \ref{ex:cond_no_commute}. This makes it difficult to write down moduli functors involving the conductor. In a similar context in \cite{fred_thesis}, this difficulty was avoided by keeping track of the conductor ideal only up to radical, as the radical of the conductor does behave well with base change.

We present a different solution in this section: by fixing the rank $c$ of $\mathscr{A}$ modulo the conductor, the conductor ideal itself can be made to commute with base change. Motivated by this, we construct a locally closed stratification of $\Ter^{\delta}_{\mathscr{A}}$ by $c$.

We begin by recalling some facts about conductors.

\begin{definition} \label{def:conductor}
Given a subsheaf of algebras $\mathscr{B} \subseteq \mathscr{A}_T$ over $T$ of corank $\delta$, the \emphbf{conductor ideal of $\mathscr{B}$ inside $\mathscr{A}_T$} is the sheaf of ideals
\[
  \cond_{\mathscr{A}_T/\mathscr{B}} = \AAnn_{\mathscr{B}}(\Delta).
\]

We denote by $\Delta'$ the sheaf of $\mathscr{B}$-modules $\mathscr{B} / \cond_{\mathscr{A}_T/\mathscr{B}}$ and denote its rank by $\delta'$.
\end{definition}

\begin{lemma} \label{lem:affine_conductor_largest_ideal}
The conductor $\cond_{\mathscr{A}_T/\mathscr{B}}$ is an ideal of $\mathscr{A}_T$ and it is characterized by the property that it is
the largest ideal of $\mathscr{A}_T$ also contained in $\mathscr{B}$.
\end{lemma}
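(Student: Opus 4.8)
The plan is to verify the two claims — that $\cond_{\mathscr{A}_T/\mathscr{B}}$ is an ideal of $\mathscr{A}_T$, and that it is the largest such ideal contained in $\mathscr{B}$ — directly from the definition $\cond_{\mathscr{A}_T/\mathscr{B}} = \AAnn_{\mathscr{B}}(\Delta)$ with $\Delta = \mathscr{A}_T/\mathscr{B}$. Since everything in sight is a question about sheaves that can be checked on affine opens, I would immediately reduce to the affine case: write $A = \mathscr{A}_T(U)$, $B = \mathscr{B}(U)$, $\Delta = A/B$, and $\mathfrak{c} = \Ann_B(A/B) = \{ b \in B : bA \subseteq B \}$, and argue for modules over a ring. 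The sheaf statements then follow by gluing.

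First I would show $\mathfrak{c}$ is an ideal of $A$. By definition $\mathfrak{c} \subseteq B$, and it is clearly a $B$-submodule. To see it is closed under multiplication by all of $A$: if $b \in \mathfrak{c}$ and $a \in A$, then for every $a' \in A$ we have $(ab)a' = b(aa') \in bA \subseteq B$ since $aa' \in A$; hence $abA \subseteq B$, and also $ab = b a \in bA \subseteq B$, so $ab \in \mathfrak{c}$. Thus $\mathfrak{c}$ is an $A$-submodule of $B$, i.e. an ideal of $A$ contained in $B$.

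Next I would prove the maximality. Let $\mathfrak{a}$ be any ideal of $A$ with $\mathfrak{a} \subseteq B$. For $x \in \mathfrak{a}$ and any $a \in A$ we have $xa \in \mathfrak{a} \subseteq B$, so $xA \subseteq B$, meaning $x$ annihilates $A/B$; since also $x \in B$, we get $x \in \Ann_B(A/B) = \mathfrak{c}$. Hence $\mathfrak{a} \subseteq \mathfrak{c}$, so $\mathfrak{c}$ is the largest ideal of $A$ contained in $B$. Finally, these constructions are compatible with restriction to smaller opens (the formation of $\Ann_{\mathscr{B}}(\Delta)$ is local), so they glue to give the corresponding statements for the sheaves $\cond_{\mathscr{A}_T/\mathscr{B}}$, $\mathscr{A}_T$, and $\mathscr{B}$ over $T$.

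I do not expect any genuine obstacle here; the only mild points of care are (i) making sure the "largest ideal" characterization is interpreted correctly sheaf-theoretically — it is the largest among quasicoherent ideal sheaves of $\mathscr{A}_T$ contained in $\mathscr{B}$, which follows from the affine statement since containment of quasicoherent subsheaves is checked on a cover — and (ii) noting that $\AAnn$ is being taken inside $\mathscr{B}$ but the resulting object is automatically an $\mathscr{A}_T$-module, which is exactly the content of the first displayed claim.
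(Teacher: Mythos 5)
Your proof is correct, and since the paper simply omits the proof of this lemma as routine, your argument is exactly the standard verification one would expect: the element-level check that $\{b \in B : bA \subseteq B\}$ is an $A$-ideal and absorbs every $A$-ideal contained in $B$, followed by gluing. The only point worth making explicit is why the formation of $\AAnn_{\mathscr{B}}(\Delta)$ is local (so the affine computation really computes the sheaf): here $\Delta$ is locally free of finite rank over $\OO_T$, hence a finitely generated $\mathscr{B}$-module, and annihilators of finitely generated modules commute with localization.
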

\begin{proof}
Omitted.
\end{proof}

In other words, given $\mathscr{B} \in \Ter^{\delta}_{\mathscr{A}}(S)$, the conductor ideal $\cond_{\mathscr{A}_T/\mathscr{B}}$ is the largest sheaf of ideals $\mathscr{I}$ of $\mathscr{A}$ such that
$\mathscr{B} \in F^{\delta}_{(\mathscr{A},\mathscr{I})}(S) \cong \Ter^{\delta}_{\mathscr{A}/\mathscr{I}}.$ When thinking of $\cond_{\mathscr{A}_T/\mathscr{B}}$ as an ideal sheaf of $\mathscr{A}_T$ instead of $\mathscr{B}$,
we may write $\Cond_{\mathscr{A}_T/\mathscr{B}}$ instead of $\cond_{\mathscr{A}_T/\mathscr{B}}.$

\medskip

We give an example to illustrate how the conductor may fail to commute with base change in general.

\begin{example} \label{ex:cond_no_commute}
Let $k$ be a field and $R = k[t]$. Let $A = R[x]/x^2 \times R[y]/y^2$, and let $B$ be the $R$-subalgebra
\[
  B = \{ a(t)(1,1) + b(t)(tx, y) \mid a(t), b(t) \in R \} \subseteq A.
\]
We will show that the conductor $\cond_{A/B}$ is zero, but the conductor of a base change is nonzero.

Suppose $a(t)(1,1) + b(t)(tx,y) \in \cond_{A/B}$. Then in particular
\[
  (a(t)(1,1) + b(t)(tx, y)) \cdot (0, 1) = (0, a(t) + b(t)y) \in B,
\]
which implies $a(t) = b(t) = 0$. Therefore $\cond_{A/B} = 0$.

Now consider the base change $- \otimes_R R/t$. One computes that $B \otimes_R R/t$ is identified with
the $k$-subalgebra $B_0 \coloneqq k(1,1) + k(0,y)$ of $A_0 \coloneqq k[x]/x^2 \times k[y]/y^2 \cong A \otimes_R R/t$.

Consider an arbitrary element $(a+bx, c+dy) \in A_0$. Then
\[
  (0,y) \cdot (a + bx, c + dy) = (0, cy) \in B_0,
\]
so $\cond_{A_0/B_0}$ contains $(0,y)$. It follows
\[
  \cond_{A/B} \otimes_R R/t = 0 \neq \cond_{A_0/B_0}.
\]
\end{example}

Notice that the conductor jumps up in size when specializing to $t = 0.$ In order to control the behavior of the conductor, it is useful to view it as the kernel of a homomorphism.

\begin{lemma}
The conductor is the kernel of the map of $\OO_S$-modules
\[
  \mathscr{B} \to \EEnd_{\OO_S}(\Delta)
\]
taking an element $f \in \mathscr{B}$ to the multiplication by $f$ map $\Delta \to \Delta$.
In particular, the image is isomorphic to $\Delta'$.
\end{lemma}
\begin{proof}
Omitted.
\end{proof}

\begin{definition} \label{def:conductance}
The \emph{conductance at a point $t \in T$} of $\mathscr{B}$ inside $\mathscr{A}_T$ is the dimension of $(\mathscr{A}_T / \Cond_{\mathscr{A}_T /\mathscr{B}}) \otimes_{\OO_T} k(t)$ as a $k(t)$-vector space.

We say that $\mathscr{B}$ has \emph{conductance $c$} inside of $\mathscr{A}_T$ if $\mathscr{A}_T / \Cond_{\mathscr{A}_T / \mathscr{B}}$ is locally
free of rank $c$.
\end{definition}

\begin{definition} \label{def:ter_delta_c}
Let $\mathscr{A}$ be a locally free sheaf of $\OO_S$-algebras of rank $n$. 
Let $F^{\delta,c}_{\mathscr{A}}$ be the subfunctor of $F^{\delta}_{\mathscr{A}}$ determined by the condition
that $\mathscr{B} \in F^{\delta}_{\mathscr{A}}(T)$ belongs to $F^{\delta,c}_{\mathscr{A}}(T)$ if and only if
the cokernel of the map
\[
  \mathscr{B} \to \EEnd_{\OO_T}(\Delta)
\]
is a locally free $\OO_T$-module of rank $\delta^2 - c + \delta$.
\end{definition}

\begin{remark}
The subfunctor is well-defined since formation of the map
  \[
    \mathscr{B} \to \EEnd_{\OO_T}(\Delta)
  \]
commutes with pullback and cokernels are preserved by tensor product.
\end{remark}

As promised, we deduce the following essential property.

\begin{proposition} \label{prop:conductor_commutes_base_change}
Suppose $\mathscr{B} \in F^{\delta,c}_{\mathscr{A}}(T \to S)$. Then the formation of $\Delta'$ and $\cond_{\mathscr{A}_T/\mathscr{B}}$ commute with pullback in $T$, $\Delta'$ is locally free of rank $\delta' = c - \delta$ over $T$, and $\mathscr{B}$ has conductance $c$ inside $\mathscr{A}_T$.
\end{proposition}
\begin{proof}
We have a short exact sequence on $T$
\[
  0 \to \Delta' \to \EEnd_{\OO_T}(\Delta) \to Q \to 0.
\]
By assumption, $\EEnd_{\OO_T}(\Delta)$ is locally free of rank $\delta^2$ and $Q$ is locally free of rank $\delta^2 - c + \delta$.
It follows $\Delta'$ is locally free of rank $\delta'$ and $\mathscr{A}_T / \Cond_{\mathscr{A}_T / \mathscr{B}}$ is locally free of rank $c$.

Now consider a $S$-morphism $f : T' \to T$. Since $Q$ is $T$-flat,
\[
  0 \to f^*(\Delta') \to f^*\EEnd_{\OO_T}(\Delta) \to f^*Q \to 0
\]
is exact. Moreover, since formation of the map $\mathscr{B} \to \EEnd_{\OO_S}(\Delta)$ commutes with pullback, we have a commutative diagram with exact rows
\[
\begin{tikzcd}
  0 \ar[r] & \cond_{\mathscr{A}_{T'}/\mathscr{B}_{T'}} \ar[r] \ar[d] & \mathscr{B}_{T'} \ar[r] \ar[d] & \EEnd_{\OO_{T'}}(f^*\Delta) \ar[r] \ar[dash, d, "\sim"] & f^*Q \ar[r] \ar[d, equal] & 0 \\
  & 0 \ar[r] & f^*(\Delta') \ar[r] & f^*\EEnd_{\OO_T}(\Delta) \ar[r] & f^*Q \ar[r] & 0.
\end{tikzcd}
\]
It follows that $f^*(\Delta') \cong \mathscr{B}_{T'} / \cond_{\mathscr{A}_{T'}/\mathscr{B}_{T'}}$, so formation of $\Delta'$ commutes with base change. Moreover, since $\Delta'$ is $T$-flat and the map
$\mathscr{B}_{T'} \to f^*(\Delta')$ is the pullback of $\mathscr{B} \to \Delta'$, we have $\cond_{\mathscr{A}_{T'}/\mathscr{B}_{T'}} \cong f^*\cond_{\mathscr{A}_T/\mathscr{B}}$, so formation of the conductor
commutes with base change as well.
\end{proof}

\begin{theorem} \label{thm:delta_prime_stratification_affine}
Suppose $\mathscr{A}$ is locally free of rank $n$. Then the functors $F^{\delta,c}_{\mathscr{A}}$
are represented by disjoint, locally closed subschemes $\Ter^{\delta,c}_{\mathscr{A}}$ of $\Ter^{\delta}_{\mathscr{A}}$, with union
$\Ter^{\delta}_{\mathscr{A}}$.
\end{theorem}
\begin{proof}
Let $\mathscr{U} \subseteq \mathscr{A}_{\Ter^{\delta}_{\mathscr{A}}}$ be the universal subalgebra.
The subschemes $\Ter^{\delta,c}_{\mathscr{A}}$ are precisely the strata induced by the Fitting ideals of
$\EEnd_{\Ter^\delta_{\mathscr{A}}}(\Delta)/\mathscr{U}$, see \cite[Tag 05P8]{stacks-project}.
\end{proof}

\begin{example} \label{ex:genus_two_unibranch_territory}
Consider the territory $\Ter^2_{\QQ[t]/(t^4)}$. We may think of this as parametrizing the ways of inserting a genus 2 unibranch singularity on $\AA^1$ such that the conductor of the singularity contains $(t^4)$. We compute in Section \ref{ssec:territory_computations} that
\[
  \Ter^2_{\QQ[t]/(t^4)} \cong \Proj \QQ[a,b,c]/(a^3, 2a^2b, a^2c - 2ab^2),
\]
with reduction $\PP^1 = V(a)$. An $S$-point corresponds to the algebra $\OO_S \cdot 1 + \OO_S \cdot (at + bt^2 + ct^3) \subseteq \OO_S[t]/(t^4)$.

There are only two non-empty strata: $\Ter^{2,4}_{\QQ[t]/t^4}$, which is open, and $\Ter^{2,3}_{\QQ[t]/t^4}$, which is closed.
If $k$ is a field over $\QQ$, then a $k$-point $B = k \cdot 1 + k \cdot (bt^2 + ct^3) \subseteq k[t]/t^4$ factors through $\Ter^{2,3}_{\QQ[t]/(t^4)}$ if and only if $B$ contains $t^3$, i.e.,
$b = 0$. Thus, the open stratum $\Ter^{2,4}_{\QQ[t]/(t^4)}$ is the complement $D(b) \cong \AA^1$ of the point at infinity. It parametrizes various normalizations
of the ramphoid cusp, a Gorenstein singularity.

We perform a local computation in $U \coloneqq D(c) = \Spec \QQ[a, b]/(a^3,2a^2b, a^2 - 2ab^2)$ in order to find the scheme structure on $\Ter^{2,3}_{\QQ[t]/(t^4)}$. Let $B = \OO_U \cdot 1 + \OO_U \cdot (at + bt^2 + t^3)$ be the universal subalgebra of $\OO_U[t]/t^4$ over $U$. We need to compute the cokernel $Q$ of $B \to \EEnd_{\OO_{U}}(\Delta)$ then find the Fitting ideal cutting out the rank $\delta^2 - c + \delta = 3$ locus. Note that $\Delta$ has a basis given by the equivalence classes $\ol{t}$ and $\ol{t^2}$ of $t$ and $t^2$ modulo $B$. Multiplication by 1 acts as the identity on $\Delta$ (of course)
while the action of $at + bt^2 + t^3$ is given by
\begin{align*}
  (at + bt^2 + t^3) \cdot \ol{t} &= \ol{at^2 + bt^3} = -ab\ol{t} + (a - b^2)\ol{t^2} \\
  (at + bt^2 + t^3) \cdot \ol{t^2} &= \ol{at^3} = -a^2 \ol{t} - ab \ol{t^2}.
\end{align*}
Using this, we compute
\begin{align*}
  Q &\cong \frac{\mathrm{Mat}_{2 \times 2}(\OO_U)}{\OO_U \begin{bmatrix} 1 & 0 \\ 0 & 1\end{bmatrix} + \OO_U \begin{bmatrix} -ab & -a^2 \\ a - b^2 & -ab \end{bmatrix}} \\
   &\cong \OO_U^3 / \OO_U \cdot (0, -a^2, a - b^2).
\end{align*}
The appropriate Fitting ideal is generated by the entries of the vector $(0,-a^2,a - b^2)$, namely $a^2$ and $a - b^2$. Adding these to the defining ideal $(a^3, 2a^2b, a^2 - 2ab^2)$ of $U$, we obtain
\[
  (a^3, 2a^2b, a^2 - 2ab^2, a^2, a - b^2) = (a^2, a - b^2, ab^2) = (a - b^2, b^4).
\]
Setting $\epsilon = b$ (so that $a = \epsilon^2$), we conclude that
\[
  \Ter^{2,3}_{\QQ[t]/t^4} \cong \QQ[\epsilon]/(\epsilon^4).
\]
The universal subalgebra of $\QQ[\epsilon, t]/(\epsilon^4, t^4)$ of conductance 3 is spanned by $1$ and $\epsilon^2t + \epsilon t^2 + t^3$. 
The conductor ideal of the universal subalgebra is the ideal of $\QQ[\epsilon, t]/(\epsilon^4, t^4)$ generated by $(\epsilon^3 + \epsilon^2t + \epsilon t^2 + t^3)$. The unique $k$-point of $\Ter^{2,3}_{\QQ[t]/(t^4)}$, for $k$ an algebraically closed field, corresponds to the unique non-Gorenstein unibranch singularity of genus 2, $k[t^3,t^4,t^5]$.
\end{example}

\section{Territories for curve singularities}
\label{sec:territories_singularities}

We now turn our attention to the case of curve singularities. We will construct a scheme $\Tersing(g,\vec{c})$ parametrizing curve singularities with fixed invariants within their seminormalization
and $\Tergl(\mathcal{P},g,\vec{c})$ parametrizing several curve singularities with fixed invariants within their normalization. We begin by introducing some basic invariants of curve singularities.

\begin{definition} \label{def:sing_basics}
Let $C$ be a reduced algebraic curve over an algebraically closed field $k$, $p$ a singularity of $C$, and $\nu : \tilde{C} \to C$ the normalization of $C$ at $p$. The \emphbf{$\delta$-invariant} of $p$ is
\[
  \delta(p) = \dim_k(\nu_*\OO_{\tilde{C}} / \OO_C).
\]

The \emphbf{number of branches} of $p$ is
\[
  m(p) = \# \nu^{-1}(p).
\]

The \emphbf{genus} of $p$ is
\[
  g(p) = \delta(p) - (m(p) - 1).
\]
This can be interpreted as the number of conditions for a function on $\tilde{C}$ to descend to one on $C$ apart from the $m(p) - 1$ ``obvious'' relations $f(p_1) = f(p_2) = \cdots = f(p_{m(p)})$, where $\nu^{-1}(p) = \{ p_1, \ldots, p_{m(p)} \}$.
More abstractly, this number can be interpreted as the ``contribution of $p$ to the arithmetic genus of $C$.''

The \emphbf{conductor} of $p$ is the $\OO_{C}$-ideal sheaf
\[
  \cond(p) \coloneqq \AAnn_{\OO_C}(\Delta_p).
\]
It is not hard to check that $\cond(p)$ is also closed under multiplication by $\nu_*\OO_{\tilde{C}}$, so by pulling back appropriately to $\tilde{C}$, we have a sheaf of $\OO_{\tilde{C}}$ ideals
\[
  \Cond(p) \coloneqq \nu^{-1}\cond(p) \otimes_{\nu^{-1}\nu_*\OO_{\tilde{C}}} \OO_{\tilde{C}}.
\]
satisfying $\nu_*\Cond(p) = \cond(p)$. We will also call $\Cond(p)$ the \emphbf{conductor of $p$} and rely on context to distinguish between $\cond(p)$ and $\Cond(p)$.

The \emphbf{conductance} of $p$ is the quantity
\[
  c(p) = \dim_k(\OO_{\tilde{C}} / \Cond(p)).
\]
The \emphbf{branch conductance} $c(b)$ at a point $b \in \nu^{-1}(p)$ is the dimension of the stalk of $\OO_{\tilde{C}} / \Cond(p)$ at $b \in \nu^{-1}(p)$. The branch conductances and conductance satisfy
the obvious relation $c(p) = \sum_{b \in \nu^{-1}(p)} c(b)$.
\end{definition}

Conductor ideals have special significance for reduced curves: it is well-known that for a normalized curve $\nu : \tilde{C} \to C$, the conductor $\Cond_{\tilde{C}/C}$ is isomorphic to the relative dualizing sheaf
$\omega_{\nu}$. The following lemma suggests that we may regard the invariant $c$ as a measure of how close $C$ is to being Gorenstein at $p$.

\begin{lemma} \label{lem:conductor_bounds_and_gorenstein}
Let $C$ be a proper, reduced curve over an algebraically closed field $k$. Let $p$ be a singularity of $C$. Then $\delta(p) < c(p) \leq 2\delta(p),$
and $C$ is Gorenstein at $p$ if and only if $c(p) = 2\delta(p).$
\end{lemma}
\begin{proof}
This is \cite[Chapter VIII, Proposition 1.16]{altman_kleiman}.
\end{proof}

Observe that $\Gamma(\tilde{C}, \OO_{\tilde{C}} / \Cond(p)) \cong \prod_{i = 1}^m k[t_i]/ (t_i^{c_i})$, where $c_1, \ldots, c_m$ are the branch conductances, and $\Gamma(C, \OO_C / \cond(p))$ is a $\delta(p)$-codimensional subalgebra. In order to parametrize such subalgebras of $\OO_{\tilde{C}} / \Cond(p)$, it suffices to consider the territories of the following algebras.

\begin{definition} \label{def:algebra_for_curve_sing}
Let $c_1,\ldots, c_m$ be positive integers. Write $\vec{c} = (c_1, \ldots, c_m)$.
Let $A_{\vec{c}}$ denote the ring
\[
  A_{\vec{c}} = \prod_{i=1}^m \ZZ[t_i] / (t_i^{c_i}).
\]
Let $A^+_{\vec{c}}$ denote the subring
\[
  A^+_{\vec{c}} \coloneqq \{ (f_1(t_1), \ldots, f_m(t_m)) \in A_{\vec{c}} \mid f_i(0) = f_j(0) \text{ for all }i, j \}
\]
\end{definition}

We view $\Spec A_{\vec{c}}$ as the infinitesimal germ of the normalization of a curve singularity with conductances $c_1, \ldots, c_m$ and $\Spec A^+_{\vec{c}}$ as the infinitesimal germ of its seminormalization.
Indeed, it is not hard to show that $A^+_{\vec{c}}$ is the pushout of the diagram
\[
  \begin{tikzcd}
    \bigsqcup_{i = 1}^m \Spec \ZZ \ar[r] \ar[d] & \Spec A_{\vec{c}} \\
    \Spec \ZZ
  \end{tikzcd}
\]
in which the rightward arrow is the inclusion of the $m$ sections of $\Spec \ZZ[t_i] / (t_i^{c_i}) \to \Spec \ZZ$ induced by $t_i = 0$ for $i = 1, \ldots, m$. That is, fiber by fiber, $\Spec A^+_{\vec{c}}$ is obtained by gluing together the closed points of $\Spec A_{\vec{c}}$.

\medskip

Suppose given a non-negative integer $g$ and positive integers $c_1,\ldots, c_m$ such that $c_1 \geq \cdots \geq c_m$ and $g + m - 1 < c_1 + \cdots + c_m \leq 2(g + m - 1)$. Write $\delta = g + m -1$
and $c = c_1 + \cdots + c_m$.
Now, by Lemma \ref{lem:subalgebra_of_subalgebra}, since $A^+_{\vec{c}}$ is a corank $m - 1$ subalgebra of $A_{\vec{c}}$, we have a natural closed immersion
\[
  \Ter^g_{A^+_{\vec{c}}} \subseteq \Ter^{\delta}_{A_{\vec{c}}}.
\]
On the other hand, we have an immersion
\[
  \Ter^{\delta,c}_{A_{\vec{c}}} \subseteq \Ter^{\delta}_{A_{\vec{c}}}.
\]
This is an open immersion: since $c$ is equal to the rank of $A_{\vec{c}}$, it is cut out by the minimal rank locus of $\mathrm{coker} \{ \mathscr{B} \to \Hom_{A_{\vec{c}}}(\Delta,\Delta) \}$, see Theorem \ref{thm:delta_prime_stratification_affine}.

\begin{definition} \label{def:territory_of_singularity}
With notation as in the paragraph above, we define
the \emphbf{territory $\Tersing(g,\vec{c})$ of singularities of type $(g,\vec{c})$} to be the scheme
\[
  \Tersing(g,\vec{c}) \coloneqq \Ter^{\delta, c}_{A_{\vec{c}}} \cap \Ter^{g}_{A^{+}_{\vec{c}}}.
\]
\end{definition}

Among all of the subalgebras parametrized by $\Ter^{\delta, c}_{A_{\vec{c}}}$, the subscheme $\Tersing(g,\vec{c})$ parametrizes those in which all points $t_i = 0$ of $\Spec A_{\vec{c}}$ are glued together. Hence, $\Tersing(g,\vec{c})$ parametrizes the curve singularities with $m$ branches, genus $g$, and conductances $\vec{c}$. We remark that for any $T$-point $B \in \Tersing(g,\vec{c})(T)$, the conductor ideal $\cond_{A_{\vec{c}}|_T / B}$ is fixed: it is always the zero ideal.

\begin{remark}
Surprisingly, fixing the conductance within $A^+_{\vec{c}}$ is not equivalent to fixing it within $A_{\vec{c}}$. For example, let $S = \Spec \CC[x]$ and consider the family of subalgebras of $A^+_{(2,2)}$ over $S$ given by
\[
  \mathscr{B} = \OO_S \cdot 1 + \OO_S \cdot (xt_1,t_2).
\]
For all $x \in \CC$, the algebra $\CC \cdot 1 + \CC \cdot (xt_1, t_2)$ has conductor ideal $( (xt_1, t_2) )$ as a subalgebra of $A^+_{(2,2)} \otimes \CC$. Thus, the conductance within $A^+_{(2,2)}$ is always $2$. In contrast, the conductance as a subalgebra of $A_{(2,2)}$ varies: for $x \in \CC - \{ 0 \}$,
the algebra $\CC \cdot 1 + \CC \cdot (x t_1, t_2)$ has conductor ideal $0$ in $A_{(2,2)}$, while for $x = 0$ it has conductor ideal $((0,t_2))$. That is, the conductance within $A_{(2,2)}$ is generically 4, but drops to 3 at $x = 0$.

Fixing conductance within $A_{\vec{c}}$ is the correct choice for our purposes since we will be working with conductor ideals of normalizations below.
\end{remark}

\begin{remark} We comment on a subtle point related to the assumption that $c = \mathrm{rank}(A_{\vec{c}})$.
If $\vec{c}$ and $\vec{d}$ are vectors of conductances such that $c_i \leq d_i$ for all $i$, there is a natural inclusion $\Ter^{\delta,c}_{A_{\vec{c}}} \to \Ter^{\delta,c}_{A_{\vec{d}}}$ induced by taking preimages of subalgebras along the quotient map $A_{\vec{d}} \to A_{\vec{c}}$. This map is in general a nilpotent thickening. For instance, we saw in Example \ref{ex:genus_two_unibranch_territory} that $\Ter^{2,3}_{\ZZ[t]/t^4} \otimes \QQ \cong \Spec \QQ[\epsilon]/\epsilon^4$. On the other hand, $\Ter^{2,3}_{\ZZ[t]/t^3} \otimes \QQ \cong \Spec \QQ$, since its only $S$-point is the subalgebra $\OO_S \cdot 1$ of $\OO_S[t]/t^3$. Intuitively, the conductor ideal associated to points of the larger territory is allowed to vary within $V(t^4)$, so the parametrized singularities can move around or begin to smooth out infinitesimally. In the smaller territory, the conductor ideal is restricted to be the zero ideal. The appropriate territory of curve singularities is $\Tersing(2,(3)) = \Ter^{2,3}_{\ZZ[t]/t^3}$, the smaller territory.
\end{remark}

\bigskip

We will spend the rest of this section showing that, modulo an infinitesimal thickening, $\Ter^{\delta,c}_{A_{\vec{c}}}$ breaks up into a disjoint union of products of territories of singularities coming from the various ways to glue points together into singularities. We first introduce gluing data, combinatorial data that track which points of $A_{\vec{c}}$ are glued together and what the 
genera of the resulting singularities are.

\begin{definition}
If $\vec{c} = (c_1, \ldots, c_m)$ is a tuple of positive integers and $P = \{i_1, \ldots, i_p\}$ is a subset of $\{ 1, \ldots, m \}$, with $i_1 < \cdots < i_p$, denote by $\vec{c}|_P$ the tuple $(c_{i_1}, \ldots, c_{i_p})$.
\end{definition}

\begin{definition} \label{def:gluing_data}
An $m$-branch \emphbf{gluing datum} of corank $\delta$ is a tuple $(\calP, g)$ where
\begin{enumerate}
  \item $\calP$ is a partition of the set $\{1, \ldots, m \}$
  \item $g : \calP \to \ZZ_{\geq 0}$ is a function taking each part $P$ of $\calP$ to its genus $g(P)$
  \item $\sum_{P \in \calP} (g(P) + |P| - 1) = \delta$.
\end{enumerate}
\end{definition}

\bigskip

Let us introduce notation for the $k$-points of $\Spec A_{\vec{c}}$.

\begin{definition}
Let $k$ be a field, $\vec{c} = (c_1, \ldots, c_m)$ a tuple of positive integers. Write $A_{\vec{c},k}$ for the ring $A_{\vec{c}} \otimes k$. For each $i = 1, \ldots, m$, we denote by $\qq_i$ the prime ideal of $A_{\vec{c},k}$ generated by $(1, \ldots,1, t_i, 1, \ldots, 1)$. 
\end{definition}

In particular, for $A_{(1,\ldots,1),k} \cong k^m$, $\qq_i$ is the prime ideal generated by the tuple $(1, \ldots, 1, 0, 1, \ldots, 1)$ whose $i$th entry is 0.
Next, we prove a lemma on the subalgebras of $k^m$.

\begin{lemma} \label{lem:subalges_of_kn}
Let $m$ be a positive integer. Given a subset $S \subseteq \{ 1, \ldots, m \}$, write $1_S$ for the element of $k^m$ whose $i$th entry is 1 if $i \in S$ and 0 otherwise.
Then
\begin{enumerate}
  \item The $k$-subalgebras of $k^m$ are precisely the subsets of the form
\[
  B_{\mathcal{P}} = \Span \{ 1_P \mid P \in \mathcal{P} \}
\]
for some set partition $\mathcal{P}$ of $\{1, \ldots, m \}$.

  \item If $\mathcal{P}$ is a partition of $\{1, \ldots, m \},$ the parts $P$ of $\mathcal{P}$ are in bijection with the points of $\Spec B_{\mathcal{P}}$ via
  \[
     P \mapsto \mathfrak{q}_P.
  \]
  where $\mathfrak{q}_P$ is the prime ideal $1_{P^c} \cdot B_{\mathcal{P}}$ of $B_{\mathcal{P}}$.

  \item For each partition $\mathcal{P}$ of $\{1, \ldots, m\}$, the map
  \[
    \Spec k^m = \{ \mathfrak{q}_{1}, \ldots, \mathfrak{q}_{m} \} \to \Spec B_{\mathcal{P}} = \{ \mathfrak{q}_P \mid P \in \mathcal{P} \}
  \]
  induced by the inclusion $B_{\mathcal{P}} \to k^m$ sends $\mathfrak{q}_{i}$ to $\mathfrak{q}_P$ where $P$ is the part of the partition containing $i$.
\end{enumerate} 
\end{lemma}
\begin{proof}
We prove (i) and leave the proofs of (ii) and (iii) as exercises for the interested reader. It is clear that each $B_{\mathcal{P}}$ is a $k$-subalgebra of $k^m$. Conversely, suppose $B$ is a subalgebra of $k^m$. Say that a nonempty subset $S \subseteq \{1, \ldots, m \}$ is a \textit{level set} of $B$ if there exist elements $b = (b_1, \ldots, b_m) \in B, x \in k$ such that $S = \{ i \mid b_i = x \}$. Let $\mathcal{S}$ be the set of level sets of $B$.

We claim that if $S \in \mathcal{S}$, then $1_S \in B$. To see this, let $b \in B$ and write
\[
  b = x_1 1_{S_1} + x_2 1_{S_2} + \cdots + x_l 1_{S_l}
\]
where $x_1, x_2, \ldots, x_l$ are the distinct values of $b$. Then for each $i = 1, \ldots, l$,
\[
  1_{S_i} = \frac{\prod_{j \neq i} (b - x_j \cdot (1, \ldots, 1))}{ \prod_{j \neq i} (x_i - x_j)} \in B,
\]
establishing the claim. It follows that $\Span \{ 1_S \mid S \in \mathcal{S} \} \subseteq B$. The other containment is clear, as each element is a linear combination of the characteristic functions of its level sets. Thus, $B$ is the span of its level sets.

Now observe that $\mathcal{S}$ contains $\{1, \ldots, m \}$, since $1 \in B$; $\mathcal{S}$ is closed under intersections, since $1_S \cdot 1_T = 1_{S \cap T}$; and $\mathcal{S}$ is closed under complements, since $1_{S^c} = 1 - 1_S.$ For each $i \in \{1, \ldots, m\}$ let $S_i = \bigcap_{i \in S \in \mathcal{S}} S$ be the intersection of the level sets of $B$ that contain $i$. Since level sets are closed under intersection, each $S_i$ is a level set. I claim that the $S_i$'s form a set partition of $\{1, \ldots, m\}$. By construction, the union of the $S_i$'s is $\{1, \ldots, m\}$. Now suppose $i, j \in \{1, \ldots, m\}$. If $i \not\in S_j$, then $S_j^c$ is a level set containing $i$, so $S_i \subseteq S_j^c$, which implies $S_i \cap S_j = \emptyset$. Symmetrically, if $j \not\in S_i$, $S_i \cap S_j = \emptyset.$ If both $i \in S_j$ and $j \in S_i$, then every level set containing $i$ also contains $j$ and vice versa, whence $S_i = S_j$. Thus, for each $i, j \in \{1, \ldots, m \}$, either $S_i = S_j$ or $S_i \cap S_j = \emptyset.$ Therefore the $S_i$'s form a partition $\mathcal{P}$ of $\{1, \ldots, m \}$.

Clearly, $\Span \{ 1_P \mid P \in \mathcal{P} \} \subseteq \Span \{ 1_S \mid S \in \mathcal{S} \}= B$. For the other containment, suppose $S$ is a level set. Then $S = \bigcup_{i \in S} S_i$, so $1_S = \sum_{P} 1_P$ where the sum is over the $P \in \mathcal{P}$ such that $P \subseteq S$. It follows $\Span \{ 1_P \mid P \in \mathcal{P} \} = B$, as desired.
\end{proof}

\begin{theorem} \label{thm:decomposition_into_territories_of_sings_no_conductance}
There is an infinitesimal thickening
\[
  \coprod_{(\mathcal{P},g)} \left( \prod_{P \in \mathcal{P}} \Ter^{g(P)}_{A^+_{\vec{c}|_P}} \right) \to \Ter^\delta_{A_{\vec{c}}}.
\]
where the disjoint union varies over the $m$-branch gluing data $(\mathcal{P},g)$ of corank $\delta$.

Moreover, if $k$ is a field and $B$ is a $k$-point of the summand indexed by $(\mathcal{P},g)$, then the induced map $\Spec A_{\vec{c},k} \to \Spec B$ identifies points $\mathfrak{q}_i$ and $\mathfrak{q}_j$ if and only if $i, j$ belong to
 the same part of the partition $\mathcal{P}.$
\end{theorem}

\begin{proof}
We begin by constructing the map. Fix a choice of $(\mathcal{P},g)$.
Observe that
\[
  B_{\vec{c},\mathcal{P}} \coloneqq \{ (f_1(t_1), \ldots, f_m(t_m)) \in A_{\vec{c}} \mid P \in \mathcal{P} \text{ and }i,j \in P \implies f_i(0) = f_j(0) \}
\]
is a subalgebra of $A_{\vec{c}}$ of corank $m - |\mathcal{P}|$. Observe that there is an isomorphism
\[
  \phi_{(\mathcal{P},g)} : \prod_{P \in \mathcal{P}} A^+_{\vec{c}|_P} \cong B_{\vec{c},\mathcal{P}}.
\]
Define a closed immersion $\pi_{(\mathcal{P},g)} : \prod_{P \in \mathcal{P}} \Ter^{g(P)}_{A^+_{\vec{c}|_P}} \to \Ter^{\delta}_{A_{\vec{c}}}$ by the composition
\[
  \prod_{P \in \mathcal{P}} \Ter^{g(P)}_{A^+_{\vec{c}|_P}} \to \Ter^{\sum_{P \in \mathcal{P}} g(P)}_{\prod_{P \in \mathcal{P}}A^+_{\vec{c}|_P}} \to \Ter^{\sum_{P \in \mathcal{P}} g(P)}_{B_{\vec{c},\mathcal{P}}} \to \Ter^{\delta}_{A_{\vec{c}}},
\]
where the first map is that of Lemma \ref{lem:product_of_territory_to_territory_of_product}, the second is the isomorphism induced by $\phi_{(\mathcal{P},g)}$, and the third is the map of Lemma \ref{lem:subalgebra_of_subalgebra}.

Now letting $(\mathcal{P},g)$ vary, observe that the images of the various closed immersions $\pi_{(\mathcal{P},g)}$ are disjoint, so the induced map $\pi : \coprod_{(\mathcal{P},g)} \left( \prod_{P \in \mathcal{P}} \Ter^{g(P)}_{A^+_{\vec{c}|_P}} \right) \to \Ter^\delta_{A_{\vec{c}}}$ is also a closed immersion.

In order to show that the map is an infinitesimal thickening, it suffices to show that it induces a surjection on $k$-points for each field $k$.

Suppose $B \subseteq A_{\vec{c},k}$ is a $k$-subalgebra of corank $\delta$. By Lemma \ref{lem:subalges_of_kn}, $B \cap \left(\prod_{i=1}^m k \cdot 1\right)$ is a span of orthogonal idempotents $B_{\mathcal{P}} = \Span \{ 1_P \mid P \in \mathcal{P} \}$ for some partition $\mathcal{P}$ of $\{1, \ldots, m\}$.
Then $B$ decomposes as a direct product $\prod_{P \in \mathcal{P}} B \cdot 1_P$. Observe that $B \cdot 1_P \subseteq A_{\vec{c}} \cdot 1_P$, which is naturally isomorphic to $A_{\vec{c}|_P}$ by renumbering the $t_i$'s in $P$. Because $B \cap \left(\prod_{i=1}^m k \cdot 1\right) = B_{\mathcal{P}}$, the subalgebra $B \cdot 1_P$ factors through $A^+_{\vec{c}|_P}$, with some codimension $g(P)$. This gives us a point of $\Ter^{g(P)}_{A^+_{\vec{c}|_P}}$ for each $P \in \mathcal{P}$. The codimensions of $B \cdot 1_P$ in $A_{\vec{c}} \cdot 1_P$ must sum to the codimension of $B$ in $A_{\vec{c}}$, so $\sum_{P \in \mathcal{P}} (g(P) + |P| - 1) = \delta$. Altogether, we have a partition $\mathcal{P}$ of $\{1, \ldots, m\}$, a function $g : \mathcal{P} \to \ZZ_{\geq 0}$ satisfying $\sum_{P \in \mathcal{P}} (g(P) + |P| - 1) = \delta$, and a point $(B \cdot 1_P)_{P \in \mathcal{P}}$ of $\prod_{P \in \mathcal{P}} \Ter^{g(P)}_{A^+_{\vec{c}|_P}}$. Finally, under the closed immersion, $(B \cdot 1_P)_{P \in \mathcal{P}} \mapsto \prod_{P \in \calP} B \cdot 1_P = B$. We conclude that the closed immersion is a surjection on $k$-points, hence an infinitesimal thickening.

We conclude by showing the claim about identified points. If $B \subseteq A_{\vec{c}}$ belongs to a summand indexed by $(\mathcal{P}, g)$, then $B \cap \left(\prod_{i=1}^m k \cdot 1\right) = B_{\mathcal{P}}$. Therefore the induced map $\Spec A_{\vec{c}} \to \Spec B$ has as its reduction the morphism $\Spec k^m \to \Spec B_{\mathcal{P}}$ induced by the inclusion. The result follows by Lemma \ref{lem:subalges_of_kn}, part (iii).
\end{proof}

\begin{remark} The computation of $\Ter^{2}_{A_{(2,2)}}$ in Section \ref{ssec:territory_computations} shows that the preceding theorem cannot be strengthened to an isomorphism.
\end{remark}

\begin{corollary} \label{cor:decomposition_into_ter_of_sings}
There is an infinitesimal thickening
\[
  \coprod_{(\mathcal{P},g)} \prod_{P \in \calP} \Tersing(g(P),\vec{c}|_P) \to \Ter^{\delta,c}_{A_{\vec{c}}}.
\]
where the disjoint union varies over the $m$-branch gluing data $(\mathcal{P},g)$ of corank $\delta$.
\end{corollary}
\begin{proof}
The domain and codomain are open subschemes of the domain and codomain of Theorem \ref{thm:decomposition_into_territories_of_sings_no_conductance}, so it suffices to check that the $k$-points of the domain map into the $k$-points of the codomain. This is obvious, since a $k$-point $B \subseteq A_{\vec{c},k}$ of $\Ter^{\delta}_{A_{\vec{c}}}$ belongs to $\Ter^{\delta,c}_{A_{\vec{c}}}$ if and only if $\Cond_{A_{\vec{c},k}/B} = 0$ if and only if $B$ contains no $t_i^{c_i - 1}$.
\end{proof}

\begin{definition} \label{def:territory_gluing_datum}
The \emphbf{territory of the gluing datum} $(\calP, g)$, denoted by $\Tergl(\mathcal{P},g,\vec{c})$ is the union of the connected components of $\Ter^{\delta,c}_{A_{\vec{c}}}$ that meet the image of $\prod_{P \in \calP} \Tersing(g(P),\vec{c}|_P)$ under the map of Corollary \ref{cor:decomposition_into_ter_of_sings}.
\end{definition}

Note that $\Ter^{\delta,c}_{A_{\vec{c}}} = \coprod_{(\mathcal{P},g)} \Tergl(\mathcal{P},g,\vec{c})$ is a disjoint union decomposition.

\begin{remark}
Ishii's connectedness theorem \cite[Corollary 2]{ishii_moduli_subrings} implies that $\Ter^{\delta}_{A^+_{\vec{c}}}$ is geometrically connected for any choice of $\vec{c}$. In particular, the decomposition of Theorem \ref{thm:decomposition_into_territories_of_sings_no_conductance} is the decomposition of $\Ter^{\delta}_{A_{\vec{c}}}$ into its connected components. Ishii's proof works by finding a sequence of lines connecting any two points of the territory. These lines may pass through singularities with reduced conductance, so her result does \textit{not} imply connectedness of the spaces $\Tersing(g,\vec{c})$.

In fact, they need not be connected. Consider $\Tersing(3,(6)) = \Ter^{3,6}_{A_{(6)}}$. Computations in Macaulay2 show that $\Ter^3_{A_{(6)}}$ consists of two irreducible components, $Z_1$ and $Z_2$.
The component $Z_1$ generically parametrizes Gorenstein subalgebras of the form $\OO_S \cdot 1 + \OO_S (t^2 + at^3 + bt^5) + \OO_S(t^4 + 2at^5)$, while $Z_2$ generically parametrizes Gorenstein subalgebras of the form $\OO_S \cdot 1 + \OO_S (t^3 + at^5) + \OO_S (t^4 + bt^5)$. (We remark that in the classification of Gorenstein singularities of genus 3 by Battistella \cite{battistella_genus_3_sings}, the singularities in differential stratum $\mathcal{H}(4)^{ev}$ live in $Z_1$ while the singularities in $\mathcal{H}(4)^{odd}$ live in $Z_2$.) The closed points of the intersection $Z_1 \cap Z_2$ parametrize subalgebras with a basis of the form $1, at^3 + bt^4, t^5$, all of which have either conductance 4 or 5. Thus, passing to the open subset $\Ter^{3,6}_{A_{(6)}}$ removes this intersection, and we find that $Z_1$ and $Z_2$ restrict to distinct connected components.

Ishii also constructs a stratification of $(\Ter^\delta_{k[t]/t^{2\delta}})_{red}$ by associated numerical semigroup \cite[Section 3]{ishii_moduli_subrings}. In the case of $\Ter^{2}_{A_{(6)}}$, the strata and associated subalgebras are:
\begin{center}
\begin{tabular}{|c|c|c|} \hline
  Stratum & Subalgebras & Conductance \\ \hline
  $Z_{\langle 2,7 \rangle}$ & $k \cdot 1 + k (t^2 + at^3 + bt^5) + k (t^4 + 2at^5)$ & 6 \\
  $Z_{\langle 3,4 \rangle}$ & $k \cdot 1 + k(t^3 + at^5) + k (t^4 + bt^5)$ & 6\\
  $Z_{\langle 3,5,7,11 \rangle}$ & $k \cdot 1 + k(t^3 + at^4) + k t^5$ & 5\\
  $Z_{\langle 4,5,6,7 \rangle}$ & $k \cdot 1 + kt^4 + kt^5$ & 4 \\ \hline
\end{tabular}
\end{center}

This refines our stratification by conductance, as both $Z_{\langle 2,7 \rangle}$ and $Z_{\langle 3,4\rangle}$ consist of subalgebras of conductance 6, while $Z_{\langle 3,5,7,11 \rangle}$ consists of subalgebras of conductance 5, and $Z_{\langle 4,5,6,7 \rangle}$ the lone subalgebra of conductance $4$. Ishii does not construct a stratification corresponding to curve singularities with multiple branches.

This same example shows that a fixed territory of curve singularity may parametrize curve singularities with distinct topological types, since both the $E_6$ singularity $k\llbracket t^3, t^4 \rrbracket$ and the $A_6$ singularity $k \llbracket t^2, t^7 \rrbracket$ are among those parametrized by $\Ter^{3,6}_{k[t]/t^6}$.
\end{remark}

\subsection{Relationship to crimping spaces} \label{ssec:crimping}
Van der Wyck's thesis \cite{fred_thesis} constructs ``crimping spaces'' parametrizing the normalizations of a fixed curve singularity class.
These attempt to solve a similar problem to the territories of curve singularities constructed here. (In fact, our definition of $\EE_{g,n}$ is motivated by that of the stack of 1d algebras with resolution of \cite[Definition 1.15]{fred_thesis}.)
Accordingly, in this section, we explain the relationship between crimping spaces and territories of curve singularities.  Our main results do not depend on this section, so readers interested in those results may wish to skip ahead. For simplicity, we work over an algebraically closed field $k$. For this section only, we will also write $\AA^n$ for $\Spec k[x_1, \ldots, x_n]$ and $\GG_m$ for $\Spec k[t,t^{-1}]$.

Let $B$ be a 1-dimensional $k$-algebra singular at a unique point $p$. Let $A$ be its normalization. Write $J = \sqrt{\Cond_{A/B}}$, where the radical is taken as an ideal of $A$. Let $N$ be any positive integer such that $J^N \subseteq B$. (Equivalently by Lemma \ref{lem:affine_conductor_largest_ideal}, we may ask for $N$ such that $J^N \subseteq \Cond_{A/B}$.) We remark that since $k$ is a field the cokernel of $B/\cond_{A/B} \to \EEnd_{k}(\Delta)$ is automatically locally free, so
formation of the conductor commutes with base change.

\begin{definition} \label{def:crimping_scheme}
We denote by $\Aut^{[N]}_{k \to (A,J)}$ the group $k$-scheme representing the functor
\[
  \Aut^{[N]}_{k \to (A,J)}(S) =  \left\{ \phi : A/J^N|_S \to A/J^N|_S \, \, \middle| \, \, \begin{minipage}[c]{6cm} $\phi$ is an $\OO_S$-algebra isomorphism and $\phi(J/J^N|_S) = J/J^N|_S$ \end{minipage} \right\}.
\]
We denote by $\Aut^{[N]}_{k \to B \to (A,J)}$ the closed (but not necessarily normal) subgroup $k$-scheme of $\Aut^{[N]}_{k \to (A,J)}$ representing the functor
\[
  \Aut^{[N]}_{k \to B \to (A,J)}(S) = \{ \phi \in \Aut^{[N]}_{k \to (A,J)}(S) \mid \phi(B/J^N|_S) = B/J^N|_S \}.
\]
The \emphbf{crimping scheme} of the singularity $p$ is the fppf sheafification of the functor quotient \cite[Definition 1.71]{fred_thesis}
\[
  \Cr = \Aut^{[N]}_{k \to (A,J)} / \Aut^{[N]}_{k \to B \to (A,J)},
\]
which is a scheme by \cite[1.73]{fred_thesis}.
\end{definition}

The automorphism group $\Aut^{[N]}_{k \to (A,J)}$ is permitted to exchange branches of $p$ with distinct conductances; we will want to remove this possibility for our comparison to territories.

\begin{definition} \label{def:strict_crimping_scheme}
We denote by $\Aut^{[N]}_{k \to (A,J), 0}$ the closed subgroup scheme of $\Aut^{[N]}_{k \to (A,J)}$ cut out by the condition that $\phi((\cond_{A/B}/J^N)|_S) = (\cond_{A/B}/J^N)|_S$, and denote by $\Aut^{[N]}_{k \to B \to (A,J), 0}$ the subgroup $\Aut^{[N]}_{k \to (A,J), 0} \cap \Aut^{[N]}_{k \to B \to (A,J)}$. We define the \emphbf{strict crimping scheme} to be the scheme representing the quotient
\[
  \Cr_0 = \Aut^{[N]}_{k \to (A,J), 0} / \Aut^{[N]}_{k \to B \to (A,J), 0}.
\]
\end{definition}

We remark that since $\Aut^{[N]}_{k \to (A,J), 0}$ has finite index in $\Aut^{[N]}_{k \to (A,J)}$, $\Cr_0$ is a closed subscheme of $\Cr$, and $\Cr$ is a union of finitely many translates of $\Cr_0$ under $\Aut^{[N]}_{k \to (A,J)}$.

\begin{example}[Crimping schemes vs strict crimping schemes]
Suppose $A = k[t_1] \times k[t_2]$ and $B = \{ (f(t_1), g(t_2)) \mid f(0) = g(0), g'(0) = 0 \}$, i.e., $B$ has the singularity obtained by gluing
a cusp to a transverse smooth branch. In this case the conductor is $\cond_{A/B} = ((t_1,0), (0, t_2^2))$ (i.e., the branch conductances are $1$ and $2$), its radical is $J = ((t_1,0), (0, t_2))$, and we may take $N = 2$. We will see that the crimping scheme is $\ZZ / 2\ZZ$, while the strict crimping scheme is a point, $\Spec k$.

Observe that
\[
  \Aut^{[2]}_{k \to (A, J)} = \Aut_k(k[t_1]/(t_1^2) \times k[t_2]/(t_2^2)) \cong (\GG_m \times \GG_m) \rtimes (\ZZ/2\ZZ),
\]
where the factors of $\GG_m$ scale $t_1$ and $t_2$ and the non-trivial element of $\ZZ/2\ZZ$ interchanges the coordinates of $k[t_1]/(t_1^2) \times k[t_2]/(t_2^2)$. This non-trivial element neither preserves $B / J^2$ nor preserves $\cond_{A/B}/J^2$, so
\begin{align*}
  \Aut^{[2]}_{k \to (A,J), 0} &= \\
  \Aut^{[2]}_{k \to B \to (A,J)} &= \\
  \Aut^{[2]}_{k \to B \to (A,J), 0} &= \GG_m \times \GG_m.
\end{align*}
It follows that $\Cr \cong \ZZ / 2\ZZ$ and $\Cr_0 \cong \Spec k$, as claimed.
\end{example}

In order to set up a comparison with territories of singularities, let us suppose that $p$ has genus $g$ and branch conductances $c_1 \geq \cdots \geq c_m$. Write $A_{\vec{c},k}$ for $A_{\vec{c}} \otimes k$ and $A^+_{\vec{c}, k}$ for $A^+_{\vec{c}} \otimes k$. Identify $A/\cond_{A/B}$ with $A_{\vec{c}, k}$
and write $\pi : A/J^N \to A_{\vec{c},k}$ for the quotient map induced by the fact that $\cond_{A/B} \supseteq J^N$; its kernel
is $\cond_{A/B} / J^N = \cond_{(A/J^N)/(B/J^N)}$.

\begin{lemma} \label{lem:aut_unibranch_A_c}
For any integer $d \geq 2$, there is an isomorphism of schemes (not preserving the group structure)
\[
  \Aut_k(k[t]/(t^{d})) \longrightarrow \GG_m \times \AA^{d - 2}.
\]
given by taking $\phi : k[t]/t^d \to k[t]/t^d$ to the tuple $(a_1, \ldots, a_{d-1})$ such that
\[
  \phi(t) = a_1t + \cdots + a_{d-1}t^{d-1}.
\]

For $d = 1$, $\Aut_k(k[t]/t)$ is trivial.
\end{lemma}
\begin{proof}
Omitted.
\end{proof}

\begin{lemma} \label{lem:aut_multibranch_A_c}
For any non-negative integers $c_1 \geq \cdots \geq c_m$,
\[
  \Aut_k(A_{\vec{c},k}) \cong \prod_{i=1}^m \Aut(k[t_i]/(t_i^{c_i})) \rtimes \mathfrak{S}_{\vec{c}}
\]
where $\mathfrak{S}_{\vec{c}}$ is the subgroup of the symmetric group on $m$ letters determined by the condition $\sigma \in \mathfrak{S}_{\vec{c}}$ if and only if $c_{\sigma(i)} = c_i$ for all $i$. In particular,
\[
  \Aut^{[N]}_{k \to (A,J),0} \cong \prod_{i=1}^m \Aut(k[t_i]/(t_i^{N})) \rtimes \mathfrak{S}_{\vec{c}}
\]
\end{lemma}
\begin{proof}
Omitted.
\end{proof}

\begin{lemma} \label{lem:auts_to_auts_mod_conductor}
We have the following:
\begin{enumerate}
  \item[(A)] There is a natural surjective morphism of group schemes
    \[
      \xi : \Aut^{[N]}_{k \to (A,J), 0} \longrightarrow \Aut_k(A_{\vec{c},k})
    \]
    given by taking an isomorphism $\phi : A/J^N \to A/J^N$ to the map $\xi(\phi) : A^+_{\vec{c}} \to A^+_{\vec{c}}$ defined by $\pi(a) \mapsto \pi(\phi(a))$.
  \item[(B)] There is a natural action of $\Aut_k(A_{\vec{c},k})$ on $\Tersing(g, \vec{c}) \otimes k$ given on $S$-points by
    $\phi \cdot C = \phi(C)$.
  \item[(C)] The subgroup $\Aut^{[N]}_{k \to B \to (A,J),0}$ of $\Aut^{[N]}_{k \to (A,J), 0}$ is $\xi^{-1}(\Stab_{\Aut_k(A_{\vec{c},k})}(B / \cond_{A/B}))$.
\end{enumerate}
\end{lemma}

\begin{proof}
Claim (C) is clear once (A) and (B) are established. For simplicity, we will give the proof for (A) on $k$-points. The generalization to
$S$-points is immediate.

To show that $\xi$ is well-defined, we must show that
\begin{enumerate}
  \item $\pi(a) = \pi(b) \implies \pi(\phi(a)) = \pi(\phi(b))$;
  \item $\xi(\phi)$ preserves sums, products, and 1;
  \item $\xi(\phi)$ is bijective.
\end{enumerate}

Item (i) holds since $\phi(\cond_{(A/J^N)/(B/J^N)}) = \cond_{(A/J^N)/(B/J^N)}$. Item (ii) clearly holds. Since $\phi$ is surjective, $\xi(\phi)$ is surjective. The kernel of $\xi(\phi)$ is clearly trivial, so $\xi(\phi)$
is an automorphism.

To see that $\xi$ is surjective, observe that we may identify $A/J^N$ with $A_{\vec{N},k}$ where $\vec{N}$ is the $m$-vector $(N, \ldots, N)$ and we may identify $\pi$ with the natural quotient $A_{\vec{N},k} \to A_{\vec{c},k}$.
Then in light of Lemmas \ref{lem:aut_unibranch_A_c} and \ref{lem:aut_multibranch_A_c},
we see that the map $\xi$ is simply given by projecting away some factors of $\AA^1$ or $\GG_m$, so $\xi$ is surjective.

To show (B), it suffices to check that $\xi(\phi)$ takes $A^+_{\vec{c},k}$ into itself.
Write $\mathrm{ev}_j$ for the morphism $\mathrm{ev}_j : A_{\vec{c},k} \to k$ given by taking $(f_i(t_i)) \mapsto f_j(0)$. These are
the quotient maps associated to the maximal ideals of $A_{\vec{c},k}$, so the automorphism $\xi(\phi)$ induces a permutation of
the evaluation maps, i.e., there is a permutation $\sigma \in S_m$ such that $\xi(\phi) \circ \mathrm{ev}_j = \mathrm{ev}_{\sigma(j)}$ for all $j$.
Since $A^+_{\vec{c},k}$ is the subset of $A_{\vec{c},k}$ cut out by $\mathrm{ev}_{i} = \mathrm{ev}_{j}$ for all $i, j$, we conclude $A^+_{\vec{c},k}$
is taken into itself. Then (B) follows.
\end{proof}

Next, we conclude that strict crimping spaces are the orbits in $\Tersing(g,\vec{c})$ under the action of $\Aut_k(A_{\vec{c}})$.

\begin{theorem} \label{thm:crimp_vs_ter}
There is a locally closed immersion
\[
  f : \Cr_0 \to \Tersing(g, \vec{c}) \otimes k
\]
identifying the strict crimping space of $p$ with the orbit of $B/\cond_{A/B}$ under the action of $\Aut_k(A_{\vec{c},k})$ on $\Tersing(g, \vec{c}).$
\end{theorem}
\begin{proof}
By Lemma \ref{lem:auts_to_auts_mod_conductor},
\begin{align*}
  \Cr_0 &= \Aut^{[N]}_{k \to (A,J),0} / \Aut^{[N]}_{k \to B \to (A,J), 0} \\
        &\cong \Aut_k(A_{\vec{c},k}) / \Stab(B/\cond_{A/B}).
\end{align*}
As is usual for orbits, the natural map $f : \Cr_0 \to \Tersing(g,\vec{c})$ is induced by taking $\phi \in \Aut_k(A_{\vec{c},k})$ to $\phi(B/\cond_{A/B})$. This is a locally closed immersion by \cite[Proposition 1.65]{milne_algebraic_groups}.
\end{proof}

\begin{example} \label{ex:crimping_all_2s}
Let $m$ be an integer such that $m \geq 1$ and let $g$ be an integer such that $1 \leq g \leq m$. Write $\vec{2}$ for the $m$-vector $(2,\ldots,2)$. We will compute $\Tersing(g, \vec{2})$ and consider the embedded strict crimping spaces
as orbits under $\Aut_k(A_{\vec{2},k})$.

Observe that any corank $g$ subalgebra of $A^+_{\vec{2},k}$ is of the form $k(1,\ldots, 1) + V$ where $V$ is a codimension $g$ subspace of $\mm = (t_1, \ldots, t_m)$. On the other hand, any vector subspace of $\mm$ in $A^+_{\vec{2},k}$ is closed under multiplication since $\mm$ is square zero. Accordingly,
\[
  \Ter^{g}_{A^+_{\vec{2},k}} \cong \Grass_{k}(m - g, \mm) \cong \Grass_{k}(m - g, m),
\]
where a codimension-$g$ subspace $V \subseteq \mm$ corresponds to the algebra $k(1,\ldots, 1) + V \subseteq A^+_{\vec{c},k}$, which corresponds in turn to the curve singularity modelled by
\[
  k(1,\ldots,1) + V + (t_1^2,\ldots, t_m^2) \subseteq k[t_1] \times \cdots k[t_m].
\]
A local computation shows that the locus $\Tersing(g, \vec{2}) \subseteq \Ter^g_{A^+_{\vec{2}, k}}$ with the desired conductance corresponds to the open locus in $\Grass_{k}(m - g, m)$ parametrizing the $(m - g)$-planes
not containing any coordinate axis. (The locus of planes that do contain a coordinate axis has codimension $g$.) This removes the curve singularities that decompose into a transverse union of a smooth branch with another singularity.

The group $\Aut_k(A_{\vec{2},k})$ is isomorphic to $(\GG_m)^{m} \rtimes S_m$. The action by the subgroup $(\GG_m)^{m}$ is by scaling the coordinates, while the action of the subgroup $S_m$ is by permuting the coordinates.

For $g = 1$, the action of $(\GG_m)^m$ on
\[
  \Tersing(1, \vec{2}) \cong \PP^{m-1} - V(x_0) - \cdots - V(x_{m-1}) \cong \GG_m^{m-1}
\]
is induced by the usual torus action on $\Grass(m - 1, m) \cong \PP^{m-1}$, which is of course transitive. We see that there is one isomorphism class of Gorenstein, genus 1 singularity for each number of branches. This agrees with Smyth's classification of Gorenstein, genus 1 singularities \cite[Appendix A]{smyth_mstable}. In the case $m = 1$, the territory possesses one point corresponding to the subalgebra $k \cdot 1 \subseteq k[t]/t^2$ corresponding in turn to a cusp $k[t^2,t^3]$. For $m = 2$, corresponding to $u \in \GG_m$ we have subalgebras $k \cdot 1 + k(t_1 + u t_2) \subseteq A_{(2,2),k}$, all corresponding to tacnodes. For $m \geq 3$, the point $(u_2, \ldots, u_m) \in \GG_m^{m-1}$ corresponds to the subalgebra $k \cdot 1 + k(t_1 + u_2 t_2 + \cdots + u_m t_m) \subseteq A_{\vec{2},k}$, all corresponding to the elliptic $m$-fold point, isomorphic to $m$ general lines through the origin of $\AA^{m-1}$.

For larger $g$, $m$, we can compute the dimension of orbits and stabilizers by concentrating on the action by $(\GG_m)^m,$ since $S_m$ is discrete.

Concretely, consider the standard affine patch $\Spec k[a_{i,j}]_{1 \leq i \leq g, 1 \leq j \leq m - g}$ of the Grassmannian, where a point $(a_{i,j})$ corresponds to the subspace with basis given by the columns of the matrix
\[
  \begin{bmatrix}
      \mathbb{I}_{m - g} \\
      [ a_{i,j}]_{1 \leq i \leq g, 1 \leq j \leq m - g}.
  \end{bmatrix}
\]
This patch is $(\GG_m)^m$-invariant, and the action of $(\GG_m)^{m}$ is given by
\[
  (u_l)_{l = 1}^m \cdot (a_{i,j})_{1 \leq i \leq g, 1 \leq j \leq m - g} = (u_{g - m + i}u_j^{-1}a_{i,j}).
\]
That is, the first $m - g$ coordinates of $(\GG_m)^m$ act with weight $-1$ on columns, and the remaining $g$ coordinates of $(\GG_m)^m$ act with weight $1$ on rows.

Observe that for any $g$ and $m$, $\Aut_k(A_{\vec{2},k})$ has dimension $m$, while $\Tersing(g, \vec{2})$ has dimension $g(m - g)$. Since the stabilizer of any point always contains the diagonal subgroup of $(\GG_m)^m$, orbits have dimension at most $m - 1$.
Already for $g = 2$ and $m = 4$, since $m - 1 = 3$ and $g(m-g) = 4$, there are no strict crimping spaces open in $\Tersing(2, (2,2,2,2)) \otimes k.$ This shows that strict crimping spaces, while locally closed, do not yield a locally finite stratification of territories.

The singularities considered here for $g \geq 2$ are never Gorenstein, so do not appear in Battistella's classifications of Gorenstein singularities for genus $2, 3$ \cite[Section 2]{battistella_m_stable}, \cite{battistella_genus_3_sings}, and are never planar.

We remark that the orbits of $(\GG_m)^m$ of $\Grass(m - g, m)$ and the associated quotient have been studied classically and have connections to hypergeometric functions, $K$-theory, combinatorial constructions of characteristic classes, and equivalence classes of configurations of points in projective space (see for example \cite{kapranov_chow_quotients,gelfand_goresky_macpherson_serganova,gelfand_macpherson,gelfand_serganova}). In particular, the ``Chow quotient'' $\Grass(2, m) /\!/ (\GG_m)^m$ is isomorphic to $\ol{M}_{0,m}$ \cite[Theorem 4.1.8]{kapranov_chow_quotients}.
\end{example}

\section{Territories of schemes concentrated on subschemes}
\label{sec:territories_schemes}

In this section, we extend the notion of territories of sheaves of algebras to the case of schemes, anticipating our application to equinormalized families of curves. We give an explicit moduli functor, then prove its representability by a territory of a finite locally free algebra. Generalizations to representable morphisms of algebraic stacks follow easily.

\begin{definition} \label{def:global_territory}
Suppose that $\tilde{\pi} : \tilde{X} \to S$ is a locally quasiprojective family of schemes. Let $\iota : Z \to \tilde{X}$ be a closed subscheme such that $\tilde{\pi} \circ \iota: Z \to S$ is finite locally free of fixed rank. Write $\mathscr{I}_{Z/\tilde{X}}$ for the sheaf of ideals of $Z$ in $\tilde{X}$. Define a functor $F^\delta_{Z/\tilde{X}/S}$ that assigns to an $S$-scheme $T$ the set of isomorphism classes of diagrams
\[
  \begin{tikzcd}
    \tilde{X}_T \ar[rr, "\nu"] \ar[dr, "\tilde{\pi}_T"'] &  & X \ar[dl, "\pi"] \\
     & T
  \end{tikzcd}
\]
such that
\begin{enumerate}
  \item $\nu$ is a finite morphism of schemes,
  \item the associated map of sheaves $\nu^\sharp : \OO_{X} \to \nu_*\OO_{\tilde{X}_T}$ is injective,
  \item $\nu^\sharp(\OO_{X})$ contains $\nu_*\mathscr{I}_{Z_T/X}$,
  \item letting $\Delta = (\nu_*\OO_{\tilde{X}_T}) / \nu^\sharp(\OO_{X})$, the $\OO_T$-module $\pi_*\Delta$
  is locally free of rank $\delta$.
\end{enumerate}
Restriction morphisms $F^\delta_{Z/\tilde{X}/S}(T \to S) \to F^\delta_{Z/\tilde{X}/S}(U \to S)$ are given by pullback.
\end{definition}

\begin{remark}
If $f : T \to S$ is an $S$-scheme, $F^\delta_{Z/\tilde{X}/S} \times_S T \cong F^\delta_{Z_T/\tilde{X}_T/T}$.
\end{remark}

It follows from the definition that the family $X \to T$ is obtained by modifying the scheme $\tilde{X}$ only within $Z$. Intuitively, $\tilde{X} \to X$ may crimp jets or glue points of $Z$.

\begin{lemma} \label{lem:the_nature_of_nu}
Let $\nu : \tilde{X}_T \to X$ be a $T$-morphism as in Definition \ref{def:global_territory}. Then
\begin{enumerate}
    \item $\nu$ is surjective and scheme-theoretically surjective;
    \item $\nu$ restricts to an isomorphism $\tilde{X}_T - Z_T \to X - \nu(Z_T)$; and
    \item $\nu^{-1}(\nu(Z_T)) = Z_T$ as a closed subscheme of $\tilde{X}_T$.
\end{enumerate}
\end{lemma}
\begin{proof}
Scheme-theoretic surjectivity of $\nu$ follows from part (ii) of Definition \ref{def:global_territory}. Since $\nu$ is finite, $\nu$ is also surjective. This proves (i).

Next we prove (iii). Let us assume $\nu^\sharp$ is an inclusion to ease the notation. Since $\nu$ is proper, $\nu(Z_T)$ is closed in $X$. Since $\nu_*\mathscr{I}_{Z_T/\tilde{X}_T} \subseteq \OO_X$, the ideal sheaf of $\nu(Z_T)$ in $X$ is $\nu_*(\mathscr{I}_{Z_T/\tilde{X}_T})$. Then the ideal sheaf of $\nu^{-1}(\nu(Z_T))$ is the image of the map $\nu^*(\nu_*\mathscr{I}_{Z_T/\tilde{X}_T}) \to \OO_{\tilde{X}_T}$ obtained by pulling back the inclusion $\nu_*\mathscr{I}_{Z_T/\tilde{X}_T} \to \OO_X$.

Let $U \subseteq X$ be an open affine subscheme of $X$. Write $B = \OO_{X}(U), A = \OO_{\tilde{X}_T}(\nu^{-1}(U))$, and $I = \mathscr{I}_{Z_T/\tilde{X}_T}(U)$. On $U$, the map $\nu^*(\nu_*\mathscr{I}_{Z_T/\tilde{X}_T}) \to \OO_{\tilde{X}_T}$ becomes the morphism $I \otimes_B A \to A$ given by $i \otimes a \mapsto ia$. Since $I$ is an ideal of $A$, the image of $I \otimes_B A \to A$ is $I$. It follows $\nu^{-1}(\nu(Z_T)) = Z_T.$

Finally we prove (ii). Consider the sequence
\begin{equation} \label{eq:fundamental_sequence}
   0 \to \OO_X \to \nu_*\OO_{\tilde{X}_T} \to \Delta \to 0
\end{equation}
Let $V = X - \nu(Z_T)$ and $\tilde{V} = \nu^{-1}(V) = \tilde{X}_T - \nu^{-1}(\nu(Z_T)) = \tilde{X}_T - Z_T$. Tensoring with $\OO_V$,
\[
  0 \to \OO_V \to \nu_*\OO_{\tilde{V}} \to \Delta \otimes \OO_V \to 0
\]
is exact. By part (iii) of Definition \ref{def:global_territory}, $\Delta$ is supported on $\nu(Z_T)$, so $\Delta \otimes \OO_V = 0$. Then $\OO_V \to \nu_*\OO_{\tilde{V}}$
is an isomorphism. Since $\nu$ is affine, we conclude that (ii) holds.
\end{proof}

We do not explicitly impose any hypotheses on $X \to T$ in Definition \ref{def:global_territory}, yet $X$ generally has nice properties when $\tilde{X}$ has nice properties.

\begin{proposition} \label{prop:X_to_T_is_nice}
Let $\nu : \tilde{X}_T \to X$ be a $T$-morphism as in Definition \ref{def:global_territory}. Then:
\begin{enumerate}
  \item the map $\pi : X \to T$ is separated;
  \item if $\tilde{\pi} : \tilde{X} \to S$ is proper, then $\pi : X \to T$ is proper;
  \item if $\tilde{\pi} : \tilde{X} \to S$ is flat, then $\pi : X \to T$ is flat.
\end{enumerate}
\end{proposition}
\begin{proof}
First we show that $\pi$ is separated. Consider the diagram
\[
\begin{tikzcd}
  \tilde{X}_T \ar[d, "\Delta_{\tilde{X}_T/T}"] \ar[r, "\nu"] & X \ar[d, "\Delta_{X/T}"] \\
  \tilde{X}_T \times_T \tilde{X}_T \ar[r, "\nu \times \nu"] & X \times_T X.
\end{tikzcd}
\]
The top and bottom arrows are finite and the left arrow is a closed embedding. Following the arrows, $\tilde{X}_T \to \tilde{X}_T \times_T \tilde{X}_T \to X \times_T X$, we see that the image of $\tilde{X}_T$ is closed in $X \times_T X$.
Since $\nu$ is surjective, $\nu(\tilde{X}_T) = X$. It follows that the image of $X$ in $X \times_T X$ is also closed, whence $X$ is separated over $T$.

Now, since $\pi : X \to T$ is separated, if $\tilde{\pi} : \tilde{X}_T \to T$ is proper, then the image $\nu(\tilde{X}_T) = X \to T$ is also proper. This gives us (ii).

Finally, we show $\pi : X \to T$ is flat. Part (ii) of Lemma \ref{lem:the_nature_of_nu} implies that $X \to T$ is flat away from $\nu(Z_T)$, so it suffices to show flatness on an open neighborhood of $\nu(Z_T)$. Since flatness is \'etale local we may assume $S$ and $T$ are affine and $\tilde{\pi}$ is projective. Again passing to an \'etale local cover if necessary, since $Z$ is finite and $\tilde{X}$ is projective, we can find an affine open subset $\tilde{U} \subseteq \tilde{X}$ such that $Z \subseteq \tilde{U}$ and $\tilde{\pi}(\tilde{U}) = S$ by taking the complement of a suitable hyperplane. Let $U = \nu(\tilde{U} \times T)$. Since $\nu$ is finite surjective and $\tilde{U} \times T$ is affine, $U$ is affine \cite[Tag 01ZT]{stacks-project}. Now, $U$ contains the support of $\Delta$ and $R^1\pi_*\OO_U = 0$, since $U$ is affine over the affine $T$. Applying $\pi_*(-|_U)$ to sequence \eqref{eq:fundamental_sequence}, we have
\[
  0 \to \pi_*\OO_U \to \tilde{\pi}_*\OO_{\tilde{U}\times T} \to \pi_*\Delta \to 0.
\]
Since $\pi_*\Delta$ is finite locally free, it is flat. The middle $\OO_T$-module $\tilde{\pi}_*\OO_{\tilde{U}\times T}$ is flat since $\tilde{X}_T$ is flat over $T$ and $\tilde{U} \times T$ is affine over $T$.
Since the middle and last terms are flat $\OO_T$-modules, $\pi_*\OO_U$ is flat too. Therefore, $X \to T$ is flat.
\end{proof}

\begin{theorem} \label{thm:territory_of_scheme_representable}
With notation as above, the functor $F^\delta_{Z/\tilde{X}/S}$ is representable by the scheme $\Ter^{\delta}_{\tilde{\pi}_*\OO_Z}$.
\end{theorem}
\begin{proof}
The point is the same as Theorem \ref{thm:contains_I_vs_subalg_of_A/I}, but with schemes. We will show that there is a natural isomorphism $F^\delta_{Z/\tilde{X}/S} \to F^\delta_{\tilde{\pi}_*\OO_Z}$.

Let us describe the map $F^\delta_{Z/\tilde{X}/S}(T \to S) \to F^\delta_{\tilde{\pi}_*\OO_Z}(T \to S)$. Since $F^\delta_{Z/\tilde{X}/S} \times_S T \cong F^\delta_{Z_T/\tilde{X}_T/T}$ for any $T \to S$, we may assume for ease of notation that $T = S$.

Let
\[
  \begin{tikzcd}
    \tilde{X} \ar[rr, "\nu"] \ar[dr, "\tilde{\pi}"'] &  & X \ar[dl, "\pi"] \\
     & S
  \end{tikzcd}
\]
be an element of $F^\delta_{Z/\tilde{X}/S}(S)$. Without loss of generality, we may take $\nu^\sharp$ to be an inclusion map, so that $\OO_{X} \subseteq \nu_*\OO_{\tilde{X}}$.
By the universal property of quotients, $\OO_{\nu(Z)} = \OO_{X} / \nu_*\mathscr{I}_{Z/\tilde{X}}$ is naturally a sub-sheaf of algebras of $\nu_*\OO_Z$ and
\[
  \nu_*\OO_{\tilde{X}}/\OO_{X} \cong \nu_*\OO_{Z} / \OO_{\nu(Z)}.
\]
Then since $\pi_*(\nu_*\OO_{\tilde{X}}/\OO_{X})$ is a locally free $\OO_S$-module of rank $\delta$, so is $\pi_*(\nu_*\OO_Z / \OO_{\nu(Z)})$. Now, $\nu_*\OO_Z$ and $\OO_{\nu(Z)}$ are both supported on $\nu(Z)$. Since $\nu$ is finite and $X$ is separated,
$\nu(Z)$ is proper over $S$. Since $\nu(Z)$ is proper and quasi-finite over $S$, it is finite, hence affine. Now $\pi_*{\OO_{\nu(Z)}}$ is a subsheaf of $\OO_S$-algebras of $\tilde{\pi}_*\OO_Z$ and, since $R^1\pi_*(\OO_{\nu(Z)}) = 0$,
\[
  \pi_*(\nu_*\OO_Z / \OO_{\nu(Z)}) \cong \tilde{\pi}_*\OO_Z / \pi_*\OO_{\nu(Z)}
\]
is a locally free $\OO_S$-module of rank $\delta$. Therefore,
\[
  \begin{tikzcd}
    \tilde{X} \ar[rr, "\nu"] \ar[dr, "\tilde{\pi}"'] &  & X \ar[dl, "\pi"] \\
     & S
  \end{tikzcd}
  \quad \mapsto
  \pi_*(\OO_{\nu(Z)})
\]
is a well-defined map $F^\delta_{Z/\tilde{X}/S}(S) \to F^\delta_{\tilde{\pi}_*\OO_Z}$. This map commutes with base change $T \to S$ since the maps $\nu$ and $\nu(Z) \to S$ are affine \cite[Tag 02KG]{stacks-project}.

Next, we construct an inverse map $F^\delta_{\tilde{\pi}_*\OO_Z}(S) \overset{\sim}{\to} F^\delta_{Z/\tilde{X}/S}(S)$. Suppose given a subsheaf of $\OO_S$-algebras $\mathscr{B}$ of $\tilde{\pi}_*\OO_Z$ of corank $\delta$.
Since $Z \to S$ is finite and $\tilde{X} \to S$ is locally quasiprojective, every fiber of $Z \to \Spec_{S}(\mathscr{B})$ admits an affine open neighborhood in $\tilde{X}$ by taking the complement of an appropriate hyperplane in an ambient projective space.
By \cite[Tag 0E25]{stacks-project}, a pushout $X$ exists:
\[
  \begin{tikzcd}
    Z \ar[d, "f"] \ar[r, "j"] & \tilde{X} \ar[d,"\nu"] \\
    \Spec_{S}(\mathscr{B}) \ar[r, "i"] & X
  \end{tikzcd}
\]
$\nu : \tilde{X} \to X$ is affine, and $\OO_{X} = i_*\OO_{\Spec \mathscr{B}} \times_{c_*\OO_Z} \nu_*\OO_{\tilde{X}}$ satisfies the desired properties, where $c = i \circ f = \nu \circ j.$
\end{proof}

\begin{definition} \label{def:territory_of_scheme}
Accordingly, we will write $\Ter^{\delta}_{Z/\tilde{X}/S}$ for the scheme, unique up to unique isomorphism, representing the functor $F^\delta_{Z/\tilde{X}/S}$. We call this the $\delta$-territory of $\tilde{X}$ concentrated on $Z$.
\end{definition}

\begin{corollary} \label{cor:global_territory_finite_presentation}
Let $\nu : \tilde{X}_T \to X$ be a $T$-morphism as in Definition \ref{def:global_territory}. If $\tilde{\pi} : \tilde{X} \to S$ is proper, then $\pi : X \to T$ is of finite presentation.
\end{corollary}
\begin{proof}
To avoid overloading subscripts, write $\mathcal{T} = \Ter^\delta_{Z/\tilde{X}/S}$. Let
\[
\begin{tikzcd}
  \tilde{X}_{\mathcal{T}} \ar[rr, "\nu_{\mathcal{T}}"] \ar[rd, "\tilde{\pi}"] & & \mathcal{X} \ar[dl, "\pi_{\mathcal{T}}"] \\
  & \mathcal{T}
\end{tikzcd}
\]
be the universal element. Since $\tilde{\pi} : \tilde{X} \to S$ is proper, $\pi_{\mathcal{T}} : \mathcal{X} \to \mathcal{T}$ is proper. Since $\mathcal{T}$ is locally projective, it is locally noetherian.
Since $\pi_{\mathcal{T}} : \mathcal{X} \to \mathcal{T}$ is of finite type and separated over a locally noetherian base, it is of finite presentation. Since $\pi : X \to T$ is pulled back from $\pi_{\mathcal{T}}$,
the conclusion follows.
\end{proof}

By faithfully flat descent, we obtain the following corollaries.

\begin{corollary}
If $\tilde{X} \to S$ is instead assumed to be an \'etale locally quasi-projective family of algebraic spaces (for example, an arbitrary family of proper curves), then the same functor $F^\delta_{Z/\tilde{X}/S}$ is represented by an algebraic space.
\end{corollary}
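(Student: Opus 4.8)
The plan is to reduce to the case of schemes, Theorem \ref{thm:territory_of_scheme_representable}, by \'etale descent, so that the only genuinely new input is a sheaf-theoretic verification. By the meaning of an \'etale locally quasi-projective family of algebraic spaces, there is an \'etale surjection $u : S' \to S$ from a scheme $S'$ such that $\tilde{X}' \coloneqq \tilde{X} \times_S S'$ is a quasi-projective scheme over $S'$. Since closed immersions and finite locally free morphisms are stable under base change, $Z' \coloneqq Z \times_S S'$ is a closed subscheme of $\tilde{X}'$ with $Z' \to S'$ finite locally free of the same rank, so Theorem \ref{thm:territory_of_scheme_representable} applies over $S'$ and represents $F^{\delta}_{Z'/\tilde{X}'/S'}$ by the $S'$-scheme $\Ter^{\delta}_{Z'/\tilde{X}'/S'}$. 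Combined with the base change isomorphism $F^{\delta}_{Z/\tilde{X}/S} \times_S T \cong F^{\delta}_{Z_T/\tilde{X}_T/T}$ from the remark following the definition of the functor, this shows that the restriction of $F \coloneqq F^{\delta}_{Z/\tilde{X}/S}$ to $\Sch/S'$ is represented by a scheme.

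Next I would check that $F$ is a sheaf for the \'etale topology on $\Sch/S$, where now the target $X$ of $\nu$ is allowed to be an algebraic space affine over $\tilde{X}_T$ --- this is forced as soon as $\tilde{X}$ is not a scheme. Given an \'etale cover $\{T_i \to T\}$ of an $S$-scheme and objects $(\nu_i : \tilde{X}_{T_i} \to X_i) \in F(T_i)$ carrying a descent datum, the algebraic spaces $X_i$ glue to an algebraic space $X \to T$ and the morphisms $\nu_i$ glue to $\nu : \tilde{X}_T \to X$, since algebraic spaces and their morphisms satisfy \'etale descent. Each condition in the definition of $F$ is \'etale-local on $T$: in particular $\Delta$ is supported on $\nu(Z_T)$, which is affine over $T$ because this holds over each $T_i$, so $\pi_*\Delta$ commutes with flat base change and ``locally free of rank $\delta$'' is detected after the \'etale base change $\{T_i \to T\}$. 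Hence the glued object lies in $F(T)$, and uniquely, so $F$ is an \'etale sheaf.

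Now I would combine the two steps. The representable functor $F \times_S S' \cong \Ter^{\delta}_{Z'/\tilde{X}'/S'}$ over $\Sch/S'$, together with the canonical identifications of its two pullbacks to $S' \times_S S'$ (both equal to $F \times_S (S' \times_S S')$), forms a descent datum relative to the \'etale cover $u$. Since algebraic spaces over $S$ form a stack for the \'etale topology, this descent datum is effective and produces an algebraic space $W \to S$. As $F$ is an \'etale sheaf agreeing with $W$ after pullback along the surjective $u$, the two are isomorphic, so $F$ is represented by the algebraic space $W$. (In this generality $\tilde{\pi}_*\OO_Z$ is still a finite locally free sheaf of $\OO_S$-algebras, so in fact $W \cong \Ter^{\delta}_S(\tilde{\pi}_*\OO_Z)$ is a scheme by Theorem \ref{thm:territory_representable}; only the algebraic space statement is claimed here.)

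The real content is the sheaf property in the second paragraph, and the only obstacle there is conceptual rather than technical: one must accept that the contracted family $X$ is an algebraic space, after which everything reduces to standard \'etale descent for algebraic spaces, for their morphisms, and for quasi-coherent modules. A less efficient alternative would be to avoid descent by rerunning the proof of Theorem \ref{thm:territory_of_scheme_representable} verbatim, replacing the pushout of schemes along the closed immersion $Z \hookrightarrow \tilde{X}$ and the finite morphism $Z \to \SSpec{S}{\mathscr{B}}$ by the corresponding pushout of algebraic spaces (which exists under exactly these hypotheses); this would even yield representability by the scheme $\Ter^{\delta}_S(\tilde{\pi}_*\OO_Z)$, but the descent route is shorter.
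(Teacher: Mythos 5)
Your proof is correct and follows essentially the same route as the paper, which disposes of this corollary with a one-line appeal to faithfully flat descent: you simply spell out the étale descent argument (reduction to the scheme case of Theorem \ref{thm:territory_of_scheme_representable} over a cover $S' \to S$, the étale sheaf property of the functor with algebraic-space targets, and effectivity of descent for algebraic spaces). The extra detail, including the observation that the target $X$ is forced to be an algebraic space and the parenthetical that the representing object is in fact the scheme $\Ter^{\delta}_S(\tilde{\pi}_*\OO_Z)$, is consistent with the paper's intent.
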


\begin{corollary} \label{cor:territory_of_stack}
If $\mathcal{\tilde{X}} \to \mathcal{S}$ is an \'etale locally quasi-projective, representable morphism of algebraic stacks, then the category fibered
in groupoids $\mathcal{F}^{\delta}_{\calZ/\mathcal{\tilde{X}}/\mathcal{S}}$ whose fiber over $T$ is the category of pairs
\[
  (f : T \to \mathcal{S}, \xi)
\]
where $\xi \in \Ter^{\delta}_{\calZ|_{T}/\mathcal{\tilde{X}}|_T/T}(T)$, is represented by an algebraic stack $\Ter^{\delta}_{\calZ/\mathcal{\tilde{X}}/\mathcal{S}}$ with a morphism $\Ter^{\delta}_{\calZ/\mathcal{\tilde{X}}/\mathcal{S}} \to \mathcal{S}$
representable by algebraic spaces.

Moreover, the conclusions of Proposition \ref{prop:X_to_T_is_nice} and Corollary \ref{cor:global_territory_finite_presentation} hold in this generality.
\end{corollary}

Similarly to the case of algebras, we have representing stacks $\Ter^{\delta,c}_{\calZ/\mathcal{\tilde{X}}/\mathcal{S}}$ in which we additionally impose the condition that the cokernel of $\pi_*\OO_{\nu(\calZ)} \to \EEnd_{\mathcal{S}}(\pi_*\Delta)$ is locally free of rank $\delta^2 - c + \delta$. Theorem \ref{thm:delta_prime_stratification_affine} yields the following corollary.

\begin{corollary} \label{cor:stacky_conductor_stratification}
The spaces $\Ter^{\delta,c}_{\calZ/\mathcal{\tilde{X}}/\mathcal{S}}$ are disjoint locally closed substacks of $\Ter^\delta_{\calZ/\mathcal{\tilde{X}}/\mathcal{S}}$ with union $\Ter^\delta_{\calZ/\mathcal{\tilde{X}}/\mathcal{S}}$.
\end{corollary}

\section{Equinormalized curves of fixed conductance}
\label{sec:equinormalized_curves}

In this section, we define locally closed substacks $\EE^{\delta,c}_{g,n}$ coarsely stratifying $\EE_{g,n}$ and give an explicit construction of each $\EE^{\delta,c}_{g,n}$ as an algebraic stack in terms of territories.

\subsection{Definitions}

We begin by translating conductors and related notions to the setting of families of equinormalized curves.

\begin{definition}
Given a family
\[
\begin{tikzcd}
  \tilde{C} \ar[dr, "\tilde{\pi}"] \ar[rr, "\nu"] & & C \ar[dl, "\pi"] \\
  & S &
\end{tikzcd}
\]
of equinormalized curves over $S$, we write $\Delta_{\tilde{C}/C}$ for the $\OO_C$-module $\nu_*\OO_{\tilde{C}} / \OO_C$, and write $\delta_{\tilde{C}/C}$ for the rank of $\pi_*\Delta_{\tilde{C}/C}$, which we call the \emphbf{total $\delta$-invariant} of $\nu$.
The \emphbf{conductor} of $\nu$ refers either to the quasicoherent sheaf of ideals
\[
  \cond_{\tilde{C}/C} = \AAnn_{\OO_C}(\Delta)
\]
of $\OO_C$ or the quasicoherent sheaf of ideals of $\OO_{\tilde{C}}$
\[
  \Cond_{\tilde{C}/C} = \nu^{-1}\cond_{\tilde{C}/C} \otimes_{\nu^{-1}\nu_*\OO_{\tilde{C}}} \OO_{\tilde{C}}.
\]

We will write $\Delta'_{\tilde{C}/C}$ for the $\OO_C$-module $\OO_C / \cond_{\tilde{C}/C}$. In the case that $\Delta'_{\tilde{C}/C}$ is locally free of finite rank, we write $\delta'$ for its rank, which we call the \emphbf{total $\delta'$-invariant} of $\nu$. If $\OO_{\tilde{C}} / \Cond_{\tilde{C}/C}$ is locally free of finite rank, we call its rank $c$ the \emphbf{total conductance} of $\nu$.

If the family $\nu : \tilde{C} \to C$ is clear from context, we will omit the subscript $\tilde{C}/C$.
\end{definition}

Exactly as in the affine case, we have the following two properties.

\begin{lemma} \label{lem:conductor_largest_ideal}
Let $\nu : \tilde{C} \to C$ be a family of equinormalized curves over $S$. The conductor $\Cond_{\tilde{C}/C}$ is characterized by the fact that it is the largest quasicoherent ideal sheaf of $\OO_{\tilde{C}}$ such that $\nu_*\Cond_{\tilde{C}/C}$ factors through $\OO_C.$
\end{lemma}

\begin{lemma}
Let $\nu : \tilde{C} \to C$ be a family of equinormalized curves over $S$, and write $\pi : C \to S$ for the map to $S$. The conductor $\cond_{\tilde{C}/C}$ is the kernel of the
morphism
\[
  \OO_C \to \EEnd_{\OO_C}(\Delta)
\]
\end{lemma}

We now give a careful definition of $\EE^{\delta, c}_{g,n}$.

\begin{definition} \label{def:E_gn_delta_c}
Let $g, n, \delta, c$ be non-negative integers such that $\delta \leq c \leq 2\delta$. Given a scheme $S$, a \emphbf{family of $n$-pointed equinormalized curves of genus $g$ over $S$ with invariants $\delta, c$}
consists of a family $(\nu : \tilde{C} \to C, q_1, \ldots, q_n) \in \EE_{g,n}(S)$ such that
\begin{enumerate}
  \item $\pi_*\Delta$ is locally free of rank $\delta$ on $S$
  \item the cokernel of the composite
  \[
    \pi_*(\Delta') \to \pi_*\EEnd_{\OO_{C}}(\Delta) \to \EEnd_{\OO_S}(\pi_*\Delta),
  \]
  taking $f$ to multiplication by $f$ is locally free of rank $\delta^2 - c + \delta$.
\end{enumerate}

The moduli stack parametrizing families of $n$-pointed equinormalized curves of genus $g$ with invariants $\delta, c$ is denoted by $\EE^{\delta,c}_{g,n}$.
\end{definition}

\begin{proposition}
Given a family of equinormalized curves
\[
  \begin{tikzcd}
     \tilde{C} \ar[rr, "\nu"] \ar[rd, "\tilde{\pi}"] &  & C \ar[dl, "\pi"] \\
     & S & 
  \end{tikzcd}
\]
of genus $g$ with invariants $\delta, c$, formation of the conductor ideal $\cond_{\tilde{C}/C}$ and $\Delta_{\tilde{C}/C}'$ commute with base change. Moreover $\pi_*\Delta'$ is locally free of rank $\delta'$ and $\tilde{\pi}_*(\OO_{\tilde{C}}/\Cond_{\tilde{C}/C})$ is locally free of rank $c$.
\end{proposition}

\begin{proof}
Since $\Delta$ is supported on a closed subscheme of $C$ finite over $S$, we may reduce to the affine case, and the result follows from Proposition \ref{prop:conductor_commutes_base_change}.
\end{proof}

\subsection{Construction of $\EE^{\delta,c}_{g,n}$}
\label{ssec:construction}

\begin{figure}
\[
\begin{tikzcd}
  \Ter^{\delta,c}_{g,n} \ar[r,"\sim"] \ar[d, hook] & \EE^{\delta,c}_{g,n} \ar[ddl,bend left] & {} \\
  \Ter^{\delta,c}_{Z/\tilde{C}/\calH^{\delta,c}_{g,n}} \ar[d] & & \ar[u, "\substack{\text{require connected,}\\\text{correct genus}}"'] \\
  \calH^{\delta,c}_{g,n} \ar[d] & & \ar[u, "\text{add subalgebra data}"'] \\
  \moduli^\delta_{g,n} & &  \ar[u, "\text{add branch subscheme}"' ]
\end{tikzcd}
\]
\caption{Intermediate spaces in the construction of $\EE^{\delta,c}_{g,n}$. The diagonal arrow is the one that we will later stratify into fiber bundles indexed by combinatorial type.}
\label{fig:e_delta_delta_prime_construction}
\end{figure}

Fix non-negative integers $g,n, \delta, c$ with $\delta \leq c \leq 2\delta$. We now give a construction of $\EE_{g,n}^{\delta,c}$ as an algebraic stack over a moduli space of curves. As a base space for this construction, we consider a moduli stack $\moduli^\delta_{g,n}$ parametrizing the possible $n$-marked source curves $(\tilde{C}, p_1, \ldots, p_n).$
Observe that if $\nu : \tilde{C} \to C \in \EE^{\delta,c}_{g,n}$ is an equinormalized curve and $K$ is the set of connected components of $\tilde{C}$, then the normalization sequence implies
\begin{equation} \label{eq:c_twiddle_components}
  1 \leq \#K = 1 + \delta - g + \sum_{k \in K} g(k) \leq 1 + \delta.
\end{equation}

\begin{definition} \label{def:m_gn_delta}
Let $\moduli^\delta_{g,n}$ be the moduli stack parametrizing smooth, proper, $n$-pointed, possibly disconnected curves $(\tilde{C}, p_1, \ldots, p_n)$ with distinct markings such that
\[
  \#K = 1 + \delta - g + \sum_{k \in K} g(k)
\]
where $K$ is the set of connected components of $\tilde{C}$.
\end{definition}

Observe that $\moduli^\delta_{g,n}$ is a finite union of stacks of the form
\[
  \left[\prod_{k \in K} \moduli_{g(k),S(k)} / G\right]
\]
where $g(k)$ is the genus of the component $k$, $S(k)$ is the set of markings on $k$, and $G$ is the group permuting the factors with equal genus and no markings. We next add the data of a length $c$ subscheme, which will be the conductor locus of $\nu : \tilde{C} \to C$.

\begin{definition} \label{def:H_gn_delta_c}
Let $\calH^{\delta,c}_{g,n}$ be the relative Hilbert scheme of $c$ points of the universal curve of $\moduli^\delta_{g,n}$. Write $\tilde{\mathcal{C}}$ for the universal curve over $\calH^{\delta,c}_{g,n}$ and $\mathcal{Z}$ for its closed subscheme.
\end{definition}

Since the universal curve of $\moduli^\delta_{g,n}$ is smooth, $\calH^{\delta,c}_{g,n}$ is represented by the $c$-fold symmetric product of the universal curve over $\moduli^\delta_{g,n}$.

\begin{definition} \label{def:ter_gn_delta_c}
Write $\Ter^{\delta,c}_{g,n}$ for the open and closed substack of
$\Ter^{\delta,c}_{\calZ/\tilde{\mathcal{C}}/\calH^{\delta,c}_{g,n}}$ where the curve $\mathcal{C}$ is connected of arithmetic genus $g$.
\end{definition}

Following the arrows $\Ter^{\delta,c}_{g,n} \to \calH^{\delta,c}_{g,n} \to \moduli^\delta_{g,n} \to \Spec \ZZ$, we see that $\Ter^{\delta,c}_{g,n}$ is a fibered category over $\Sch$ whose fiber on a scheme $S$ is the category parametrizing tuples of
\begin{enumerate}
  \item A smooth, proper, possibly disconnected family of curves $\tilde{\pi} : \tilde{C} \to S$ with $n$ sections $p_i : S \to \tilde{C}$,
  \item A closed subscheme $Z \subseteq \tilde{C}$, flat over $S$ of rank $c$,
  \item A flat and proper family of connected, reduced curves, $\pi : C \to S$ of arithmetic genus $g$;
  \item A morphism $\nu : \tilde{C} \to C$ over $S$
\end{enumerate}
such that
\begin{enumerate}
  \item $\nu$ is affine,
  \item $\nu^\sharp : \OO_{C} \to \nu_*\OO_{\tilde{C}}$ is injective,
  \item $\nu^\sharp(\OO_{C})$ contains $\nu_*\mathscr{I}_{Z/\tilde{C}}$,
  \item $\pi_*\Delta_{\tilde{C}/C}$ is locally free of rank $\delta$ over $S$,
  \item the cokernel of the composite
  \[
    \pi_*(\OO_{C}/\cond_{\tilde{C}/C}) \to \pi_*\EEnd_{\OO_{C}}(\Delta) \to \EEnd_{\OO_S}(\pi_*\Delta)
  \]
  is locally free of rank $\delta^2 - c + \delta$.
\end{enumerate}

\begin{theorem} \label{thm:algebraicity}
The natural map
\[
  \Ter^{\delta,c}_{g,n} \overset{\sim}{\longrightarrow} \EE^{\delta,c}_{g,n}
\]
forgetting the subscheme $Z$ is an equivalence of fibered categories. In particular, $\EE^{\delta,c}_{g,n}$ is an algebraic stack, with a separated, representable morphism to $\moduli^\delta_{g,n}$.
\end{theorem}
\begin{proof}
To obtain an inverse, we reconstruct the subscheme $Z$ from the conductor ideal.

Let $(\nu : \tilde{C} \to C, p_1, \ldots, p_n) \in \EE^{\delta,c}_{g,n}(S)$ be an $S$-point of $\EE^{\delta,c}_{g,n}$.
Let $Z = V(\Cond_{\tilde{C}/C}) \subseteq \tilde{C}$. By Lemma \ref{lem:conductor_largest_ideal}, $\nu_*{\Cond_{\tilde{C}/C}}$ factors through $\OO_C$, as required. 
Then $Z$ is locally free of rank $c$ over $S$. 
The formation of $Z$ is clearly invariant under isomorphism, and commutes with pullback by Proposition \ref{prop:conductor_commutes_base_change}. Therefore, we have a morphism of fibered categories $\EE^{\delta,c}_{g,n} \to \Ter^{\delta,c}_{g,n}$.
Moreover, since $\Cond_{\tilde{C}/C}$ is the largest quasicoherent sheaf of $\OO_{\tilde{C}}$-ideals such that $\nu_*\Cond_{\tilde{C}/C}$ factors through $\OO_{C}$, it is the unique such ideal whose vanishing has rank $c$ over $S$.
By the uniqueness, $\EE^{\delta,c}_{g,n} \to \Ter^{\delta,c}_{g,n}$ is inverse to the forgetful morphism. 
\end{proof}

\begin{proposition} \label{prop:stratification_by_delta_deltaprime}
The $\EE^{\delta,c}_{g,n}$s form a locally closed stratification of $\EE_{g,n}$ in the sense that for any morphism $f : S \to \EE_{g,n}$ there is a decomposition
$\sqcup_{\delta,c} S^{\delta,c} \to S$ of $S$ into disjoint locally closed subschemes, compatible with base change, such that $f|_{S^{\delta,c}}$ factors through $\EE^{\delta,c}_{g,n}$.

Moreover, provided $\delta > 0$, $\EE^{\delta,c}_{g,n}$ is non-empty only for $\delta < c \leq 2\delta$. In addition, when $c = 2\delta$, $\EE^{\delta,c}_{g,n}$ is open in
the sense that $S^{\delta,c}$ is open in $S$, and $\EE^{\delta,c}_{g,n}$ is the locus of Gorenstein curves of total delta invariant $\delta$ in $\EE_{g,n}.$
\end{proposition}
\begin{proof}
The rank of $\pi_*\Delta$ is locally constant, so we may pass to an open and closed subset of $S$ on which $\delta$ is constant.

We then apply \cite[Tag 05P9]{stacks-project} to the cokernel of $\pi_*(\OO_C/\cond_{\tilde{C}/C}) \to \EEnd_{S}(\pi_*\Delta)$ to obtain a locally closed stratification by $c$, compatible with base change.

The claims about non-emptiness and that $\EE^{\delta,c}_{g,n}$ is the Gorenstein locus follow directly from Lemma \ref{lem:conductor_bounds_and_gorenstein}. To see
that $\EE^{\delta,2\delta}_{g,n}$ is open, we may either observe that Gorenstein-ness is an open condition or that $\EE^{\delta,2\delta}_{g,n}$
is cut out by the minimal rank locus of the cokernel of $\pi_*(\OO_C/\cond_{\tilde{C}/C}) \to \EEnd_{S}(\pi_*\Delta)$.
\end{proof}

\begin{remark} \label{rem:e_bounded_conductance}
We can loosen our construction of $\EE^{\delta,c}_{g,n}$ by omitting the restriction to total conductance $c$: let $\Ter^{\delta, \leq c}_{g,n}$ be the open and closed substack of $\Ter^{\delta}_{Z/\tilde{C}/\calH^{\delta,c}_{g,n}}$ where the curve $C$ is connected of arithmetic genus $g$. Then $\Ter^{\delta,\leq c}_{g,n}$ is an algebraic stack, \emph{proper} over $\moduli^\delta_{g,n}$ containing $\EE^{\delta,c}_{g,n}$ as an open substack. Its $S$-points are tuples of
\begin{enumerate}
  \item A smooth, proper, possibly disconnected family of curves $\tilde{\pi} : \tilde{C} \to S$ with $n$ sections $p_i : S \to \tilde{C}$,
  \item A closed subscheme $Z \subseteq \tilde{C}$, flat over $S$ of rank $c$,
  \item A flat and proper family of connected, reduced curves, $\pi : C \to S$ of arithmetic genus $g$;
  \item A morphism $\nu : \tilde{C} \to C$ over $S$
\end{enumerate}
such that
\begin{enumerate}
  \item $\nu$ is affine,
  \item $\nu^\sharp : \OO_{C} \to \nu_*\OO_{\tilde{C}}$ is injective,
  \item $\nu^\sharp(\OO_{C})$ contains $\nu_*\mathscr{I}_{Z/\tilde{C}}$,
  \item $\pi_*\Delta_{\tilde{C}/C}$ is locally free of rank $\delta$ over $S$.
\end{enumerate}

As with $\Ter^{\delta,c}_{g,n}$, there is a natural morphism $\Ter^{\delta,\leq c}_{g,n} \to \EE^{\delta}_{g,n}$ given by taking $(\tilde{C} \to C, Z) \mapsto (\tilde{C} \to C)$. However, it is no longer an equivalence of categories since the subscheme $Z$ cannot be recovered from $\tilde{C} \to C$, as we will see in Remark \ref{rem:fixed_conductance_needed}.
\end{remark}

\begin{remark}
Given the properness of $\Ter^{\delta,\leq c}_{g,n}$ over $\moduli^\delta_{g,n}$, it is natural to wonder if a universally closed moduli stack containing $\EE_{g,n}$ could be constructed by allowing the curve $\tilde{C}$ to acquire nodal singularities. We leave this as a question for future work.
\end{remark}

We now tie up some loose ends from the introduction.

\begin{proposition} \label{prop:equinormalized_to_ugn}
Let $k$ be an algebraically closed field. The morphism of fibered categories $\EE_{g,n} \to \UU_{g,n}$ taking
\[
  (\nu : \tilde{C} \to C, p_1, \ldots, p_n) \mapsto (C, \nu(p_1),\ldots, \nu(p_n))
\]
is finite-to-one on isomorphism classes of $k$ points, and the restriction of $\EE_{g,n}(k) \to \UU_{g,n}(k)$ to the locus of smooth markings is an equivalence of categories.
\end{proposition}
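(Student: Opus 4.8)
The plan is to reduce the whole statement to the universal property of the normalization of a reduced curve. The first step is to unpack what an object of $\EE_{g,n}(k)$ is: I claim it is nothing more than a connected reduced proper curve $C/k$ of arithmetic genus $g$, its normalization $\bar\nu\colon\bar C\to C$, and a choice of point $p_i\in\bar\nu^{-1}(q_i)$ over each of $n$ chosen points $q_1,\dots,q_n$ of $C$. Indeed, in a normalized curve $(\nu\colon\tilde C\to C,p_\bullet)$ the morphism $\nu$ is finite (this is built into, or readily deduced from, the definition of an equinormalized curve), so, since $\Delta=\nu_*\OO_{\tilde C}/\OO_C$ has finite support, $\nu$ restricts to an isomorphism over the dense open $C\smallsetminus\mathrm{Supp}\,\Delta$; a finite birational morphism onto $C$ from the smooth (hence normal) curve $\tilde C$ is canonically identified with the normalization $\bar\nu\colon\bar C\to C$. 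Thus $\tilde C$ and $\nu$ are determined by $C$, and the only data of $(\nu\colon\tilde C\to C,p_\bullet)$ beyond its image $(C,q_\bullet)$, with $q_i\coloneqq\nu(p_i)$, in $\UU_{g,n}(k)$ is the choice of a branch $p_i$ at each $q_i$.

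Given this, the finite-to-one statement is immediate: fixing $(C,q_\bullet)\in\UU_{g,n}(k)$, any object of $\EE_{g,n}(k)$ mapping to its isomorphism class is isomorphic to $(\bar\nu\colon\bar C\to C,p_\bullet)$ for a tuple with $p_i\in\bar\nu^{-1}(q_i)$, and there are at most $\prod_{i=1}^n\#\bar\nu^{-1}(q_i)=\prod_{i=1}^n m(q_i)<\infty$ such tuples, hence finitely many isomorphism classes in the fiber.

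For the equivalence over the locus of smooth markings, I would work with the full subcategory of $\EE_{g,n}(k)$ (resp.\ $\UU_{g,n}(k)$) of objects all of whose markings $\nu(p_i)$ (resp.\ $q_i$) are smooth points of $C$; on the $\EE$ side the markings are then automatically distinct, since the $p_i$ are disjoint and a smooth point has a unique preimage under $\bar\nu$, so in fact each $p_i$ is the unique point of $\bar C$ over $q_i$. For essential surjectivity, given such a $(C,q_\bullet)$ I take $\bar\nu\colon\bar C\to C$ and let $p_i$ be that unique point over $q_i$; then $\bar C$ is smooth and proper, the $p_i$ are disjoint sections, $\bar\nu^\sharp$ is injective, and $\Delta$ has finite length supported on the finite singular locus of $C$, so $(\bar\nu,p_\bullet)$ satisfies the conditions of a normalized curve and $F$ sends it to $(C,q_\bullet)$. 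For full faithfulness, a morphism in $\EE_{g,n}(k)$ is a compatible pair $(\phi\colon C\xrightarrow{\sim}C',\ \tilde\phi\colon\tilde C\xrightarrow{\sim}\tilde C')$ respecting the markings, and $F$ returns $\phi$; by the universal property of normalization $\tilde\phi$ must be the isomorphism $\bar C\xrightarrow{\sim}\bar C'$ induced by $\phi$, so it is determined by $\phi$ (faithfulness); conversely, any isomorphism $\phi\colon(C,q_\bullet)\xrightarrow{\sim}(C',q'_\bullet)$ of objects with smooth markings lifts to that $\tilde\phi$, which carries the unique point over $q_i$ to the unique point over $\phi(q_i)=q'_i$, i.e.\ $p_i\mapsto p'_i$, so $(\phi,\tilde\phi)$ lies over $\phi$ (fullness). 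Together with essential surjectivity this gives the equivalence.

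The statement is close to formal, and the one point I expect to require genuine care is the reduction in the first paragraph—that $\nu$ is finite, hence literally the normalization, so that $\tilde C$ carries no extra information (in particular no spurious contracted components)—since this is where the finiteness of $\Delta$ is actually used rather than formal nonsense. The remaining work is bookkeeping around the universal property of normalization and the uniqueness of branches over smooth points, together with pinning down the conventions on the $\UU_{g,n}$ side, in particular the distinctness of marked points and the precise meaning of ``locus of smooth markings''.
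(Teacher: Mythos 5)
Your proposal is correct and takes essentially the same route as the paper: identify $\tilde{C}$ with the normalization of $C$, so that the only data beyond $(C,q_\bullet)$ is a choice of branch over each marking (whence finiteness of fibers), and then use the universal property of normalization together with the uniqueness of the branch over a smooth marking to get the equivalence on the smooth-marking locus. The point you single out as needing care --- that $\nu$ is finite and birational, with no contracted components, hence literally the normalization --- is exactly the step the paper compresses into ``by the universal property of normalization,'' so your write-up is, if anything, slightly more explicit there.
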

\begin{proof}
Given a curve $(C, q_1, \ldots, q_n)$ in $\UU_{g,n}(k)$, let $\nu : \tilde{C} \to C$ be its normalization and let $p_1, \ldots, p_n$ be lifts of $q_1, \ldots, q_n$ to $\tilde{C}$, respectively. Then $(\nu : \tilde{C} \to C, p_1, \ldots, p_n)$
is a point of $\EE_{g,n}(k)$ mapping to $(C, q_1, \ldots, q_n)$, so $\EE_{g,n}(k) \to \UU_{g,n}(k)$ is essentially surjective. Since preimages of smooth markings remain smooth, the restriction of the functor to the subcategories of curves with smooth markings is also essentially surjective.

Now suppose that $(\nu' : \tilde{C}' \to C, p_1', \ldots, p_n')$ is a second preimage of $(C, q_1, \ldots, q_n)$. By the universal property of normalization, there is a unique isomorphism $\phi : \tilde{C}' \to \tilde{C}$ over $C$.
Note that $\phi(p_1'), \ldots, \phi(p_n')$ are lifts of $q_1, \ldots, q_n$ to $\tilde{C}$. Since $\nu$ is finite, there are at most finitely many choices of lifts. Therefore $(\nu' : \tilde{C}' \to C, p_1', \ldots, p_n')$ is isomorphic to one of the finitely many normalized curves of the form $(\nu : \tilde{C} \to C, p_1'', \ldots, p_n'')$ where $p_i''$ is a lift of $q_i$ for each $i$. We conclude $\EE_{g,n}(k) \to \UU_{g,n}(k)$ is finite-to-one on isomorphism classes.

Finally we check full faithfulness of the restriction of $\EE_{g,n}(k) \to \UU_{g,n}(k)$ to the subcategories in which markings are smooth. Suppose $(C, q_1, \ldots, q_n)$ and $(C', q_1', \ldots, q_n')$ lie in $\UU_{g,n}(k)$ and the markings $q_i$, $q_i'$ are smooth. Choose respective preimages $(\nu : \tilde{C} \to C, p_1, \ldots, p_n)$ and $(\nu' : \tilde{C}' \to C', p_1',\ldots,p_n')$
in $\EE_{g,n}(k)$. Since $\nu, \nu'$ restrict to an isomorphism in the smooth locus, the choice of lifts $p_1, \ldots, p_n$ and $p_1', \ldots, p_n'$ are unique. If there is an isomorphism $\psi : (C, q_1, \ldots, q_n) \to (C', q_1', \ldots, q_n')$, then by the universal property of normalization,
it admits a unique lift to a morphism $\tilde{\psi} : \tilde{C} \to \tilde{C}'$. Since the lifts of the $q_i$ and $q_i'$ are unique it is automatic that $\tilde{\psi}$ takes $p_1, \ldots, p_n$ to $p_1', \ldots, p_n'$, so $\tilde{\psi}$ defines a morphism $(\nu : \tilde{C} \to C, p_1, \ldots, p_n) \to (\nu' : \tilde{C}' \to C', p_1', \ldots, p_n')$. Thus, morphisms $\psi : C \to C'$ lift uniquely to $\EE_{g,n}(k)$, that is, the restricted functor is fully faithful. Since the restriction
of $\EE_{g,n}(k) \to \UU_{g,n}(k)$ to the locus of smooth markings is both fully faithful and essentially surjective, it is an equivalence of categories.
\end{proof}

\begin{example} \label{ex:egn_not_stratification_positive_char}
We show that the forgetful map $\EE_{g,n} \to \UU_{g,n}$ may be ramified in positive characteristic, following \cite[Example 1.76]{fred_thesis}.

Let $k$ be an algebraically closed field of characteristic 3. Let $\nu : \tilde{C} \to C$ be the normalization of the cuspidal cubic, given locally by $k[x,y]/(y^2 - x^3) \to k[t]$ with $x \mapsto t^2, y \mapsto t^3$. This is a normalized curve of genus 1 with $\delta = 1$ and $c = 2$. Let $\xi : \Spec k \to \UU_{1,0}$ be the classifying map for $C$. The fiber product $\mathfrak{X} = \EE^{1,2}_{1,0} \times_{\UU_{1,0},\xi} \Spec k$ is the algebraic stack whose category of $S$-points has objects the equinormalized curves $\nu_1 : \tilde{C}_1 \to C \times S$ and has morphisms the commutative diagrams
\[
\begin{tikzcd}
  \tilde{C}_1 \ar[r, "\beta"] \ar[d, "\nu_1"] & \tilde{C}_2 \ar[d, "\nu_2"] \\
  C \times_{\Spec k} S \ar[r, equals] & C \times_{\Spec k} S.
\end{tikzcd}
\]

By Proposition \ref{prop:equinormalized_to_ugn}, $\mathfrak{X}$ possesses only one $k$-point, namely $\nu : \tilde{C} \to C$. In order to show $\EE^{1,2}_{1,0} \to \UU_{1,0}$ is ramified, we find non-isomorphic tangent vectors to this point of $\mathfrak{X}$. The trivial tangent vector is represented by the pullback of $\nu$ to $\Spec k[\epsilon]/\epsilon^2$; call it $\nu_1 : \tilde{C}_1 \to C \times_{\Spec k} \Spec k[\epsilon]/\epsilon^2$. Now, observe that $C \times_k \Spec k[\epsilon]/\epsilon^2$ possesses an automorphism $\alpha$ given locally by $x \mapsto x + \epsilon, y \mapsto y$. Let $\nu_2 = \alpha \circ \nu_1$. It is clear that $\nu_2$ also defines a tangent vector to the unique $k$-point of $\mathfrak{X}$. It is not isomorphic to $\nu_1$ since an isomorphism $\beta$ as in the commutative diagram above would have to take $t^2 \mapsto t^2 + \epsilon$ and $t^3 \mapsto t^3$. But then $\beta^\sharp(t) = t - \epsilon t^{-1}$, which is not regular.
\end{example}

\section{Stratification of the Hilbert scheme of points of a smooth curve}
\label{sec:hilbert_stratification}
Let $\pi : C \to S$ be a family of smooth, possibly disconnected curves over a scheme $S$ and let $m$ be a positive integer. In this section we explain how to stratify the Hilbert scheme of $m$ points in $C/S$ according to the integer partitions of $m$. Later, taking $m$ equal to the total conductance, this stratification will allow us to fix the branch conductances of singular curves. In order for this stratification to behave well, we will later introduce the assumption that $S$ is characteristic 0. 

Let us introduce some notation. The $S$-scheme $C^m = C \times_S \cdots \times_S C$ parametrizes families of $m$ ordered points of $C$. It possesses a natural action of the symmetric group on $m$ letters $\mathfrak{S}_m$ by permuting the $m$ coordinates.\label{sigma_m_first_ref} The Hilbert scheme $\Hilb^m_{C/S}$ parametrizes closed subschemes of $C$ of rank $m$ over $S$; it is well-known that since $C$ is a smooth curve, it is isomorphic to the symmetric product $C^{(m)}$ parametrizing $m$ unordered points of $C$. The symmetric product is obtained affine locally by taking Spec of the $\mathfrak{S}_m$-invariant functions on $C^m$ and it is a geometric quotient of $C^m$ by $\mathfrak{S}_m$. The natural quotient map $\tau : C^m \to C^{(m)}$ is finite locally free of rank $m!$.

There are various diagonal closed subschemes $\{ \Diag_{\mathcal{P}} \}$ of $C^m$ indexed by set partitions $\mathcal{P}$ of $[m] = \{1, \ldots, m\}$, namely:
\begin{equation} \label{eq:d_tilde_p}
  \Diag_{\mathcal{P}} \coloneqq \{ (x_1, \ldots, x_m) \in C^m \mid i \sim_{\mathcal{P}} j \implies x_{i} = x_j \}.
\end{equation}
Observe that for any two partitions $\mathcal{Q}, \mathcal{P}$ of $[m]$, we have $\Diag_{\mathcal{Q}} \subseteq \Diag_{\mathcal{P}}$ if and only if $\mathcal{Q}$ is a coarsening of $\mathcal{P},$ which we denote by $\mathcal{Q} \preceq \mathcal{P}$. Thus, there is an induced locally closed stratification $\{ \Diag_{\mathcal{P}}^\circ \}_{\mathcal{P}}$ given by
\begin{equation} \label{eq:d_tilde_circ_p}
  \Diag_{\mathcal{P}}^\circ = \Diag_{\mathcal{P}} - \bigcup_{\mathcal{Q} \prec \mathcal{P}} \Diag_{\mathcal{Q}}.
\end{equation}

We find an analogous stratification in $C^{(m)}$ by taking images. Given a set partition $\mathcal{P} = \{ P_1, \ldots, P_k \}$ of $[m]$, denote by $[\mathcal{P}]$ the integer partition $|P_1| + \cdots + |P_k| = m$. We define symmetrized diagonal loci in $C^m$ by
\begin{equation} \label{eq:d_tilde_symmetrized}
  \Diag_{[\mathcal{P}]} = \bigcup_{\sigma \in \mathfrak{S}_m} \sigma(\Diag_{\mathcal{P}}) = \bigcup_{\sigma \in \mathfrak{S}_m} \Diag_{\sigma(\mathcal{P})}.
\end{equation}
For each such $[\mathcal{P}]$, we define a closed subscheme
\begin{equation} \label{eq:d_p}
  \imDiag_{[\mathcal{P}]} \coloneqq \tau(\Diag_{\mathcal{P}}) = \tau(\Diag_{[\mathcal{P}]}) \subseteq C^{(m)},
\end{equation}
of $C^{(m)}$ where $\tau(\Diag_{\mathcal{P}})$ denotes the scheme-theoretic image. As before, $\imDiag_{[\mathcal{Q}]} \subseteq \imDiag_{[\mathcal{P}]}$ if and only if $[\mathcal{Q}]$ is a coarsening of $[\mathcal{P}]$, which we denote by $[\mathcal{Q}] \preceq [\mathcal{P}]$. Hence, there is a locally closed stratification $\{ \imDiag_{[\mathcal{P}]}^\circ \}_{[\mathcal{P}]}$ of $C^{(m)}$ given by
\begin{equation} \label{eq:d_circ_p}
  \imDiag_{[\mathcal{P}]}^\circ = \imDiag_{[\mathcal{P}]} - \bigcup_{[\mathcal{Q}] \prec [\mathcal{P}]} \imDiag_{[\mathcal{Q}]}.
\end{equation}

Next, observe that $\Diag_{\mathcal{P}}$ is preserved by the subgroup
\begin{align*}
  G_{\mathcal{P}} &= \{ \sigma \in \mathfrak{S}_m \mid \sigma(\Diag_{\mathcal{P}}) \subseteq \Diag_{\mathcal{P}} \} \\
                 &= \{ \sigma \in \mathfrak{S}_m \mid i \sim_{\mathcal{P}} j \iff \sigma(i) \sim_{\mathcal{P}} \sigma(j) \}
\end{align*}
of $\mathfrak{S}_m$. There is an induced action of $G_{\mathcal{P}}$ on both $\Diag_{\mathcal{P}}$ and $\Diag^\circ_{\mathcal{P}}$. The stabilizer of any point in $\Diag^\circ_{\mathcal{P}}$ is
\begin{align*}
  K_{\mathcal{P}} = \{ \sigma \in \mathfrak{S}_m \mid \sigma(P) \subseteq P \text{ for }P \in \mathcal{P} \}.
\end{align*}
It follows that there is a faithful action on $\Diag^\circ_{\mathcal{P}}$ by $G_{\mathcal{P}} / K_{\mathcal{P}}$.
We can identify $G_{\mathcal{P}}/K_{\mathcal{P}}$ with the group
\begin{equation} \label{eq:sigma_p}
  \mathfrak{S}_{\mathcal{P}} = \{ \sigma \in \mathrm{Sym}(\mathcal{P}) : |\sigma(P_i)| = |P_i| \text{ for }P_i \in \mathcal{P}\}.
\end{equation}
which permutes the parts of $\mathcal{P}$ with the same size. Under this identification, $\sigma \in \mathfrak{S}_{\mathcal{P}}$ acts on $\Diag_{\mathcal{P}}$ by taking the coordinates with indices in a part $P$ to the coordinates with indices in $\sigma(P)$.

One might naively expect that $\Diag^{\circ}_{\mathcal{P}} \to \imDiag_{[\mathcal{P}]}^\circ$ is the geometric quotient by $\mathfrak{S}_{\mathcal{P}}$, as this is the case on closed points. However, this does not hold in positive characteristic. Another difficulty is that the formation of $\imDiag_{[\mathcal{P}]}$ may not commute with base change. The following example illustrates both difficulties.

\begin{example} \label{ex:positive_characteristic_bad}
Let $C = \AA^1$, $S = \Spec \ZZ$. Then $C^2 \cong \Spec \ZZ[x,y]$ and $C^{(2)} \cong \Spec \ZZ[b,c]$ and $\tau : C^2 \to C^{(2)}$ is induced by the homomorphism $\ZZ[b,c] \to \ZZ[x,y]$ sending $b \mapsto -(x+y)$ and $c \mapsto xy$. Write $\mathcal{P} = 12$ for the indiscrete partition of $\{1,2\}$. Then $\Diag_{12} = \Diag_{12}^\circ = V(x-y)$. One checks that the scheme-theoretic image is cut out by the degree two discriminant, i.e., $\imDiag_{[12]} = \imDiag_{[12]}^\circ = V(b^2-4c)$. Note that the base change to $\FF_2$ is $\imDiag_{[12]} \otimes \FF_2 \cong V(b^2)$.

Now consider $C' = C \times \Spec \FF_2$ and $S' = \Spec \FF_2$ and denote the data associated to $C'/S'$ by primes. Then we have $(C')^2 \cong \Spec \FF_2[x,y]$ and $(C')^{(2)} \cong \Spec \FF_2[b,c]$ and $\tau : (C')^2 \to (C')^{(2)}$ is induced by the homomorphism $\FF_2[b,c] \to \FF_2[x,y]$ sending $b \mapsto -(x+y)$ and $c \mapsto xy$. As before, $\Diag_{12}' = V(x-y)$. However, the scheme-theoretic image is $\imDiag_{[12]}' = V(b)$, not $V(b^2)$. This shows that we do not have commutativity with base change.

Next, consider the map $\Diag_{12}' \to \imDiag_{[12]}'$. Our expectation is that it is the quotient by the trivial group, i.e., an isomorphism. However, it is induced by the ring homomorphism $\FF_2[c] \to \FF_2[x]$ sending $c \mapsto x^2$. This is not an isomorphism.
\end{example}

\begin{theorem} \label{thm:hilbert_stratification}
Suppose $S$ is a scheme over $\QQ$. Then
\begin{enumerate}
  \item The induced map $\tau : \Diag_{\mathcal{P}}^\circ \to \imDiag_{[\mathcal{P}]}^\circ$ is finite, surjective, \'etale, and it is both the scheme-theoretic quotient by $\mathfrak{S}_{\mathcal{P}}$ and the stack-theoretic quotient by $\mathfrak{S}_{\mathcal{P}}$.
  \item Formation of the locally closed subschemes $\Diag_{\mathcal{P}}^\circ$ and $\imDiag_{[\mathcal{P}]}^\circ$ commutes with base change.
\end{enumerate}
\end{theorem}

\begin{proof}
The map $\tau$ is finite and surjective by construction. Since the invariant functions under the actions of $G_{\mathcal{P}}$ and $\mathfrak{S}_{\mathcal{P}}$ are the same, $\Diag_{\mathcal{P}}^\circ / \mathfrak{S}_{\mathcal{P}} = \Diag_{\mathcal{P}}^\circ / G_{\mathcal{P}}$.
By \cite[Proposition A.8]{mustata_zeta}, the natural map $\Diag_{\mathcal{P}}^\circ / G_{\mathcal{P}} \to \left(\bigsqcup_{\sigma \in \mathfrak{S}_m} \Diag_{\sigma(\mathcal{P})}^\circ \right) / \mathfrak{S}_m$
is an isomorphism.
By \cite[Remark A.27]{mustata_zeta}, since $S$ has characteristic 0, the natural map $\left(\bigsqcup_{\sigma \in \mathfrak{S}_m} \Diag_{\sigma(\mathcal{P})}^\circ \right) / \mathfrak{S}_m \to \imDiag_{[\mathcal{P}]}^\circ$
is an isomorphism. Composing the isomorphisms, we conclude that $\tau : \Diag_{\mathcal{P}}^\circ \to \imDiag_{[\mathcal{P}]}^\circ$ is the quotient by $\mathfrak{S}_{\mathcal{P}}$. Since the action of $\mathfrak{S}_{\mathcal{P}}$ is free, $\tau$ is \'etale \cite[Expos\'e V, Corollaire 2.3]{sga1} and agrees with the stack quotient \cite[Tag 07S7]{stacks-project}. We conclude that (i) holds.

We now turn to the proof of (ii). Let $f : S' \to S$ be a morphism of $\QQ$-schemes. Let $C' = C \times_S S'$ and denote with primes all the data formed from $C'/S'$. It is clear that $\Diag_{\mathcal{P}}' = \Diag_{\mathcal{P}} \times_S S'$, so $(\Diag_{\mathcal{P}}^\circ)' = \Diag_{\mathcal{P}}^\circ \times_S S'$. The natural map $(C')^{(m)} \to C^{(m)} \otimes_S S'$ is an isomorphism, since the corresponding statement for Hilbert schemes is obvious. To conclude the proof it suffices to show that $\imDiag_{[\mathcal{P}]}' \cong \imDiag_{[\mathcal{P}]} \times_S S'$ for $S, S'$ affine.

Let $S = \Spec B$, $S' = \Spec B'$. Let $U \subseteq C^{(m)}$ be an affine open subscheme. Let $A = \Gamma(\tau^{-1}(U), \mathscr{O}_{C^m})$ and write $G$ for $\mathfrak{S}_m$. Then $A^G = \Gamma(U, \mathscr{O}_{C^{(m)}})$. Write $I$ for the ideal of $\Diag_{[\mathcal{P}]}$ on $\tau^{-1}(U)$. We need to show that the natural map $(A/I)^G \otimes_B B' \to (A \otimes_B B' / I \otimes_B B')^G$ is an isomorphism. Since $G$ is finite and we are working in characteristic 0, taking $G$ invariants is exact \cite[Proposition 6.1.10]{weibel}. Then taking $G$ invariants commutes with tensor products \cite[Exercise 2.4.2]{weibel}, so we have the result.
\end{proof}

\begin{remark}
Example \ref{ex:positive_characteristic_bad} shows that the result above cannot be extended to $\ZZ.$ If it could be shown both that $\tau$ is finite flat and that (ii) holds for schemes over $\FF_p$, then our main results should extend to positive characteristic with the \'etale topology replaced by the fppf topology.
\end{remark}

\section{Combinatorial types}
\label{sec:combinatorial_type}
The notion of a dual graph plays an essential role in the study of moduli of nodal curves. We now give a full definition of the combinatorial type of an equinormalized curve which generalizes the notion of a dual graph
 to an arbitrary singular curve.

\begin{definition} \label{def:combinatorial_type}
An ($n$-pointed) \emphbf{combinatorial type} $\Gamma$ consists of
\begin{enumerate}
  \item A finite set $V = S \sqcup K$ of \emphbf{vertices}, made up of a set $S$ of \emphbf{singularities} and a non-empty set $K$ of \emphbf{irreducible components};
  \item A finite set $B$ of \emphbf{edges} or \emphbf{branches};
  \item A finite set $D$ of \emphbf{distinguished points};
  \item Incidence functions:
    \begin{enumerate}
      \item $\sigma : B \to S$, which is surjective;
      \item $\tau : B \to K$;
      \item $\epsilon : D \to K$
    \end{enumerate}
  \item A marking function $\mu : \{ 1, \ldots, n \} \to D \sqcup B$ with image containing $D$;
  \item A genus function $g : V \to \ZZ_{\geq 0}$;
  \item A branch conductance function $c : B \to \ZZ_{\geq 1}$
\end{enumerate}
\end{definition}

We remark that the ``distinguished points'' are intended to model the markings of $\tilde{C}$ away from the preimages of singularities. Markings $p_i$ of $\tilde{C}$ over singularities of $C$
are modeled by setting $\mu(i) \in B$ instead.

\begin{definition}
An \emphbf{isomorphism} of combinatorial types $\phi : \Gamma \to \Gamma'$ consists of (using primes for the data of $\Gamma'$)
bijections $\phi_S : S \to S'$, $\phi_K : K \to K'$, $\phi_B : B \to B'$, $\phi_D : D \to D'$ commuting
with $\sigma, \tau, \mu, \epsilon, g, c$.
\end{definition}

\begin{remark}
It is an interesting and non-obvious problem which combinatorial types are specializations of a fixed combinatorial type $\Gamma.$ Relatedly, can the definition above be generalized to a notion of
morphism of combinatorial types that would play the role of specializations of dual graphs?
\end{remark}

Each $n$-pointed equinormalized curve $\nu : (\tilde{C}; p_1, \ldots, p_n) \to C$ of genus $g$ with invariants $\delta,c$ has an associated $n$-pointed combinatorial type $\Gamma(\tilde{C} \to C)$, constructed as follows.

\begin{itemize}
  \item $S$ is the set of singular points of $C$;
  \item $K$ is the set of connected components of $\tilde{C}$;
  \item $B = \nu^{-1}(S)$;
  \item $\sigma$ is the restriction of $\nu$ from $B$ to $S$;
  \item $\tau$ is the map induced by the inclusion of $B$ in $\tilde{C}$;
  \item $D$ is the set $\{ p_1, \ldots, p_n \} - \nu^{-1}(S)$;
  \item $\mu$ is the map $i \mapsto p_i$;
  \item $\epsilon$ is the map taking $p \in D$ to the connected component containing $p$;
  \item $g|_S$ is given by taking a singularity to its genus;
  \item $c$ is given by taking $b \in B$ to the branch conductance $\dim_{k} (\OO_{\tilde{C}} / \Cond_{\tilde{C}/C})_b$.
  \item $g|_K$ is given by taking a component of $\tilde{C}$ to its arithmetic genus.
\end{itemize}

\begin{example} (Combinatorial type of a nodal curve) \label{ex:comb_type_of_prestable}
The combinatorial type of a nodal curve is essentially a subdivision of its dual graph at the midpoints of its edges. For example, consider an irreducible rational curve $C$ with two self-nodes. Its dual graph would consist of two loops attached to a single vertex. The combinatorial type $\Gamma$ of its normalization $\nu : \tilde{C} \to C$ is as depicted below:

\begin{center}
\begin{tikzpicture}[scale=1.5]
\draw (0,0) to[out=40,in=140]node[above]{\tiny$b_1$} (1,0)
      (0,0) to[out=-40,in=220]node[below]{\tiny$b_2$} (1,0)
      (1,0) to[out=40,in=140]node[above]{\tiny$b_3$} (2,0)
      (1,0) to[out=-40,in=220]node[below]{\tiny$b_4$} (2,0);
\draw[fill=white]
  (0,0) +(-.2,-.2) rectangle +(.2,.2)
  (1,0) circle (5pt)
  (2,0) +(-.2,-.2) rectangle +(.2,.2);
\node at (0,0) {\tiny$s_1$};
\node at (1,0) {\tiny$k_1$};
\node at (2,0) {\tiny$s_2$};
\end{tikzpicture}
\end{center}

Explicitly, the data of $\Gamma$ are
\[
  K = \{ k_1 \}, \quad S = \{ s_1, s_2 \}, \quad B = \{ b_1, b_2, b_3, b_4 \}, \quad D = \emptyset
\]
and
\begin{align*}
  \sigma(b_1) &= \sigma(b_2) = s_1, \quad \tau(b_i) = k_1, \quad g(k_1) = g(s_i) = 0, \\
  \sigma(b_3) &= \sigma(b_4) = s_2, \quad c(b_i) = 1.
\end{align*}

The automorphisms of $\Gamma$ are determined by their action on $B$ (see Lemma \ref{lem:isos_determined_by_branches}), so we may identify $\Aut(\Gamma)$
with a subgroup of $S_4$. As $\Aut(\Gamma)$ contains the automorphism taking $b_1 \mapsto b_3 \mapsto b_2 \mapsto b_4 \mapsto b_1$, and the automorphism
taking $b_1 \mapsto b_2 \mapsto b_1, b_3 \mapsto b_3, b_4 \mapsto b_4$, but not $b_1 \mapsto b_3 \mapsto b_1, b_2 \mapsto b_2, b_4, \mapsto b_4$, we may identify
$\Aut(\Gamma)$ with the subgroup $\langle (1324), (12) \rangle \leq S_4$, isomorphic to the dihedral group of order 8.
\end{example}

\begin{example} \label{ex:comb_type_markings} (Markings in combinatorial types)
Consider a marked normalized curve $(\nu : \tilde{C} \to C, p_1, p_2, p_3, p_4)$ as depicted below

\begin{center}
\includegraphics[width=2.5in]{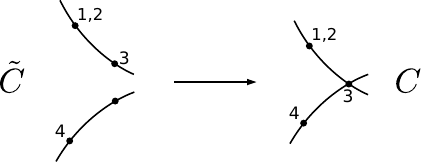}
\end{center}

The curve $\tilde{C}$ possesses 4 marked points, two of which coincide, and one of which maps to a node of $C$. The combinatorial type
$\Gamma$ of this normalized curve, with markings above edges, is:

\begin{center}
\begin{tikzpicture}[scale=1.5]
\draw (0,0) 
      --node[below]{\tiny $d_1$} node[above]{\tiny  1,2}
      (1,0)
      --node[above]{\tiny  3} node[below]{\tiny $b_1$}
      (2,0)
      --node[below]{\tiny $b_2$}
      (3,0)
      --node[below]{\tiny $d_2$} node[above]{\tiny  4}
      (4,0);
\draw[fill=white]
  (1,0) circle (5pt)
  (2,0) +(-.2,-.2) rectangle +(.2,.2)
  (3,0) circle (5pt);
\node at (1,0) {\tiny$k_1$};
\node at (2,0) {\tiny$s_1$};
\node at (3,0) {\tiny$k_2$};
\end{tikzpicture}
\end{center}

Explicitly, the data of $\Gamma$ related to the markings are
\begin{align*}
  K &= \{ k_1, k_2 \}, \quad S = \{ s_1 \}, \quad B = \{ b_1, b_2 \}, \quad D = \{ d_1, d_2 \} \\
\intertext{and}
  \mu(1) &= d_1, \quad \mu(2) = d_1, \quad \mu(3) = b_1, \quad \mu(4) = d_2 \\
  \epsilon(d_1) &= k_1, \quad \epsilon(d_2) = k_2.
\end{align*}
\end{example}

\begin{example} \label{ex:comb_type_fancy_sings} (A combinatorial type with more complex singularities)
Consider a curve $C$ with three rational components,
a ramphoid cusp, a cusp meeting a transverse smooth branch, and a planar 3-fold point, as depicted below:

\begin{center}
\includegraphics[width=2in]{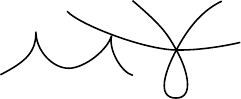}
\end{center}

The ramphoid cusp on the left-most component is unibranch, has genus 2, and conductance $4$. The cusp meeting a transverse
smooth branch has two branches, genus 1, and conductances 1 and 2. The planar 3-fold point has genus 1, three branches, and
conductances 2.

The combinatorial type $\Gamma$ of the normalization map $\nu : \tilde{C} \to C$, with genera above vertices and conductances above edges, is:

\begin{center}
\begin{tikzpicture}[scale=1.5]
\draw (0,0)
      --node[below]{\tiny $b_1$} node[above]{\tiny  $4$}
      (1,0) 
      -- node[below]{\tiny $b_2$} node[above]{\tiny  2}
      (2,0)
      --node[below]{\tiny $b_3$} node[above]{\tiny  1}
      (3,0)
      --node[below]{\tiny $b_4$} node[above]{\tiny  2}
      (4,0);
\draw (4,0) to[out=40,in=140]node[above]{\tiny 2} node[below]{\tiny $b_5$} (6,0)
      (4,0) to[out=-40,in=220]node[above]{\tiny 2} node[below]{\tiny $b_6$} (6,0);
\draw[fill=white]
  (0,0) +(-.2,-.2) rectangle +(.2,.2)
  (1,0) circle (5pt)
  (2,0) +(-.2,-.2) rectangle +(.2,.2)
  (3,0) circle (5pt)
  (4,0) +(-.2,-.2) rectangle +(.2,.2)
  (6,0) circle (5pt);
\node at (0,0) {\tiny$s_1$};
\node at (0,0.35) {\tiny  2};
\node at (1,0) {\tiny$k_1$};
\node at (1,0.35) {\tiny  0};
\node at (2,0) {\tiny$s_2$};
\node at (2,0.35) {\tiny  1};
\node at (3,0) {\tiny$k_2$};
\node at (3,0.35) {\tiny  0};
\node at (4,0) {\tiny$s_3$};
\node at (4,0.35) {\tiny  1};
\node at (6,0) {\tiny$k_3$};
\node at (6,0.35) {\tiny  0};
\end{tikzpicture}
\end{center}
\end{example}

The combinatorial type $\Gamma$ has sets
\[
  K = \{ k_1, k_2, k_3 \}, \quad S = \{ s_1, s_2, s_3 \}, \quad B = \{ b_1, \ldots, b_6 \}, \quad D = \emptyset.
\]
The data assigned to the branches of $\Gamma$ are
\begin{align*}
  \sigma(b_1) &= s_1, \quad \tau(b_1) = k_1, \quad c(b_1) = 4, \\
  \sigma(b_2) &= s_2, \quad \tau(b_2) = k_1, \quad c(b_2) = 2, \\
  \sigma(b_3) &= s_2, \quad \tau(b_3) = k_2, \quad c(b_3) = 1, \\
  \sigma(b_4) &= s_3, \quad \tau(b_4) = k_2, \quad c(b_4) = 2, \\
  \sigma(b_5) &= s_3, \quad \tau(b_5) = k_3, \quad c(b_5) = 2, \\
  \sigma(b_6) &= s_3, \quad \tau(b_6) = k_3, \quad c(b_6) = 2.
\end{align*}
The genera of singularities are
\[
  g(s_1) = 2, \quad g(s_2) = 1, \quad g(s_3) = 1.
\]
The genera of the components are all 0:
\[
  g(k_i) = 0.
\]

\begin{definition} \label{def:comb_type_addition_notions}
A combinatorial type is said to be \emphbf{connected} if its underlying bipartite graph is connected.

The \emphbf{genus} of a connected combinatorial type $\Gamma$, with notation as above, is
\[
  g(\Gamma) = \sum_{v \in V} g(v) + \#B - \#V + 1.
\]
Given a singularity $s \in S$, we say its \emphbf{$\delta$-invariant} is
\[
  \delta(s) = g(s) + \val(s) - 1,
\]
where the \emphbf{valence} $\val(s)$ is the number of branches $b$ with $\sigma(b) = s$. We can rewrite the formula for genus in terms of the delta invariant as
\[
  g(\Gamma) = \sum_{k \in K} g(k) + \sum_{s \in S} \delta(s) - \# K + 1.
\]
We say that the \emphbf{total $\delta$-invariant} of $\Gamma$ is $\delta(\Gamma) = \sum_{s \in S} \delta(s)$, and the  \emphbf{total conductance} of $\Gamma$ is $c(\Gamma) = \sum_{b \in B} c(b).$
\end{definition}

We will also have need of the part of the combinatorial type determined solely by $\tilde{C}$ and the vanishing of the conductor. This amounts to deleting the set $S$
and incidence function $\sigma$.

\begin{definition} \label{def:c_tilde_type}
An ($n$-pointed) \emphbf{$\tilde{C}$-type} $\Gtil$ consists of
\begin{enumerate}
  \item A non-empty set $K$ of \emphbf{vertices} or \emphbf{irreducible components};
  \item A finite set $B$ of \emphbf{edges} or \emphbf{branches};
  \item A finite set $D$ of \emphbf{distinguished points};
  \item Incidence functions:
    \begin{enumerate}
      \item $\tau : B \to K$;
      \item $\epsilon : D \to K$
    \end{enumerate}
  \item A marking function $\mu : \{ 1, \ldots, n \} \to D \sqcup B$ with image containing $D$;
  \item A genus function $g : K \to \ZZ_{\geq 0}$;
  \item A branch conductance function $c : B \to \ZZ_{\geq 1}$
\end{enumerate}

Given a combinatorial type $\Gamma$, the \emphbf{underlying $\tilde{C}$-type of $\Gamma$}, written $\Gtil$, is the $\tilde{C}$-type obtained by forgetting $S$ and $\sigma$.
\end{definition}

The $\tilde{C}$-type of a tuple $(C, (p_i), Z)$, where $(C, (p_i))$ is a smooth $n$-pointed, possibly disconnected curve over an algebraically closed field $k$ and $Z$ is a finite subscheme of $C$, is constructed in the same manner
as the combinatorial type of a normalized curve, except

\begin{itemize}
  \item $B$ is the set of points of $Z$;
  \item $c$ is given by taking $b \in B$ to the multiplicity of $Z$ at $b$, $\dim_k (\OO_Z)_b$.
\end{itemize}

\begin{definition}
An \emphbf{isomorphism} of $\tilde{C}$-types $\phi : \Gtil \to \Gtil'$ consists of (using primes for the data of $\Gtil'$)
bijections $\phi_K : K \to K'$, $\phi_B : B \to B'$, $\phi_D : D \to D'$ commuting
with $\tau, \mu, \epsilon, g, c$.
\end{definition}

\begin{example} \label{ex:c_tilde_type}
The combinatorial type $\Gamma$ of Example \ref{ex:comb_type_of_prestable} has $\tilde{C}$-type $\Gtil$ as below:

\begin{center}
\begin{tikzpicture}[scale=1.5]
\draw (1,0) --node[above]{\tiny$b_1$} (.2,.5)
      (1,0) --node[below]{\tiny$b_2$} (.2,-.5)
      (1,0) --node[above]{\tiny$b_3$} (1.8,.5)
      (1,0) --node[below]{\tiny$b_4$} (1.8,-.5);
\draw[fill=white]
  (1,0) circle (5pt);
\node at (1,0) {\tiny$k_1$};
\end{tikzpicture}
\end{center}

Explicitly, the data of $\Gtil$ are
\[
  K = \{ k_1 \}, \quad B = \{ b_1, b_2, b_3, b_4 \}, \quad D = \emptyset
\]
and
\[
  \tau(b_i) = k_1, \quad g(k_1) = 0, \quad c(b_i) = 1.
\]

The automorphisms of $\Gtil$ are determined by their action on $B$ (see Lemma \ref{lem:isos_determined_by_branches}), so we may identify $\Aut(\Gtil)$
with a subgroup of $S_4$. As any permutation of $B$ induces an automorphism, $\Aut(\Gtil)$ is identified with all of $S_4$.
\end{example}

The example above suggests that $\Aut(\Gamma) \leq \Aut(\Gtil)$. This is true in general.

\begin{lemma} \label{lem:aut_gamma_in_aut_gtil}
Let $\Gamma$ be an $n$-marked combinatorial type. Then there is a natural injective group homomorphism $\Aut(\Gamma) \to \Aut(\Gtil)$ induced by taking
$(\phi_K, \phi_S, \phi_B, \phi_D) \mapsto (\phi_K, \phi_B, \phi_D)$.
\end{lemma}
\begin{proof}
It is clear that the map respects composition. To show injectivity, suppose $\phi$ is an automorphism of $\Gamma$ mapping to the identity. Then $\phi_K, \phi_B,$ and $\phi_D$ are identity maps, and it only remains to show $\phi_S$ is the identity. Let $s \in S$. Then since $\sigma : B \to S$ is surjective, we may choose $b \in B$ such that $\sigma(b) = s$. Then $\phi_S(s) = \phi_S(\sigma(b)) = \sigma(\phi_B(b)) = \sigma(b) = s,$ as desired.
\end{proof}

We will also later consider situations in which the branches come with a numbering. In such cases, we may assign a more rigid notion of combinatorial type or $\tilde{C}$-type, where the branches cannot be exchanged by automorphisms. We will impose a connectedness hypothesis since the only combinatorial types considered in such situations are connected.

\begin{definition} \label{def:rigid_combinatorial_type}
A \emphbf{rigidified combinatorial type} $(\Gamma, \beta)$ \emphbf{with conductances $\vec{c}$} consists of a connected $n$-marked combinatorial type $\Gamma$ and a bijection $\beta : \{1, \ldots, m \} \to B_\Gamma$ such that $c(\beta(i)) = c_i$ for all $i$. An \emphbf{isomorphism of rigidified combinatorial types} $\phi : (\Gamma, \beta) \to (\Gamma', \beta')$
is an isomorphism of underlying combinatorial types such that $\phi_B \circ \beta = \beta'.$

Analogously, a \emphbf{rigidified $\tilde{C}$-type} $(\Gtil, \beta)$ \emphbf{with conductances $\vec{c}$} consists of an $n$-marked $\tilde{C}$-type and a bijection $\beta : \{ 1, \ldots, m \} \to B_{\Gtil}$ such that $c(\beta(i)) = c_i$ for all $i$. An \emphbf{isomorphism of rigidified $\tilde{C}$-types} $\phi : (\Gtil, \beta) \to (\Gtil', \beta')$
is an isomorphism of underlying $\tilde{C}$-types such that $\phi_B \circ \beta = \beta'.$
\end{definition}

\begin{lemma} \label{lem:isos_determined_by_branches}
Let $\Gamma$ be a connected $n$-marked combinatorial type. Then:
\begin{enumerate}
  \item An automorphism $\phi$ of $\Gamma$ is determined by its restriction $\phi_B : B \to B$.
  \item An automorphism $\phi$ of $\Gtil$ is determined by its restriction $\phi_B : B \to B$.
\end{enumerate}
\end{lemma}
\begin{proof}
It suffices to prove (ii) by Lemma \ref{lem:aut_gamma_in_aut_gtil} since an automorphism $\phi$ of $\Gamma$ has the same restriction to branches as the induced automorphism of $\Gtil$. Write $\Gamma = (V = S \sqcup K, B, D, \sigma, \tau, \mu, \epsilon, g, c)$.
Suppose $\phi$ and $\psi$ are automorphisms of its underlying $\tilde{C}$-type $\Gtil$ such that $\phi_B = \psi_B$. We must show $\phi_D = \psi_D$, and $\phi_K = \psi_K$.

($\phi_D = \psi_D$) Let $d \in D$ be a distinguished point. Since $D \subseteq \im(\mu)$, there is some $i \in \{1, \ldots, n\}$ such that $\mu(i) = d.$ Then by definition of isomorphism
of $\tilde{C}$-type, $\phi_D(d) = \phi_D(\mu(i)) = \mu(i) = \psi_D(\mu(i)) = \psi_D(d).$

($\phi_K = \psi_K$) If $K$ is a 1-element set, then $\phi_K = \psi_K$ is the unique function $K \to K$. Otherwise, let $k, k_2 \in K$ be distinct elements of $K$. Let $b \in B$
be the first edge on a path from $k$ to $k_2$ in $\Gamma$ (not $\Gtil$). Then $\tau(b) = k$ and $\phi_K(k) = \phi_K(\tau(b)) = \tau(\phi_B(b)) = \tau(\psi_B(b)) = \psi_K(\tau(b)) = \psi_K(k)$.
\end{proof}

\begin{corollary}
Let $(\Gamma, \beta)$ be a rigidified combinatorial type. Then both $\Aut((\Gamma,\beta))$ and $\Aut((\Gtil,\beta))$ are trivial groups.
\end{corollary}

\section{Stratification by combinatorial type}
\label{sec:stratification}

For the remainder of the paper, we restrict to the category of schemes over $\QQ$. Fix $\delta, c, g, n$ for the remainder of this section. We have two goals: first, to construct the stratum $\EE_\Gamma$ of curves of combinatorial type $\Gamma$ as a locally closed substack of $\EE^{\delta,c}_{g,n}$; and second, to prove that $\EE_\Gamma$ is a fiber bundle over an appropriate moduli stack of smooth curves
$\moduli_{\Gamma}$.

Consider the map $\EE^{\delta,c}_{g,n} \to \calH^{\delta,c}_{g,n}$ constructed in Section \ref{ssec:construction}. In order to cut down to $\EE_\Gamma$,
we need to impose various combinatorial conditions, such as which branches meet which components. For this it is necessary to label branches, impose the relevant combinatorial conditions on the rigidified stack, then forget the labels to arrive at $\EE_\Gamma$. Intuitively speaking the steps are the following:
\begin{enumerate}
  \item Restrict to the part of the relative Hilbert scheme $\calH^{\delta,c}_{g,n}$ where the $c$ points meet with fixed multiplicities $\vec{c} = (c_1, \ldots, c_m)$, which we call $\calH_{\vec{c}}$.
  \item Rigidify by numbering the $m$ distinct points parametrized, arriving at a space $\Hrig_{\vec{c}}$. 
  \item Restrict to the part of $\Hrig_{\vec{c}}$ with the right $\tilde{C}$-type, called $\calH_{(\Gtil, \beta)}$, and the part of the territory over it with the right combinatorial type, called $\Ter_{(\Gamma,\beta)}$.
  \item Take the image of $\Ter_{(\Gamma, \beta)}$ in $\EE^{\delta,c}_{g,n}$ to get rid of the rigidification and arrive at $\EE_{\Gamma}$ itself. (More precisely, $\EE_{\Gamma}$ is defined as a quotient of the orbit of $\Ter_{(\Gamma,\beta)}$.)
\end{enumerate}

We indicate in Figure \ref{fig:stratification_steps} the relationships of the intermediate spaces that we will construct.

\begin{figure}
\begin{center}
\[
\begin{tikzcd}
   & & \EE_\Gamma \ar[r,hook,"\text{loc. cl.}"] &  \EE^{\delta,c}_{g,n} \ar[d, hook,"\text{op. \& cl.}"] & {} \\
   \Ter_{(\Gamma,\beta)} \ar[rru,twoheadrightarrow,bend left=20] \ar[d] \ar[r,hook,"\text{loc. cl.}"] & \Terrig_{\vec{c}} \ar[r, "\mathfrak{S}_{\vec{c}}"] \arrow[dr, phantom, "\ulcorner", very near start] \ar[d] & \Terunrig_{\vec{c}} \ar[r,hook,"\text{loc. cl.}"] \arrow[dr, phantom, "\ulcorner", very near start] \ar[d] & \Ter^{\delta, c}_{\calZ/\tilde{\calC}/\calH^{\delta,c}_{g,n}} \ar[d] & \ar[u, "\substack{\text{require connected,}\\\text{correct genus}}"'] \\
   \calH_{(\Gtil,\beta)} \ar[r,hook, "\text{loc. cl.}"] & \Hrig_{\vec{c}} \ar[r, "\mathfrak{S}_{\vec{c}}"] & \calH_{\vec{c}} \ar[r, hook, "\text{loc. cl.}"] & \calH^{\delta,c}_{g,n} & \ar[u, "\text{add subalgebra data}"'] \\
   {} & \ar[l, "\text{fix comb. type}"] & \ar[l, "\text{add branch order}"] & \ar[l, "\substack{\text{fix branch}\\\text{incidences}}"]
\end{tikzcd}
\]
\caption{Spaces in the construction of the stratification by combinatorial type.}
\label{fig:stratification_steps}
\end{center}

\end{figure}

\bigskip

We denote an integer partition $c_1 + \cdots + c_m = c$ of $c$ by an $m$-vector $\vec{c}$, whose components $c_1, \ldots, c_m$ we make unambiguous by insisting $c_1 \geq c_2 \geq \cdots \geq c_m$.
As explained in Section \ref{sec:hilbert_stratification}, there is a locally closed stratification $\{ \imDiag^\circ_{\vec{c}} \}_{\vec{c}}$ of the Hilbert scheme of $c$ points for any family of curves $C \to S$, such that $\imDiag^\circ_{\vec{c}}$ is the locus parametrizing subschemes of $C$ whose points have multiplicities exactly $c_1, \ldots, c_m$. By Theorem \ref{thm:hilbert_stratification} and our characteristic 0 hypothesis, this locally closed stratification commutes with base change. Therefore it extends to a locally closed stratification $\bigsqcup_{\vec{c}} \calH_{\vec{c}}$ of the stack $\calH^{\delta,c}_{g,n}$.

\begin{definition} \label{def:H_vec_c}
Write $\tilde{\mathcal{C}}$ for the universal curve over $\mathcal{H}^{\delta,c}_{g,n}$ and write $\mathcal{Z}$ for the universal closed subscheme of $\tilde{\mathcal{C}}$.
Given an integer partition $\vec{c}$ of $c$, denote by $\calH_{\vec{c}}$ the locally closed substack on which the points of $\calZ$ have multiplicities exactly $\vec{c}$.
\end{definition}

We rigidify by ordering the points of the branch subscheme.

\begin{definition} \label{def:H_ord_vec_c}
Given an integer partition $\vec{c}$ of $c$, denote by $\Hrig_{\vec{c}}$ the stack parametrizing tuples $(C,  (p_1, \ldots, p_n), (q_1, \ldots, q_m) )$ where $(C, p_1, \ldots, p_n) \in \moduli^\delta_{g,n}$ and $q_1, \ldots, q_m$ are $m$ ordered branch markings 
that may coincide with the $p_i$s but not each other.
\end{definition}

Write $\mathfrak{S}_m$ for the symmetric group on $m$ letters and write
\begin{equation} \label{eq:sigma_vec_c}
  \mathfrak{S}_{\vec{c}} \coloneqq \{ \alpha \in \mathfrak{S}_m \mid c_{\alpha(i)} = c_i \text{ for all }i \}
\end{equation}
for the subgroup permuting indices $i$ with the same multiplicity $c_i$. It is isomorphic to a product of symmetric groups, one for each value of the $c_i$s. There is a faithful right action of the group $\mathfrak{S}_{\vec{c}}$ on $\Hrig_{\vec{c}}$ by $(\tilde{C}, (p_i), (q_i)) \cdot \alpha = (\tilde{C}, (p_i), (q_{\alpha(i)}))$. This coincides with the action of $\mathfrak{S}_{\mathcal{P}}$ considered in Section \ref{sec:hilbert_stratification}, so by Theorem \ref{thm:hilbert_stratification}, we have a finite \'etale quotient map $\Hrig_{\vec{c}} \to \calH_{\vec{c}}.$

\begin{definition} \label{def:ter_vec_c_and_friends}
We denote by $\Terunrig_{\vec{c}}$ the pullback of $\Ter^{\delta, c}_{\calZ/\tilde{\calC}/\calH^{\delta,c}_{g,n}}$ to $\calH_{\vec{c}}$ and by $\Terrig_{\vec{c}}$ the pullback of $\Ter^{\delta, c}_{\calZ/\tilde{\calC}/\calH^{\delta,c}_{g,n}}$ to $\Hrig_{\vec{c}}$. We denote the pullbacks of $\tilde{\calC}, \calC,$ and $\calZ$ by parallel notation.
\end{definition}

There is an induced right $\mathfrak{S}_{\vec{c}}$ action on $\Terrig_{\vec{c}}$ such that $[\Terrig_{\vec{c}} / \mathfrak{S}_{\vec{c}}] \cong \Terunrig_{\vec{c}}$ and $\Terrig_{\vec{c}} \to \Hrig_{\vec{c}}$ is $\mathfrak{S}_{\vec{c}}$-equivariant.

After pulling back to $\Hrig_{\vec{c}}$, the universal closed subscheme $\Zrig_{\vec{c}}$ of $\Ctilrig_{\vec{c}}$ decomposes into a sum
of divisors
\[
  \Zrig_{\vec{c}} = \sum_{i = 1}^m c_i q_i.
\]
Since $\Zrig_{\vec{c}}$ is the vanishing of the conductor ideal, the $c_i$s are the branch conductances of the various branches of singularities of $\Crig_{\vec{c}}$. We write
\[
  \Qrig_{\vec{c}} = \sum_{i = 1}^m q_i
\]
for a reduced version of the conductor $\Zrig_{\vec{c}}$. Observe that the image of $q_i$ in $\tilde{\calC}_{\vec{c}}$ is reduced and coincides with $q_j$ if and only if $c_i = c_j$. Accordingly,
we write
\begin{equation} \label{eq:q_vec_c}
  \calQ_{\vec{c}} = \sum_{c \in \{ c_1, \ldots, c_m \}} q_c
\end{equation}
where $q_c$ is the common image of the $q_i$ with $c_i = c$. This gives us access to a reduced version of $\calZ_{\vec{c}}$ on $\calH_{\vec{c}}$.

\subsection{Construction of $\EE_\Gamma$}

\begin{definition} \label{def:e_gamma_moduli_functor}
Let $\Gamma$ be a connected $n$-marked combinatorial type of genus $g$ and invariants $\delta, c$. Let $\vec{c}$ be the vector of branch conductances of $\Gamma$.
We define the \emphbf{moduli stack of curves of type $\Gamma$}, $\EE_\Gamma$, as the substack of $\EE^{\delta,c}_{g,n}$ such that a family $\nu : \tilde{C} \to C$ of $n$-marked equinormalized curves of genus $g$ with invariants $\delta, c$ factors through $\EE_{\Gamma}$ if and only if
  \begin{enumerate}[label={(B\arabic*)}]
    \item The vanishing of the conductor $V(\Cond_{\tilde{C}/C}) \subseteq \tilde{C}$ factors through $\calH_{\vec{c}}$;
    \item For each $i,j$ such that $\mu(i) = \mu(j)$, we have $p_i = p_j$;
    \item For each $i$ with $\mu(i) \in B_\Gamma$, $p_i(S)$ factors through the subscheme $q_{c(\mu(i))}$;
    \item Each geometric fiber of $\nu$ has combinatorial type $\Gamma.$
  \end{enumerate}
\end{definition}

The goal of this section is to prove the following theorem.

\begin{theorem} \label{thm:stratification_by_gamma}
The stack $\EE_{\Gamma}$ is a locally closed substack of $\EE^{\delta,c}_{g,n}$. Moreover, $\EE^{\delta, c}_{g,n}$ is a disjoint union of the substacks $\EE_{\Gamma}$ as $\Gamma$ varies among the 
connected $n$-marked combinatorial types of genus $g$ and invariants $\delta, c$.
\end{theorem}

Recall that in Section \ref{sec:territories_singularities}, we showed that the territory of $A_{\vec{c}}$ decomposes into a disjoint union according to which points are glued. In order to show that a similar decomposition holds for $\Terrig_{\vec{c}}$ and therefore $\EE_{g,n}^{\delta,c}$, we must prove the following lemmas.

\begin{lemma} \label{lem:universal_subscheme_as_a_bundle}
The morphism $\calZ_{\vec{c}} \to \calH_{\vec{c}}$ is an \'etale local $\Spec A_{\vec{c}}$-bundle, i.e.,
there exists an \'etale cover $S \to \calH_{\vec{c}}$ and a cartesian diagram
\[
\begin{tikzcd}
  \calZ_{\vec{c}}|_S \ar[r] \ar[d] & \Spec A_{\vec{c}} \ar[d] \\
  S \ar[r] & \Spec \ZZ
\end{tikzcd}
\]
Moreover, $\Zrig_{\vec{c}} \to \Hrig_{\vec{c}}$ is an \'etale local $\Spec A_{\vec{c}}$-bundle in the same sense.
\end{lemma}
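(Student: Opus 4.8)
The plan is to trivialize the family after separating and ordering the $m$ support points of the universal conductor. First, since the definition of $\moduli_{\vec{c}}$ requires the sections $q_1,\ldots,q_m$ to have pairwise disjoint images, the group $\mathfrak{S}_{\vec{c}}$ acts freely on $\moduli_{\vec{c}}$, so the quotient map $\moduli_{\vec{c}}\to[\moduli_{\vec{c}}/\mathfrak{S}_{\vec{c}}]\cong H_{\vec{c}}$ is an $\mathfrak{S}_{\vec{c}}$-torsor, in particular a finite étale surjection. It therefore suffices to produce a Zariski cover $S\to\moduli_{\vec{c}}$ over which $\calZ^{\moduli}_{\vec{c}}\to\moduli_{\vec{c}}$ becomes isomorphic, as an $S$-scheme, to $\Spec(A_{\vec{c}}\otimes_{\ZZ}\OO_S)=S\times_{\Spec\ZZ}\Spec A_{\vec{c}}$; composing such an $S$ with $\moduli_{\vec{c}}\to H_{\vec{c}}$ then yields an étale cover of $H_{\vec{c}}$ of the required kind, since $\calZ^{H}_{\vec{c}}$ pulls back to $\calZ^{\moduli}_{\vec{c}}$.

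Second, I would invoke the decomposition $\calZ^{\moduli}_{\vec{c}}=\sum_{i=1}^{m}c_i q_i$ recorded just above the statement. As the $q_i$ have disjoint images, this divisor corresponds to the closed subscheme $\calZ^{\moduli}_{\vec{c}}=\bigsqcup_{i=1}^{m}\calZ_i$, where $\calZ_i\coloneqq V(\mathscr{I}_{q_i}^{c_i})$ is the $c_i$-th infinitesimal neighborhood of the section $q_i$ inside $\tilde{\calC}^{\moduli}_{\vec{c}}$; each $\calZ_i$ is set-theoretically supported on $q_i(\moduli_{\vec{c}})$ and finite locally free of rank $c_i$ over $\moduli_{\vec{c}}$. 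It then suffices to show that, Zariski-locally on $\moduli_{\vec{c}}$, each $\calZ_i$ is isomorphic to $\Spec(\OO_{\moduli_{\vec{c}}}[t]/(t^{c_i}))$ as an $\moduli_{\vec{c}}$-scheme; assembling these over $i$ gives the desired trivialization.

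Third, the computation for a single $\calZ_i$ runs as follows. Since $\tilde{\calC}^{\moduli}_{\vec{c}}\to\moduli_{\vec{c}}$ is smooth of relative dimension one, the section $q_i$ is a relative effective Cartier divisor, so $\mathscr{I}_{q_i}$ is invertible; after shrinking $\moduli_{\vec{c}}$ Zariski-locally I may assume there is an open $U\subseteq\tilde{\calC}^{\moduli}_{\vec{c}}$ containing $q_i(\moduli_{\vec{c}})$ on which $\mathscr{I}_{q_i}$ is generated by a single element $t_i$ (choose $U$ trivializing the invertible sheaf near a fiber of interest and shrink the base so that $q_i$ lands in $U$), and then $\calZ_i\subseteq U$ because $\calZ_i$ is supported on the closed set $q_i(\moduli_{\vec{c}})\subseteq U$. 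Now $t_i$ is a nonzerodivisor (the inclusion $(t_i)=\mathscr{I}_{q_i}|_U\subseteq\OO_U$ is locally free of rank one), $\OO_U/(t_i)$ pushes forward along $q_i$ to $\OO_{\moduli_{\vec{c}}}$, and multiplication by $t_i$ identifies each successive quotient $(t_i^{j})/(t_i^{j+1})$ of the filtration of $\OO_U/(t_i^{c_i})$ by powers of $(t_i)$ with $\OO_U/(t_i)$ for $0\le j<c_i$; hence $1,t_i,\ldots,t_i^{c_i-1}$ form an $\OO_{\moduli_{\vec{c}}}$-basis of $\pi_*\OO_{\calZ_i}=\pi_*(\OO_U/(t_i^{c_i}))$. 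The $\OO_{\moduli_{\vec{c}}}$-algebra map $\OO_{\moduli_{\vec{c}}}[t]/(t^{c_i})\to\pi_*\OO_{\calZ_i}$ sending $t\mapsto t_i$ is then a surjection of locally free modules of equal finite rank, hence an isomorphism, so $\calZ_i\cong\Spec(\OO_{\moduli_{\vec{c}}}[t]/(t^{c_i}))$; taking the disjoint union over $i$ completes the argument.

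The argument is essentially routine once set up, and the only points requiring care are structural rather than computational: that $\moduli_{\vec{c}}\to H_{\vec{c}}$ is genuinely étale (which relies on the disjointness of the sections $q_i$, so that $\mathfrak{S}_{\vec{c}}$ acts without fixed points), and that a generator $t_i$ of the invertible ideal $\mathscr{I}_{q_i}$ can be chosen on an open neighborhood of the entire section after a Zariski localization of $\moduli_{\vec{c}}$. Everything else is the standard fact, made explicit by the filtration above, that the $c$-th infinitesimal neighborhood of a section of a smooth relative curve is the constant family with fiber $\Spec\ZZ[t]/(t^{c})$.
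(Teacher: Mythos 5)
Your computational core is fine: once one has, over a \emph{scheme} base, an open neighborhood of the section $q_i$ on which $\mathscr{I}_{q_i}$ admits a single generator $t_i$, the filtration by powers of $(t_i)$ does identify the $c_i$-th infinitesimal neighborhood with $\Spec \OO_S[t]/(t^{c_i})$, and assembling over $i$ gives the trivialization; this is a reasonable (slightly more explicit) variant of the computation the paper does with \'etale charts $U_i \to \AA^1_S$. The gap is in the reduction step. You treat $\moduli_{\vec{c}}$ as if it were a scheme and assert that after ``shrinking $\moduli_{\vec{c}}$ Zariski-locally'' you can trivialize the invertible sheaf $\mathscr{I}_{q_i}$ on a neighborhood of the section. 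On the stack this genuinely fails: $\moduli_{\vec{c}}$ parametrizes arbitrary smooth pointed curves, so objects have nontrivial (even positive-dimensional) automorphism groups, and an automorphism fixing $q_i$ can act nontrivially on the conormal line $q_i^*\mathscr{I}_{q_i}$ --- for instance a genus-one component whose only special point is $q_i$, where the elliptic involution acts by $-1$. At such a point the line bundle $q_i^*\mathscr{I}_{q_i}$ is nontrivial on \emph{every} open substack containing it, so no Zariski-local generator of $\mathscr{I}_{q_i}$ along the section exists. Moreover, since $\moduli_{\vec{c}}$ is not Deligne--Mumford, there is no \'etale cover by a scheme either; only smooth covers are available. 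Consequently your plan of producing an \'etale map $S \to \moduli_{\vec{c}}$ over which the trivialization exists, and composing with $\moduli_{\vec{c}} \to H_{\vec{c}}$, cannot be carried out as stated: the cover over which you can actually trivialize is only smooth over $\moduli_{\vec{c}}$, and a smooth cover composed with the torsor map is not an \'etale cover of $H_{\vec{c}}$. This is exactly the point the paper's proof spends its first half on: trivialize over a smooth scheme cover $S \to \moduli_{\vec{c}}$, work \'etale-locally on $S$, and then use that smooth morphisms of algebraic stacks admit \'etale-local sections to descend to an honest \'etale cover of $H_{\vec{c}}$. Your argument is missing that last mechanism.

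Two smaller points. First, your justification that $\moduli_{\vec{c}} \to H_{\vec{c}}$ is \'etale because $\mathfrak{S}_{\vec{c}}$ ``acts freely'' (the $q_i$ being disjoint) is not quite right at the level of isomorphism classes --- an automorphism of the curve can realize a permutation of the $q_i$ --- but it is also unnecessary: $\moduli_{\vec{c}} \to [\moduli_{\vec{c}}/\mathfrak{S}_{\vec{c}}] \cong H_{\vec{c}}$ is an $\mathfrak{S}_{\vec{c}}$-torsor, hence finite \'etale, regardless of freeness. Second, even after passing to a scheme cover of the base, the total space of the universal curve is a priori only an algebraic space in this paper's setup, so trivializing $\mathscr{I}_{q_i}$ on a Zariski open $U$ of the total space is again optimistic; this is why the paper uses \'etale neighborhoods $U_i$ of $q_i(s)$ together with the infinitesimal lifting property, and your argument should be phrased \'etale-locally there as well (the filtration computation goes through unchanged).
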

\begin{proof}
Since $\Hrig_{\vec{c}} \to \calH_{\vec{c}}$ is an \'etale cover, it suffices to construct the required cartesian diagram on a cover of $\Hrig_{\vec{c}}$.
In addition, since smooth morphisms to algebraic stacks admit \'etale local sections, it suffices to do so on a smooth cover $S \to \Hrig_{\vec{c}}$. Begin by choosing $S$ an arbitrary smooth cover of $\Hrig_{\vec{c}}$ by a scheme. We may also reduce to finding, for each geometric point $s$ of $S$,
an \'etale neighborhood of $s$ on which $\calZ |_S \cong S \times \Spec A_{\vec{c}}$. Consider the points $q_1(s), \ldots, q_m(s)$. Since $q_i(s)$ is a point in the smooth locus of the family of curves $\tilde{\calC}|_S \to S$ it is standard that $\tilde{\calC}|_{S}$ is
\'etale locally isomorphic to $\AA^1_S$ around $q_i(s)$. More precisely, replacing $S$ by an affine \'etale neighborhood of $s$ if necessary, there exist maps
\[
\begin{tikzcd}
  & U_i \ar[dl,"g_i"'] \ar[dr, "f_i"] & \\
  \mathbb{A}^1_{S} \ar[dr] & & \tilde{\calC}|_S \ar[dl] \\
   & S &
\end{tikzcd}
\]
where $f_i : U_i \to \tilde{\calC}|_S$ is an \'etale neighborhood of $q_i(s)$ and the map $g_i$ is \'etale. Replacing $S$ by the \'etale neighborhood $U_i \times_{\tilde{\calC}|_S, q_i} S$, we may assume that the section $q_i$ of $\tilde{\calC}|_S \to S$ lifts to a section $\tilde{q}_i : S \to U_i$. Changing coordinates if necessary, we may assume that the induced section $g_i \circ \tilde{q}_i : S \to \AA^1_{S}$ is the zero section. Then it is clear that $c_i$ times the divisor of $(g_i \circ \tilde{q}_i)(S)$ is isomorphic to $\Spec \OO_S[t_i]/(t_i^{c_i})$. By the infinitesimal lifting property of \'etale maps, $c_i$ times the divisor of $(q_i)(S)$ is also isomorphic to $\Spec \OO_S[t_i]/(t_i^{c_i})$. Repeating the process for the other $q_i$, we obtain the result.
\end{proof}

\begin{corollary}
$\Terunrig_{\vec{c}} \to \calH_{\vec{c}}$ and $\Terrig_{\vec{c}} \to \Hrig_{\vec{c}}$ are \'etale local $\Ter^{\delta,c}_{A_{\vec{c}}}$-bundles.
\end{corollary}
\begin{proof}
This follows from Lemma \ref{lem:ter_base_change} and Theorem \ref{thm:territory_of_scheme_representable}.
\end{proof}

Recall that the territory $\Ter^{\delta,c}_{A_{\vec{c}}}$ decomposes into a disjoint
union of territories of gluing data $\Tergl(\calP, g, \vec{c})$ (Definition \ref{def:territory_gluing_datum}). We next show that this disjoint union decomposition extends from the fibers of
the bundle $\Terrig_{\vec{c}} \to \Hrig_{\vec{c}}$ to its total space.

\begin{lemma} \label{lem:territory_bundle_decomposition}
$\Terrig_{\vec{c}}$ admits a decomposition into disjoint open and closed substacks
\[
  \Terrig_{\vec{c}} = \bigsqcup_{(\calP, g)} T_{\calP, g}
\]
where the disjoint union is over all $m$-branch gluing data $(\calP, g)$ of corank $\delta$, and for each such $(\calP, g)$, the morphism $T_{\calP, g} \to \Hrig_{\vec{c}}$ is an \'etale local
$\Tergl(\calP, g, \vec{c})$-bundle.
\end{lemma}
\begin{proof}
Let $S \to \Hrig_{\vec{c}}$ be a cover as in Lemma \ref{lem:universal_subscheme_as_a_bundle}. To declutter the notation, we write $\tilde{\calC} \to \calC$ instead of $\Ctilrig_{\vec{c}} \to \Crig_{\vec{c}}$. Writing $\pi_1, \pi_2$ for the projections $S \times_{\Hrig_{\vec{c}}} S \to S$,
there is an isomorphism of equinormalized curves
\[
  \psi_{12} : \pi_1^*(\tilde{\calC}|_S \to \calC|_S) \to \pi_2^*(\tilde{\calC}|_S \to \calC|_S)
\]
coming from the fact that both are pullbacks of $\tilde{\calC} \to \calC$ along $S \times_{\Hrig_{\vec{c}}} S \to \Hrig_{\vec{c}}$.
This map preserves genera and branch multiplicities of singularities and fixes the $q_i$.

It follows that the induced cocycle
\[
 \phi_{12} : \pi_{1}^*\Ter^{\delta,c}_{A_{\vec{c}}} \to \pi_{2}^*\Ter^{\delta,c}_{A_{\vec{c}}}
\]
is compatible with the disjoint union decomposition of Definition \ref{def:territory_gluing_datum}, i.e., $\phi_{12}$ takes the summand indexed by $(\calP, g)$ to itself. Therefore this disjoint union decomposition descends to a
disjoint union decomposition of $\Terrig_{\vec{c}}$ into open and closed substacks $T_{\calP, g}$. By construction, we have an isomorphism over $S$
\[
  T_{\calP, g} \times_{\Hrig_{\vec{c}}} S \cong \Tergl(\calP, g, \vec{c}) \times S,
\]
so $T_{\calP, g}$ is an \'etale local $\Tergl(\calP, g, \vec{c})$-bundle.
\end{proof}

\begin{remark}
The same is \emph{not} true of $\Terunrig_{\vec{c}} \to \calH_{\vec{c}}$: the cocycle may interchange points $q_i$ with the same intersection multiplicity, which may interchange summands of the disjoint union. Consider for example, an irreducible rational curve
with two self nodes. The orbit of the curve under permuting the branches lands in three distinct summands: one with $q_1, q_2$ and $q_3, q_4$ glued, one with $q_1, q_3$ and $q_2, q_4$ glued, and one with $q_1, q_4$ and $q_2, q_3$ glued.
\end{remark}

\begin{definition} \label{def:rigid_type_of_point}
The \emphbf{rigidified combinatorial type of a geometric point $(\nu : \tilde{C} \to C, (p_i), (q_i))$ of $\Terrig_{\vec{c}}$} is the pair $(\Gamma, \beta)$ where $\Gamma$ is the combinatorial type of the underlying normalized curve and $\beta: \{ 1, \ldots, m \} \to B_\Gamma = \{ q_1, \ldots, q_m \}$ is the map $i \mapsto q_i$.

The \emphbf{rigidified $\tilde{C}$-type of a geometric point $(\tilde{C}, (p_i), (q_i))$ of $\Hrig_{\vec{c}}$} is the pair $(\Gtil, \beta)$ where $\Gtil$ is the $\tilde{C}$-type of $(\tilde{C}, (p_i), \sum_{i = 1}^m c_iq_i)$ and $\beta : \{1, \ldots, m\} \to B_{\Gtil} = \{ q_1, \ldots, q_m \}$ is the map $i \mapsto q_i$.
\end{definition}

Now we cut down to the part of $\Hrig_{\vec{c}}$ where $\tilde{C}$ and its markings are of the correct kind to have rigidified combinatorial type $(\Gamma, \beta)$.
Since rigidifying the branches also rigidifies the components (Lemma \ref{lem:isos_determined_by_branches}), it is enough to impose the conditions (A1)-(A5) below.

\begin{definition} \label{def:H_gamma_beta}
Let $\Gtil = (K, B, D, \tau, \mu, \epsilon, g, c)$ and let $(\Gtil, \beta)$ be a rigidified $\tilde{C}$-type. Write $\calH_{(\Gtil,\beta)}$ for the locally closed substack of $\Hrig_{\vec{c}}$ cut out by the conditions:
\begin{enumerate}[label={(A\arabic*)}]
  \item (Correct collisions of markings) For each $i,j \in \{1, \ldots, n\}$, if $\mu(i) = \mu(j)$, then $p_i = p_j$, otherwise $p_i \neq p_j$;
  \item (Correct collisions of markings and branches) For each $i \in \{1, \ldots, m\}, j \in \{1, \ldots, n\}$, if $\beta(i) = \mu(j)$, then $q_i = p_j$, otherwise $q_i \neq p_j$;
  \item (Correct adjacency of branches) For each $i,j \in \{1, \ldots, m\}$, $q_i$ and $q_j$ belong to the same connected component of each fiber of $\Ctilrig_{\vec{c}}$ if and only if $\tau(\beta(i)) = \tau(\beta(j))$;
  \item (Correct adjacency of markings and branches) For each $i \in \{1, \ldots, m\}, j \in \{1, \ldots, n\}$ such that $\mu(j) \in D$, we have that $q_i$ and $p_j$ belong to the same connected component of each geometric fiber of $\Ctilrig_{\vec{c}}$
  if and only if $\tau(\beta(i)) = \epsilon(\mu(j))$;
  \item (Correct genus of components) For each $i \in \{1, \ldots, m\}$, the connected component of $\Ctilrig_{\vec{c}}$ to which $q_i$ belongs has genus $g(\tau(\beta(i)))$. If $m = 0$, the unique connected component of $\tilde{C}$ has the same genus as $\Gtil$.
\end{enumerate}
\end{definition}

We may think of $\calH_{(\Gtil, \beta)}$ as a ``rigidified base space'' for a rigidification of $\EE_{\Gamma}$: it keeps track of the additional data of an ordering of the branch markings (and consequently how the components of $\tilde{C}$ match with $K(\Gamma)$), but is missing the subalgebra data encoded by territories which we need to construct the map $\tau : \tilde{C} \to C$.

Next, we consider the rigidification of $\EE_{\Gamma}$ itself. Like $\calH_{(\Gtil,\beta)}$ it keeps track of an ordering of the branch markings, but also possesses the subalgebra data. More concretely, using the disjoint union decomposition of Lemma \ref{lem:territory_bundle_decomposition}, we cut down to the part of $\Terrig_{\vec{c}}$ where the branches are glued to singularities as prescribed by $\Gamma$.

\begin{definition}
Let $\Gamma = (V = S \sqcup K, B, D, \sigma, \tau, \mu, \epsilon, g, c)$ and let $(\Gamma, \beta)$ be a rigidified combinatorial type. The \textbf{gluing datum $(\calP, g)$ associated to $(\Gamma, \beta)$} consists of
\begin{enumerate}
  \item $\calP$ is the partition of $\{1, \ldots, m\}$ whose parts are the preimages of singletons under the function $\sigma \circ \beta$;
  \item $g(P)$ is the genus of the singularity of $\Gamma$ corresponding to the part $P$ of $\calP$.
\end{enumerate}
\end{definition}

\begin{definition} \label{def:ter_gamma_beta}
Let $(\Gamma, \beta)$ be a rigidified combinatorial type and $(\calP, g)$ its associated gluing datum. Denote by $\Ter_{(\Gamma,\beta)}$ the locally closed substack $\calH_{(\Gtil, \beta)}|_{\Terrig_{\vec{c}}} \cap T_{\calP, g}$ of $\Terrig_{\vec{c}}$.
\end{definition}

\begin{remark} \label{rem:substacks_have_right_rigid_types} \hfill
\begin{enumerate}
  \item A geometric point of $\Hrig_{\vec{c}}$ has rigidified $\tilde{C}$-type isomorphic to $(\Gtil, \beta)$ if and only if it factors through $\calH_{(\Gtil,\beta)}$.
  \item A geometric point of $\Terrig_{\vec{c}}$ has rigidified combinatorial type isomorphic to $(\Gamma, \beta)$ if and only if it factors through $\Ter_{(\Gamma, \beta)}$.
\end{enumerate}
\end{remark}

At this point we would like to take the image of $\Ter_{(\Gamma,\beta)}$ in $\EE_{g,n}^{\delta,c}$. In order to get the correct stack structure on this image, we will construct it as a quotient stack. Thus, we consider the action of $\mathfrak{S}_{\vec{c}}$ on these substacks. Observe that given $\alpha \in \mathfrak{S}_{\vec{c}}$,
\[
  \calH_{(\Gtil, \beta)} \cdot \alpha = \calH_{(\Gtil, \beta \circ \alpha)}
\]
and
\[
  \Ter_{(\Gamma,\beta)} \cdot \alpha = \Ter_{(\Gamma, \beta \circ \alpha)}.
\]
Moreover, if the summand of the disjoint union decomposition of Lemma \ref{lem:territory_bundle_decomposition} containing $\Ter_{(\Gamma,\beta)}$ is indexed by a gluing datum $(\calP, g)$, then
the gluing datum of the summand to which $\Ter_{(\Gamma,\beta)} \cdot \alpha$ belongs is indexed by the gluing datum $(\calP, g) \cdot \alpha$ defined below.

\begin{definition}
We define an action of $\mathfrak{S}_{\vec{c}}$ on the set of $m$-branch gluing data of corank $\delta$ by $(\calP, g) \cdot \alpha = (\calQ, g')$ where $\calQ = \{ \alpha^{-1}(P) \mid P \in \calP \}$ and $g'(Q) = g(\alpha(Q))$ for each $Q \in \calQ$.
\end{definition}

We would like to think of the orbits
\[
  \calH_{(\Gtil, \beta)} \cdot \mathfrak{S}_{\vec{c}} = \bigcup_{\alpha \in \mathfrak{S}_{\vec{c}}} \calH_{(\Gtil, \beta)} \cdot \alpha \quad \text{and} \quad \Ter_{(\Gamma, \beta)} \cdot \mathfrak{S}_{\vec{c}} = \bigcup_{\alpha \in \mathfrak{S}_{\vec{c}}} \Ter_{(\Gamma, \beta)} \cdot \alpha
\]
as locally closed substacks of $\Hrig_{\vec{c}}$ and $\Terrig_{\vec{c}}$, respectively. In general, a finite union of locally closed
substacks need not be locally closed. Even when the locally closed substacks are disjoint with a locally closed union, their union need not
be isomorphic to the disjoint union of the substacks. We will now analyze the action of $\mathfrak{S}_{\vec{c}}$ to show that neither of these
pathologies occur in our situation.

\begin{lemma} \label{lem:action_on_rigid_types}
Let $\Gamma = (V = S \sqcup K, B, D, \sigma, \tau, \mu, \epsilon, g, c)$ and let $(\Gamma, \beta)$ be a rigidified combinatorial type.
\begin{enumerate}
  \item For $\alpha \in \mathfrak{S}_{\vec{c}}$, either $\calH_{(\Gtil, \beta)} \cdot \alpha = \calH_{(\Gtil, \beta)}$ or 
  $\calH_{(\Gtil, \beta)} \cdot \alpha \cap \calH_{(\Gtil, \beta)} = \emptyset.$
  \item For $\alpha \in \mathfrak{S}_{\vec{c}}$, either $\Ter_{(\Gamma, \beta)} \cdot \alpha = \Ter_{(\Gamma, \beta)}$ or 
  $\Ter_{(\Gamma, \beta)} \cdot \alpha \cap \Ter_{(\Gamma, \beta)} = \emptyset.$
  \item The group
  \[
    \tilde{G} = \{ \alpha \in \mathfrak{S}_{\vec{c}} \mid \calH_{(\Gtil, \beta)} \cdot \alpha = \calH_{(\Gtil, \beta)} \}
  \]
  is isomorphic to $\Aut(\Gtil).$
  \item The group
  \[
    G = \{ \alpha \in \mathfrak{S}_{\vec{c}} \mid \Ter_{(\Gamma, \beta)} \cdot \alpha = \Ter_{(\Gamma, \beta)} \}
  \]
  is isomorphic to $\Aut(\Gamma).$
\end{enumerate}
\end{lemma}

\begin{proof}
\textbf{(i):} Consider the conditions for membership in $\calH_{(\Gtil, \beta)} \cdot \alpha = \calH_{(\Gtil, \beta \circ \alpha)}$ assuming membership in $\calH_{(\Gtil,\beta)}.$ Whether each condition holds can be phrased solely in terms of $\Gamma$ and $\beta$. Explicitly:
\begin{enumerate}[label={(A\arabic*)}]
  \item Always holds.
  \item Holds if and only if $\beta(i) = \beta(\alpha(i))$ whenever $\beta(i)$ is in the image of $\mu$.
  \item Holds if and only if the equivalence relation induced by $\tau \circ \beta$ is the same as the equivalence relation induced by $\tau \circ \beta \circ \alpha$.
  \item Holds if and only if, for each $i$, $\tau(\beta(i)) = \tau(\beta(\alpha(i)))$ whenever $\tau(\beta(i))$ is in the image of $\epsilon$.
  \item Holds if and only if $g\circ \tau \circ \beta = g \circ \tau \circ \beta \circ \alpha$.
\end{enumerate}
It follows that the conditions for membership in $\calH_{(\Gtil, \beta)} \cdot \alpha$ and $\calH_{(\Gtil,\beta)}$ either agree for all curves or disagree for all curves.

\textbf{(ii):}
Let $(\calP, g)$ be the gluing datum associated to $(\Gamma, \beta)$. Then $\Ter_{(\Gamma, \beta)} \cdot \alpha = \Ter_{(\Gamma, \beta)}$ if and only if $\calH_{(\Gtil, \beta)} \cdot \alpha = \calH_{(\Gtil, \beta)}$ and $(\calP, g) \cdot \alpha = (\calP, g)$. Otherwise $\Ter_{(\Gamma, \beta)} \cdot \alpha \cap \Ter_{(\Gamma, \beta)} = \emptyset$.

\textbf{(iii):} If $B$ is empty, then $\Gamma$ has a single vertex, so both $\tilde{G}$ and $\Aut(\Gtil)$ are trivial groups, and the result holds trivially.

Assume that $B$ contains at least one element. Let $\alpha \in \tilde{G}$. We will define a morphism $\phi_{\alpha} : \Gtil \to \Gtil$ associated to $\alpha.$ Let $\phi_{\alpha,B} : B \to B$ be defined by $\beta(i) \mapsto \beta(\alpha(i))$,
which is well-defined since $\beta$ is a bijection.
Define $\phi_{\alpha,K} : K \to K$ by $\tau(\beta(i)) \mapsto \tau(\beta(\alpha(i)))$. This is well-defined and bijective by (A3) above. Define $\phi_{\alpha,D} : D \to D$ by the identity map. It remains to check that these choices commute with $\tau, \epsilon, \mu, g, c.$
We have commutativity with $\tau$ by construction. Commutativity with $\epsilon$ follows from (A4) above. Commutativity with $\mu$ is automatic. We have commutativity with $g$ by (A5) above. Finally, commutativity with $c$ holds since $\alpha$ preserves conductances.
Therefore $\phi_\alpha$ is an isomorphism of $\Gtil,$ as desired.

It is clear that the map $\xi : \tilde{G} \to \Aut(\Gtil)$ given by $\alpha \mapsto \phi_{\alpha}$ is a group homomorphism. We claim that the function $\eta : \Aut(\Gtil) \to \tilde{G}$ given by $\phi \mapsto \beta^{-1} \circ \phi_B \circ \beta$ is an inverse group homomorphism. It is well-defined since $\phi_B$ preserves branch conductances, and it is clear that $\eta$ is a group homomorphism. The composite $\eta \circ \xi : \tilde{G} \to \tilde{G}$ is clearly the identity. The other composite $\xi \circ \eta : \Aut(\Gtil) \to \Aut(\Gtil)$ is the identity since definition chasing shows $(\xi \circ \eta)(\phi)_B = \phi_B$ and automorphisms of $\tilde{C}$-types are determined by their restriction to $B$.

\textbf{(iv):} As before, if $B$ is empty the result is immediate.

Assume $B$ contains at least one element. Then as above, from $\alpha \in G$ we will construct a morphism $\phi_{\alpha} : \Gamma \to \Gamma$. Let $(\calP, g)$ be the gluing datum associated to $(\Gamma, \beta)$. Since $\alpha \in G$, we have $(\calP, g) \cdot \alpha = (\calP, g)$. This implies that $\sigma \circ \beta$ induces the same equivalence relation as $\sigma \circ \beta \circ \alpha$ and $g|_S \circ \sigma \circ \beta = g|_S \circ \sigma \circ \beta \circ \alpha$. We may construct $\phi_{\alpha,B}, \phi_{\alpha,K}, \phi_{\alpha,D}$ as before. We define $\phi_{\alpha,S} : S \to S$ by $\sigma(\beta(i)) \mapsto \sigma(\beta(\alpha(i)))$. This is well-defined since $\sigma \circ \beta$ induces the same equivalence relation as $\sigma \circ \beta \circ \alpha$. Commutativity of $\phi$ with $\tau, \epsilon, \mu, g|_K, c$
is as above. Commutativity of $\phi_{\alpha,S}$ with $\sigma$ holds by definition. Commutativity of $\phi_{\alpha,S}$ with $g|_S$ holds since $g|_S \circ \sigma \circ \beta = g|_S \circ \sigma \circ \beta \circ \alpha$.

The remainder of the proof proceeds as in the proof of (iii).
\end{proof}

\begin{corollary}
If $\alpha \in \mathfrak{S}_{\vec{c}}$ and $\calH_{(\Gtil, \beta)} \cdot \alpha \cap \calH_{(\Gtil, \beta)} = \emptyset$,
then the closures of $\calH_{(\Gtil, \beta)} \cdot \alpha$ and $\calH_{(\Gtil, \beta)}$ in $\Hrig_{\vec{c}}$ are also disjoint.

Similarly, if $\Ter_{(\Gamma, \beta)} \cdot \alpha \cap \Ter_{(\Gamma, \beta)} = \emptyset$, then the closures of
$\Ter_{(\Gamma, \beta)} \cdot \alpha$ and $\Ter_{(\Gamma, \beta)}$ in $\Terrig_{\vec{c}}$ are also disjoint.
\end{corollary}

\begin{proof}
We will just prove the result for $\Ter_{(\Gamma,\beta)}$, as the proof is analogous for $\calH_{(\Gtil, \beta)}$. To lighten the
notation, we will write $T = \Ter_{(\Gamma, \beta)}$.

Consider the closed substack $\tilde{T}$ of $\Terrig_{\vec{c}}$ given by conditions (A3) -- (A5), factorization through $T_{\calP, g}$, and the following weakenings of (A1) and (A2):
\begin{enumerate}[label={(A\arabic*$'$)}]
  \item For each $i,j \in \{1, \ldots, n\}$, if $\mu(i) = \mu(j)$, then $p_i = p_j$;
  \item For each $i \in \{1, \ldots, m\}, j \in \{1, \ldots, n\}$, if $\beta(i) = \mu(j)$, then $q_i = p_j$.
\end{enumerate}
Then clearly $\ol{T} \subseteq \tilde{T}$ and $\ol{T \cdot \alpha} \subseteq \tilde{T} \cdot \alpha,$ so it suffices to show that either
$T = T \cdot \alpha$ or $\tilde{T} \cap \tilde{T} \cdot \alpha = \emptyset.$ Suppose $(\nu : \tilde{C} \to C, (p_i), (q_j))$ is a point of $\tilde{T} \cap \tilde{T} \cdot \alpha.$ Then, since the construction of $\phi_{\alpha}$ in the proof of part (iv) above does not depend on axioms (A1) and (A2),
we can construct an associated map $\phi_{\alpha} : \Gamma \to \Gamma$ as in the proof of part (iv). Applying the map $\Aut(\Gamma) \to G$ to $\phi_\alpha$, we conclude that $\alpha \in G$. Therefore $T = T \cdot \alpha$.
\end{proof}

\begin{corollary}
The algebraic stacks
    \begin{align*}
      \calH_{(\Gtil, \beta)} \cdot \mathfrak{S}_{\vec{c}} &\coloneqq \bigsqcup_{[\alpha] \in \Aut(\Gtil)\setminus\mathfrak{S}_{\vec{c}}} \calH_{(\Gtil, \beta)} \cdot \alpha \\
      \text{ and } \Ter_{(\Gamma,\beta)} \cdot \mathfrak{S}_{\vec{c}} &\coloneqq \bigsqcup_{[\alpha] \in \Aut(\Gamma) \setminus \mathfrak{S}_{\vec{c}}} \Ter_{(\Gamma, \beta)} \cdot \alpha 
    \end{align*}
    have natural structures of locally closed substacks of $\Hrig_{\vec{c}}$ and $\Terrig_{\vec{c}}$, respectively.
\end{corollary}

\begin{proof}
By the previous lemmas and corollary, $\calH_{(\Gtil, \beta)} \cdot \mathfrak{S}_{\vec{c}}$ is a union of locally closed substacks with disjoint closures.
Given a class $[\alpha]$ in the set of right cosets $\Aut(\Gtil)\setminus\mathfrak{S}_{\vec{c}}$, write
\[
  U_{[\alpha]} = \Hrig_{\vec{c}} \setminus \bigcup_{[\gamma] \neq [\alpha]} \ol{\calH_{(\Gtil, \beta)} \cdot \gamma}.
\]
Then the $U_{[\alpha]}$s form an open cover of $\Hrig_{\vec{c}}$ on which the natural map $\calH_{(\Gtil, \beta)} \cdot \mathfrak{S}_{\vec{c}} \to \Hrig_{\vec{c}}$ restricts to an immersion, so by Zariski descent, $\calH_{(\Gtil, \beta)} \cdot \mathfrak{S}_{\vec{c}} \to \Hrig_{\vec{c}}$ is an immersion.

The proof for $\Ter_{(\Gamma, \beta)} \cdot \mathfrak{S}_{\vec{c}}$ is similar.
\end{proof}

We now have enough facts in place to define $\moduli_{\Gtil}$ and $\EE_\Gamma$ constructively. We will prove at the end of the section that this definition of $\EE_\Gamma$ agrees with Definition \ref{def:e_gamma_moduli_functor} given at the beginning
of the section. Until we have established the equivalence, $\EE_\Gamma$ will refer to the following definition:

\begin{definition} \label{def:strata_by_quotients}
Let $(\Gamma, \beta)$ be a rigidified combinatorial type. We define
\[
  \moduli_{\Gtil} = [ (\calH_{(\Gtil, \beta)} \cdot \mathfrak{S}_{\vec{c}}) / \mathfrak{S}_{\vec{c}}] \quad \text{and} \quad \EE_\Gamma = [ (\Ter_{(\Gamma, \beta)} \cdot \mathfrak{S}_{\vec{c}}) / \mathfrak{S}_{\vec{c}} ].
\]
\end{definition}

\begin{lemma} \label{lem:e_gamma_m_gamma_basic_facts}
The following basic facts hold.
\begin{enumerate}
  \item $\moduli_{\Gtil}$ and $\EE_\Gamma$ do not depend on the choice of $\beta$.
  \item $\moduli_{\Gtil}$ is naturally a locally closed substack of $\calH_{\vec{c}}$, and therefore of $\calH^{\delta,c}_{g,n}$.
  \item $\EE_\Gamma$ is naturally a locally closed substack of $\Terunrig_{\vec{c}}$.
  \item A geometric point of $\calH_{\vec{c}}$ factors through $\moduli_{\Gtil}$ if and only if its $\tilde{C}$-type is $\Gtil$.
  \item A geometric point of $\Terunrig_{\vec{c}}$ factors through $\EE_\Gamma$ if and only if its combinatorial type is $\Gamma$.
\end{enumerate}
\end{lemma}
\begin{proof}
We will provide the proofs for the claims about $\EE_\Gamma$. The proofs of the claims regarding $\moduli_{\Gtil}$ are similar.

\textbf{(i):} Suppose $\beta'$ is a second choice of bijection $\{1, \ldots, m \} \to B$ such that $c(\beta'(i)) = c_i$ for all $i$. Then $\alpha = \beta^{-1} \circ \beta' \in \mathfrak{S}_{\vec{c}}$. This implies that
the orbits of $\Ter_{(\Gamma, \beta)}$ and $\Ter_{(\Gamma, \beta')} = \Ter_{(\Gamma, \beta \circ \alpha)}$ coincide, so the definition of $\EE_\Gamma$ is unchanged.

\textbf{(iii):} The category of $S$-points of $\EE_\Gamma$ is the category of right $\mathfrak{S}_{\vec{c}}$ torsors $P \to S$ with an equivariant map $P \to \Ter_{(\Gamma, \beta)} \cdot \mathfrak{S}_{\vec{c}}$. We obtain a map from $\EE_\Gamma$ to $\Terunrig_{\vec{c}} = [\Terrig_{\vec{c}} / \mathfrak{S}_{\vec{c}}]$ by composing $P \to \Ter_{(\Gamma, \beta)} \cdot \mathfrak{S}_{\vec{c}}$ with the locally
closed immersion $\Ter_{(\Gamma, \beta)} \cdot \mathfrak{S}_{\vec{c}} \to \Terrig_{\vec{c}}.$

The base change of the resulting morphism $\EE_\Gamma \to \Terunrig_{\vec{c}}$ to $\Terrig_{\vec{c}}$ is the same locally closed immersion $\Ter_{(\Gamma, \beta)} \cdot \mathfrak{S}_{\vec{c}} \to \Terrig_{\vec{c}}$. Therefore, since $\Terrig_{\vec{c}} \to \Terunrig_{\vec{c}}$ is an \'etale cover, $\EE_\Gamma \to \Terunrig_{\vec{c}}$ is also a locally closed immersion.

\textbf{(v):} Suppose $(\nu : \tilde{C} \to C, (p_i), \sum_{i=1}^m c_iq_i)$ is a geometric point of $\Terunrig_{\vec{c}}$. Let $(\nu : \tilde{C} \to C, (p_i), (q_i))$ be a lift to $\Terrig_{\vec{c}}$. By Remark \ref{rem:substacks_have_right_rigid_types},
this lift has combinatorial type $\Gamma$ if and only if it factors through $\Ter_{(\Gamma, \beta')}$ for some $\beta'$, or equivalently, if it factors through $\Ter_{(\Gamma, \beta)} \cdot \mathfrak{S}_{\vec{c}}$. This is equivalent to saying
that the original geometric point of $\Terunrig_{\vec{c}}$ factors through $\EE_\Gamma$, as desired.
\end{proof}

Finally, we give the proof of Theorem \ref{thm:stratification_by_gamma}, by first showing that $\EE_\Gamma$ represents the moduli functor with which we originally defined $\EE_\Gamma$, then deducing from our construction that $\EE_\Gamma$ is a locally closed substack
of $\EE^{\delta,c}_{g,n}$.

\begin{proof}
Consider $\EE_\Gamma$ as a locally closed substack of $\Terunrig_{\vec{c}}$ by part (iii) of Lemma \ref{lem:e_gamma_m_gamma_basic_facts}. Write $\mathcal{F}$ for the stack of Definition \ref{def:e_gamma_moduli_functor}.
Given a family in $\EE_\Gamma$, it is clear that the conditions (B1)--(B4) hold. Conversely, given a family in $\mathcal{F}$, then by axiom (B1) it factors through $\Terunrig_{\vec{c}}$.
Conditions (B2) and (B3) cut out a closed substack of $\Terunrig_{\vec{c}}$ in which $\EE_{\Gamma}$ is open, since these pull back to the only closed conditions used to cut out $\Ter_{(\Gamma,\beta)}$ in $\Terrig_{\vec{c}}$, namely the conditions (A1$'$) that if $\mu(i) = \mu(j)$, then $p_i = p_j$, and (A2$'$) if $\beta(i) = \mu(j)$, then $q_i = p_j$.
Further factorization through $\EE_{\Gamma}$ is characterized solely by membership of geometric points, and we conclude $\EE_\Gamma = \mathcal{F}$ by Lemma \ref{lem:e_gamma_m_gamma_basic_facts}(v).

That $\EE_\Gamma$ is locally closed in $\EE^{\delta,c}_{g,n}$ is immediate from parts (iii) and (v) of Lemma \ref{lem:e_gamma_m_gamma_basic_facts} and the fact that $\Terunrig_{\vec{c}}$ is locally closed
in $\EE^{\delta,c}_{g,n}$. The statement about $\EE^{\delta,c}_{g,n}$ being a disjoint union of the substacks $\EE_\Gamma$ is clear, as each geometric point possesses exactly one combinatorial type.
\end{proof}

\subsection{$\EE_\Gamma$ is a fiber bundle}

The goal of this section is to prove the following.

\begin{theorem} \label{thm:e_gamma_fiber_bundle}
Let $\Gamma$ be a connected combinatorial type with conductances $\vec{c}$. Choose a rigidification $(\Gamma, \beta)$, and let $(\mathcal{P},g)$ be the gluing datum associated to $(\Gamma, \beta)$. Then there is an \'etale fiber bundle $\EE_{\Gamma} \to \moduli_\Gamma$ with fiber $\Tergl(\mathcal{P}, g, \vec{c})$
over
\[
  \moduli_\Gamma \coloneqq \left[ \left(\prod_{v \in K} \moduli_{g(v), n(v) + m(v)}\right) / \Aut(\Gamma) \right]
\]
where $n(v)$ denotes the number of distinguished points adjacent to $v$ and $m(v)$ denotes the number of branches incident to $v$. 
\end{theorem}

\begin{remark}
We have $\sum_{v \in K} n(v) \leq n$ with equality precisely when none of the markings lie on singularity branches. The sum $\sum_{v \in K} m(v)$ is always the number of branches.
\end{remark}

We will also show that the $\EE_{\Gamma}$s form \'etale fiber bundles over
the spaces $\moduli_{\Gtil}$. Unlike the spaces $\moduli_{\Gamma}$, the spaces $\moduli_{\Gtil}$ are disjoint locally closed substacks of the common base space $\calH^{\delta,c}_{g,n}$. In exchange for living in a common space, the fiber of the bundle is more complex.

\begin{theorem} \label{thm:e_gamma_meh_fiber_bundle}
With notation as in the previous theorem, there is an \'etale fiber bundle $\EE_{\Gamma} \to \moduli_{\Gtil}$ with fiber
\[
  \bigsqcup_{\Aut(\Gamma) \setminus \Aut(\Gtil)} \Tergl(\mathcal{P}, g, \vec{c})
\]
over
\[
  \moduli_{\Gtil} \cong \left[ \left(\prod_{v \in K} \moduli_{g(v), n(v) + m(v)}\right) / \Aut(\Gtil) \right]
\]
where $n(v)$ denotes the number of distinguished points adjacent to $v$ and $m(v)$ denotes the number of branches incident to $v$. 
\end{theorem}

Before we prove the theorems, we will show that $\moduli_{\Gtil} = [ (\calH_{(\Gtil, \beta)} \cdot \mathfrak{S}_{\vec{c}}) / \mathfrak{S}_{\vec{c}}]$ admits Theorem \ref{thm:e_gamma_meh_fiber_bundle}'s simplified description and that $\EE_{\Gamma} = [ \Ter_{(\Gamma, \beta)} \cdot \mathfrak{S}_{\vec{c}} / \mathfrak{S}_{\vec{c}}]$ admits a similarly simplified description as a quotient by $\Aut(\Gamma)$.

\begin{lemma} \label{lem:moduli_gamma_as_prod_of_mgns} \hfill
\begin{enumerate}
  \item We have
  \[
    \moduli_{\Gtil} \cong \left[\left(\prod_{v \in K_\Gamma} \moduli_{g(v), n(v) + m(v)}\right) / \Aut(\Gtil) \right].
  \]
  where $n(v)$ denotes the number of distinguished points adjacent to $v$ and $m(v)$ denotes the number of branches incident to $v$.
  \item Similarly,
  \[
    \EE_{\Gamma} \cong [ \Ter_{(\Gamma, \beta)} / \Aut(\Gamma) ].
  \]
\end{enumerate}

\end{lemma}

\begin{proof}
By definition,
\[
  \calH_{(\Gtil, \beta)} \cdot \mathfrak{S}_{\vec{c}} = \bigsqcup_{[\alpha] \in \Aut(\Gtil)\setminus\mathfrak{S}_{\vec{c}}} \calH_{(\Gtil, \beta)} \cdot \alpha.
\]
Let $\moduli_0$ denote the summand $\calH_{(\Gtil, \beta)} \cdot \id$. Observe that
\[
  \moduli_{0} \cong \prod_{v \in K_\Gamma} \moduli_{g(v), n(v) + m(v)}.
\]
By Lemma \ref{lem:action_on_rigid_types}, there is a natural action
of $\Aut(\Gtil)$ on $\calH_{(\Gtil, \beta)}$.
We will define an equivalence of fibered categories $\moduli_{\Gtil} \cong [ (\calH_{(\Gtil, \beta)} \cdot \mathfrak{S}_{\vec{c}}) / \mathfrak{S}_{\vec{c}}] \to [ \moduli_0 / \Aut(\Gtil) ]$.

By definition, an $S$-point of $\moduli_{\Gtil}$ consists of an $\mathfrak{S}_{\vec{c}}$ torsor $p : P \to S$ and an equivariant map
$f : P \to \calH_{(\Gtil, \beta)} \cdot \mathfrak{S}_{\vec{c}}$. Let $P_0 = f^{-1}(\moduli_0)$. By Lemma \ref{lem:action_on_rigid_types}(iii), $p|_{P_0} : P_0 \to S$ is an $\Aut(\Gtil)$-torsor. The restriction $f|_{P_0} : P_0 \to \moduli_0$ is clearly equivariant. Thus, $S \leftarrow P_0 \to \moduli_0$ is a point of $[\moduli_0 / \Aut(\Gtil) ]$. The assignment taking $S \leftarrow P \to \calH_{(\Gtil, \beta)} \cdot \mathfrak{S}_{\vec{c}}$ to $S \leftarrow P_0 \to \moduli_0$ is clearly functorial and respects pullbacks in $S$, so defines a morphism of fibered
categories.

Next, we construct a quasi-inverse. Suppose $S \overset{p_0}{\leftarrow} P_0 \overset{f_0}{\to} \moduli_0$ is a point of $[\moduli_0 / \Aut(\Gtil)]$. Form the contracted products
\[
  P_0 \wedge^{\Aut(\Gtil)} \mathfrak{S}_{\vec{c}} \coloneqq P_0 \times \mathfrak{S}_{\vec{c}} / (p\alpha, \gamma) \sim (p, \alpha\gamma)
\]
and
\[
  \moduli_0 \wedge^{\Aut(\Gtil)} \mathfrak{S}_{\vec{c}} \coloneqq \moduli_0 \times \mathfrak{S}_{\vec{c}} / (x\alpha, \gamma) \sim (x, \alpha\gamma).
\]
The space $P_0 \wedge^{\Aut(\Gtil)} \mathfrak{S}_{\vec{c}}$ has a natural right $\mathfrak{S}_{\vec{c}}$-torsor structure given by $[(p,\gamma)] \cdot \alpha = [(p, \gamma\alpha)]$. Similarly, $\moduli_0 \wedge^{\Aut(\Gtil)} \mathfrak{S}_{\vec{c}}$ has a natural right $\mathfrak{S}_{\vec{c}}$-action given by the same formula.
There is a natural equivariant isomorphism $\moduli_0 \wedge^{\Aut(\Gtil)} \mathfrak{S}_{\vec{c}} \to \calH_{(\Gtil, \beta)} \cdot \mathfrak{S}_{\vec{c}}$ given by $[(x,\alpha)] \mapsto x\cdot \alpha$ and a natural equivariant map $P_0 \wedge^{\Aut(\Gtil)} \mathfrak{S}_{\vec{c}} \to \moduli_0 \wedge^{\Aut(\Gtil)} \mathfrak{S}_{\vec{c}}$ given by $[(x, \alpha)] \mapsto [(f(x), \alpha)].$
As contracted products are functorial, the assignment
\[
\begin{tikzcd}
  P_0 \ar[d] \ar[r] & \moduli_0 \\
  S
\end{tikzcd} \mapsto \begin{tikzcd}
  P_0 \wedge^{\Aut(\Gtil)} \mathfrak{S}_{\vec{c}} \ar[r] \ar[d] & \calH_{(\Gtil,\beta)} \cdot \mathfrak{S}_{\vec{c}} \\
  S
\end{tikzcd}
\]
extends to a morphism of fibered categories $[\moduli_0 / \Aut(\Gtil)] \to \moduli_{\Gtil}$. We leave it to the reader to check that the two
given maps are quasi-inverses to each other.

The claim about $\EE_\Gamma$ follows by identical reasoning.
\end{proof}

Now we may prove the main theorems of the section.

\begin{proof}[Proof of Theorem \ref{thm:e_gamma_fiber_bundle}.]
By Lemma \ref{lem:action_on_rigid_types}, $\calH_{(\Gtil,\beta)}$ admits an action by $\Aut(\Gtil)$, hence also an action by $\Aut(\Gamma)$, since this is a subgroup. By the same Lemma, $\Ter_{(\Gamma,\beta)}$ admits an action by $\Aut(\Gamma)$. In both cases the $\Aut(\Gamma)$ action is induced by considering $\Aut(\Gamma)$ as a subgroup of $\mathfrak{S}_{\vec{c}}$ in a way determined by the rigidification $\beta$. Since the map $\Terrig_{\vec{c}} \to \Hrig_{\vec{c}}$ is $\mathfrak{S}_{\vec{c}}$ equivariant, the induced map $\Ter_{(\Gamma, \beta)} \to \calH_{(\Gtil, \beta)}$ is $\Aut(\Gamma)$-equivariant.

Thus, there is a cartesian square \cite[Tag 04ZN]{stacks-project}
\[
\begin{tikzcd}
 \Ter_{(\Gamma, \beta)} \ar[r] \ar[d] & {}[\Ter_{(\Gamma, \beta)} / \Aut(\Gamma)] \ar[d] \\
  \calH_{(\Gtil, \beta)} \ar[r] & {}[\calH_{(\Gtil, \beta)} / \Aut(\Gamma)].
\end{tikzcd}
\]
By Definition \ref{def:ter_gamma_beta} and Lemma \ref{lem:territory_bundle_decomposition}, the left arrow is a $\Tergl(\calP, g, \vec{c})$-bundle. As the bottom arrow is \'etale, the right arrow is also a $\Tergl(\calP, g, \vec{c})$-bundle.
By Lemma \ref{lem:moduli_gamma_as_prod_of_mgns}, $\EE_{\Gamma} \cong [\Ter_{(\Gamma, \beta)} / \Aut(\Gamma)]$. Clearly, $\calH_{(\Gtil, \beta)} \cong \prod_{v \in K_\Gamma} \moduli_{g(v), n(v) + m(v)},$ so
\[
  [\calH_{(\Gtil, \beta)} / \Aut(\Gamma)] \cong \left[\prod_{v \in K_\Gamma} \moduli_{g(v), n(v) + m(v)} / \Aut(\Gamma)\right] = \moduli_\Gamma.
\]
The result follows.
\end{proof}

Now we prove the second main theorem of this section.

\begin{proof}[Proof of Theorem \ref{thm:e_gamma_meh_fiber_bundle}.]
We must show that the natural map $\EE_\Gamma \to \moduli_{\Gtil}$ is an \'etale
\[
  \bigsqcup_{\Aut(\Gamma) \setminus \Aut(\Gtil)} \Tergl(\calP, g, \vec{c})
\]
bundle.

It suffices to show the claim \'etale locally, so we may reduce to showing the same fact for the base change of the map $\EE_\Gamma \to \moduli_{\Gtil}$ along $\calH_{(\Gtil, \beta)} \cdot \mathfrak{S}_{\vec{c}} \to \moduli_{\Gtil}$, namely $\Ter_{(\Gamma, \beta)} \cdot \mathfrak{S}_{\vec{c}} \to \calH_{(\Gtil, \beta)} \cdot \mathfrak{S}_{\vec{c}}.$ By definition, the source and target of this map can be rewritten as disjoint unions over right cosets
\[
  \bigsqcup_{[\alpha] \in \Aut(\Gamma) \setminus \mathfrak{S}_{\vec{c}}} \Ter_{(\Gamma, \beta \circ \alpha)} \longrightarrow \bigsqcup_{[\alpha] \in \Aut(\Gtil) \setminus \mathfrak{S}_{\vec{c}}} \calH_{(\Gtil, \beta \circ \alpha)}.
\]

Since $\mathfrak{S}_{\vec{c}}$ acts equivariantly on the source and target and transitively on $\Aut(\Gtil) \setminus \mathfrak{S}_{\vec{c}}$, it suffices to check the result over the summand indexed by $\alpha = \id$. The pullback over this summand is
\[
  \bigsqcup_{[\alpha] \in \Aut(\Gamma) \setminus \Aut(\Gtil)} \Ter_{(\Gamma, \beta \circ \alpha)} \to \calH_{(\Gtil, \beta)}.
\]
Finally, recall from Definition \ref{def:ter_gamma_beta} and Lemma \ref{lem:territory_bundle_decomposition} that each $\Ter_{(\Gamma, \beta \circ \alpha)}$ is a $\Tergl(\calP, g, \vec{c})$-bundle over $\calH_{(\Gtil, \beta)}.$
\end{proof}

\begin{example}
Consider the combinatorial type $\Gamma$ of Example \ref{ex:comb_type_of_prestable}. By Example \ref{ex:comb_type_of_prestable} and \ref{ex:c_tilde_type}, we may identify the inclusion $\Aut(\Gamma) \leq \Aut(\Gtil)$ with the inclusion $\langle (1324), (12) \rangle \leq S_4$.
This subgroup has 3 cosets. It is not hard to check that $\Tersing(0, (1,1)) \cong \Spec \ZZ$. Applying Theorem \ref{thm:e_gamma_meh_fiber_bundle}, the geometric fibers of $\EE_\Gamma \to \moduli_{\Gtil}$ are a disjoint union of 3 points. This agrees with our computation of the fiber over $H_{1,1,1,1}$ in the extended example of Section \ref{sec:territory_colliding_sings}. 
\end{example}

The next lemma is a computation to support Example \ref{ex:stable_equinormalized_curves} below. We can think of it as a moduli-theoretic enhancement of Lemma \ref{lem:subalges_of_kn}.

\begin{lemma} \label{lem:territory_of_points}
There is an isomorphism
\[
  \Ter^\delta_{\ZZ^m} \cong \bigsqcup_{\mathcal{P}} \Spec \ZZ
\]
where the disjoint union is over partitions $\mathcal{P}$ of $\{1, \ldots, m\}$ such that $\sum_{P \in \calP} (|P| - 1) = \delta.$
In particular, $\Tergl(\mathcal{P}, 0,(1,\ldots,1)) = \Spec \ZZ$.
\end{lemma}
\begin{proof}
We perform a computation in local coordinates using the description of $\Ter^{\delta}_{\ZZ^m}$ as a closed subscheme of $\Grass(m - \delta, \OO_{\Spec \ZZ}^m)$ from the proof of Theorem \ref{thm:territory_representable}.

Take $e_1, \ldots, e_m$ as the standard basis of $\OO^m$. Work in the affine patch $U = \Spec \ZZ[a_{i,j} : i = 1, \ldots, \delta; j = 1, \ldots, m - \delta]$ of $\Grass(m - \delta, \OO_{\Spec \ZZ}^m)$ representing the rank $m - \delta$ subsheaves of $\OO^m$ with basis $\vec{b}_1, \ldots, \vec{b}_{m - \delta}$ given by the columns of the $m \times (m - \delta)$ matrix
\[
  \begin{bmatrix} \vec{b}_1 & \cdots & \vec{b}_{m - \delta}\end{bmatrix} = \begin{bmatrix}
      \mathbb{I}_{m - \delta} \\
      A
  \end{bmatrix}
\]
where $A = [a_{i,j}]_{1 \leq i \leq \delta, 1 \leq j \leq m - \delta}$.
The cokernel of the inclusion of the universal sub-bundle $\mathcal{U} \to \OO^m$ over this locus is represented by the $\delta \times m$ matrix
\[
  \begin{bmatrix}
       A & -\mathbb{I}_{\delta} 
  \end{bmatrix}
\]

Now, the algebra structure on $\OO_{\Spec \ZZ}^m$ is represented by the linear map
\begin{align*}
  \OO_{\Spec \ZZ}^m \otimes \OO_{\Spec \ZZ}^m &\to \OO_{\Spec \ZZ}^m \\
    e_i \otimes e_j &\mapsto \begin{cases}
        e_i & \text{ if } i = j \\
        0 & \text{ if } i \neq j.
    \end{cases}
\end{align*}

It follows that the induced multiplication on the subbundle $\mathcal{U}$ is represented by
\begin{align*}
  \mathcal{U} \otimes \mathcal{U} &\to \OO_{U}^m \\
   \vec{b}_i \otimes \vec{b}_j &\mapsto \begin{cases}
       e_i + \sum_{k = 1}^{\delta} a_{k,i}^2 e_{k + m - \delta} & \text{ if } i = j\\
       \sum_{k = 1}^{\delta} a_{k,i}a_{k,j} e_{k + m - \delta} & \text{ if }i \neq j.
   \end{cases}
\end{align*}

In order for this multiplication to factor through $\mathcal{U}$, we need the composite with the quotient map $\OO^m \to \OO^m/\mathcal{U}$ to vanish.
That is, multiplying with the matrix $\begin{bmatrix} A & -\mathbb{I}_{\delta} \end{bmatrix}$, we need the equations
\[
  a_{k,i} - a_{k,i}^2 = 0
\]
to hold for $k = 1, \ldots, \delta$, $i = 1, \ldots, m - \delta$ and the equations
\[
  a_{k,i}a_{k,j} = 0
\]
to hold for $k = 1, \ldots, \delta,$ $i = 1, \ldots, m - \delta$, $j = 1, \ldots, m - \delta$ with $i \neq j$.

We must also impose that $(1, \ldots, 1)$ belongs to $\mathcal{U}$, for which we need to impose the equations
\[
  \sum_{j = 1}^{m - \delta} a_{k,j} - 1 = 0
\]
for each $k = 1, \ldots, \delta$.

Altogether, $\Ter^{\delta}_{\ZZ^m} \cap U$ is representable by the scheme
\[
  \Spec \ZZ[a_{i,j}] / I
\]
where $I$ is the ideal generated by the equations above. We observe that the factors $a_{k,i}$ and $1 - a_{k,i}$ of $a_{k,i} - a_{k,i}^2$ are relatively prime, so
we may break $\ZZ[a_{i,j}] / I$ into a product with $2^{\delta(m - \delta)}$ terms using the Chinese remainder theorem.
This amounts to imposing $a_{k,i} = 0$ or $a_{k,i} = 1$ for each $k,i$. The equations $a_{k,i}a_{k,j} = 0$ imply that at most
one of the $a_{k,i}$s for fixed $k$ can be nonzero. Similarly, considering the equations $\sum_{j = 1}^{m - \delta} a_{k,j} - 1 = 0$ we learn that at least one of the $a_{k,j}$s for a fixed $k$ must be nonzero. Altogether, we can enumerate the nonzero terms, each isomorphic to $\ZZ$, by picking for each $k = 1, \ldots, \delta$ for which $j = 1, \ldots, m - \delta$ the coefficient $a_{k,j}$ is 1.

It follows that 
\[
  \Ter^{\delta}_{\ZZ^m}|_{U} \cong \bigsqcup_{(m - \delta)^\delta} \Spec \ZZ
\]
where the disjoint union is over the functions $f : \{1, \ldots, \delta\} \to \{1, \ldots, m - \delta\}$, and the $\Spec \ZZ$ corresponding to $f$ corresponds to the algebra
\[
  \{ (a_1, \ldots, a_m) \in \OO_S^m \mid a_{m - \delta + i} = a_{f(i)} \text{ for each }i = 1,\ldots,\delta\}.
\]
Observe that the sets of indices whose coordinates have equal values form a partition $\calP$ of $\{1, \ldots, m\}$ into $m - \delta$ parts such that $1, \ldots, m - \delta$ belong to distinct parts. A dimension count shows $\sum_{P \in \calP} (|P| - 1) = \delta.$ 

The result follows by taking the union over the remaining open sets of the standard affine cover of $\Grass(m - \delta, \OO_{\Spec \ZZ}^m)$.
\end{proof}

\begin{corollary} \label{cor:e_gamma_is_m_gamma}
If $\Gamma$ is a combinatorial type of a curve, all of whose singularities have genus 0, then
\[
  \EE_\Gamma \cong \moduli_\Gamma.
\]
\end{corollary}
\begin{proof}
Since $\Tergl(\calP, 0, (1,\ldots,1)) \cong \Spec \ZZ$, the projection $\EE_\Gamma \to \moduli_\Gamma$ is an isomorphism.
\end{proof}

\begin{example}\label{ex:stable_equinormalized_curves} (The pre-image of $\Moduli_{g,n}$ under $\EE_{g,n} \to \UU_{g,n}$ is the disjoint union of the usual strata of $\Moduli_{g,n}$)
Let $\Gamma$ be a combinatorial type of a stable curve of genus $g$ with $n$ markings, and let $G$ be the corresponding dual graph. Denote by $\moduli_G$ the usual stratum in $\Moduli_{g,n}$ of stable curves with dual graph $G$. Then $\Aut(\Gamma) = \Aut(G)$, so
\begin{align*}
  \moduli_{\Gamma} &\cong \left[ \prod_{v \in K(\Gamma)} \moduli_{g(v), n(v) + m(v)} / \Aut(\Gamma) \right] \\
    &\cong \left[ \prod_{v \in V(G)} \moduli_{g(v), n(v) + m(v)} / \Aut(G) \right] \\
    &\cong \moduli_G.
\end{align*}

By Corollary \ref{cor:e_gamma_is_m_gamma}, $\EE_{\Gamma} \to \moduli_{\Gamma}$ is an isomorphism. The induced map $\moduli_{G} \cong \EE_{\Gamma} \to \Moduli_{g,n}$ is clearly identified
with the clutching morphism $\moduli_G \to \Moduli_{g,n}$. Since stable curves are Gorenstein, the total conductance $c$ of $\Gamma$ is $2\delta$. Finally, examining the defining equations of $\EE^{\delta,2\delta}_{g,n}$ in $\EE_{g,n}$ and $\EE_{\Gamma}$ inside $\EE^{\delta,2\delta}_{g,n}$ shows that the inclusion $\EE_{\Gamma} \to \EE_{g,n}$ is open.

Therefore, as $\Gamma$ varies over the combinatorial types of $n$-marked stable curves of genus $g$, the images of the $\EE_{\Gamma}$ in $\EE_{g,n}$ form an open substack isomorphic to $\bigsqcup_G \moduli_{G}$. As this clearly agrees with the preimage of $\Moduli_{g,n}$ in $\EE_{g,n}$ on geometric points, and an open substack is uniquely determined by its geometric points, we conclude that the pre-image of $\Moduli_{g,n}$ in $\EE_{g,n}$ is isomorphic to $\bigsqcup_{G} \moduli_G$, as claimed.
\end{example}

\section{Some computations in $\EE^{2,4}_{2,0}$}
\label{sec:territory_colliding_sings}

We conclude the paper with an extended example, studying the structure of the space $\EE^{2,4}_{2,0}$ parametrizing equinormalized curves of arithmetic genus 2 with total delta invariant 2 and only Gorenstein singularities. We continue to work in the category of $\QQ$-schemes, so for brevity, we will write $A_{\vec{c}}$
instead of $A_{\vec{c}} \otimes \QQ$.

\subsection{Computation of relevant territories} \label{ssec:territory_computations}
We start by computing the various relevant territories, namely $\Ter^{1,2}_{A}$ for $A = A_{(2)}$, $A_{(1,1)}$ as well as $\Ter^{2,4}_{A}$ for $A = A_{(1,1,1,1)}$, $A_{(2,1,1)}$, $A_{(2,2)}$, $A_{(3,1)}$, $A_{(4)}$. Taking appropriate connected components, we will find the territories of gluing data that appear as the fibers of strata. Recall that, given a choice of subalgebra $B \subseteq A \otimes k$, and a closed subscheme $Z \cong \Spec A \otimes k$ inside of a smooth curve $\tilde{C}$ one finds the corresponding curve singularities by taking the pushout
\[
\begin{tikzcd}
    \Spec A \otimes k \ar[r] \ar[d] & \tilde{C} \ar[d,"\nu"] \\
    \Spec B \ar[r] & C
\end{tikzcd}
\]
as in the proof of Theorem \ref{thm:territory_of_scheme_representable}. 

\bigskip

The computations with $\delta = 1$ are easy.

\medskip

$\vec{\mathbf{c}} = \mathbf{(2)}$: There is only one $\OO_S$-subalgebra of $A_{(2)} \otimes \OO_S = \OO_S[t]/(t^2)$, namely $\OO_S$, so $\Ter^{1,2}_{A_{(2)}} \cong \Spec \QQ$. Its unique point corresponds to a cusp singularity.

\medskip

$\vec{\mathbf{c}} = \mathbf{(1,1)}$: There is only one $\OO_S$-subalgebra of $A_{(1,1)} \otimes \OO_S = \OO_S \times \OO_S$, namely $\OO_S \cdot (1,1)$, so $\Ter^{1,2}_{A_{(1,1)}} \cong \Spec \QQ$. Its unique point corresponds to a node. 

\medskip

For the computations with $\delta = 2$, our strategy is first to find equations for subalgebras $B$ with a basis of the form $1, ae_1 + be_2 + ce_3$ where $1, e_1, e_2, e_3$ is a basis for the containing algebra $A$. Then, taking $\Proj$, we quotient by the scaling action of $\GG_m$ on the second basis element to forget the choice of basis. The results are illustrated in Figure \ref{fig:proper_territory_example}.

\begin{figure}
\begin{center}
{\renewcommand{\arraystretch}{2}
\setlength{\tabcolsep}{12pt}
\begin{tabular}{m{1.7cm}|m{2cm}m{1.8cm}m{1.3cm}m{2.3cm}m{1.8cm}}
 $\vec{c}$ & $(1,1,1,1)$ & $(2,1,1)$ & $(3,1)$ & $(2,2)$ & $(4)$ \\ \hline \\[-.5cm]
  \begin{minipage}{1.7cm}$\Ter^2_{A_{\vec{c}}}$\end{minipage} &
  \includegraphics[scale=.45]{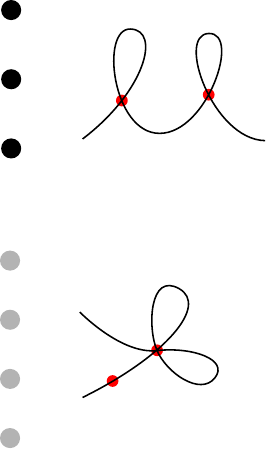} & 
  \includegraphics[scale=.45]{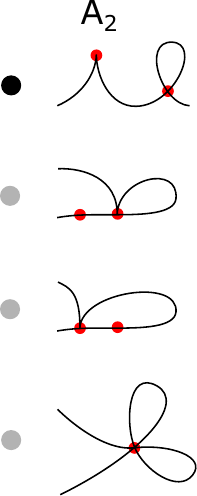} &
  \includegraphics[scale=.45]{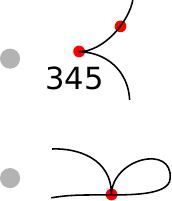} &
  \includegraphics[scale=.45]{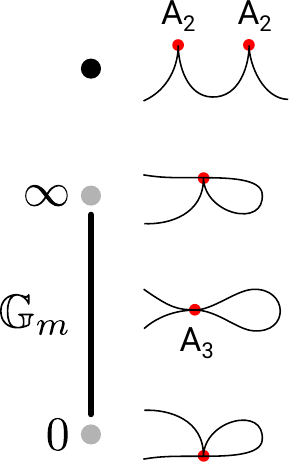} &
  \includegraphics[scale=.45]{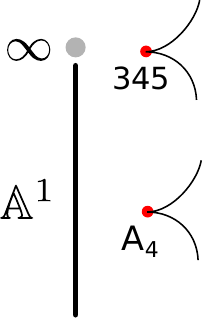} 
\end{tabular}
}
\end{center}
\caption{Sketches of the territory $\Ter^2_{A_{\vec{c}}}$ together with sketches of the associated singular curves $C$. The singularity isomorphic to $k\llbracket t^3, t^4, t^5\rrbracket$ is labeled ``345.'' Black points of the territory indicate singularities with conductance 4, i.e. points of $\Ter^{2,4}_{A_{\vec{c}}}$, while lighter grey points of the territory correspond to singularities with conductance 3, i.e. points of $\Ter^{2,3}_{A_{\vec{c}}}$.}
\label{fig:proper_territory_example}
\end{figure}

\medskip

$\vec{\mathbf{c}} = \mathbf{(1,1,1,1)}$: A basis of $A_{(1,1,1,1)} = \QQ^4$ is
\[
  (1,1,1,1), (0,1,0,0), (0,0,1,0), (0,0,0,1).
\]
Consider an $\OO_S$-submodule of $\OO_S \otimes_\QQ \QQ^4$ with basis $1, a(0,1,0,0) + b(0,0,1,0) + c(0,0,0,1)$. Now,
\[
  (a(0,1,0,0) + b(0,0,1,0) + c(0,0,0,1))^2 = a^2(0,1,0,0) + b^2(0,0,1,0) + c^2(0,0,0,1),
\]
so the submodule is an $\OO_S$-subalgebra if and only if the maximal minors of
\[
  \begin{pmatrix}
     0 & a^2 & b^2 & c^2 \\
     0 & a & b & c \\
     1 & 0 & 0 & 0
  \end{pmatrix}
\]
vanish. Equivalently, the maximal minors of
\[
  \begin{pmatrix}
    a^2 & b^2 & c^2 \\
    a & b & c
  \end{pmatrix}
\]
vanish. We conclude that
\[
  \Ter^2_{A_{(1,1,1,1)}} \cong \Proj \QQ[a, b, c]/(a^2b - ab^2, a^2c - ac^2, b^2c - bc^2).
\]
It has seven reduced $\Spec \QQ$ points. Three, corresponding to the subalgebras
\begin{align*}
  \QQ(1,1,1,1) + \QQ(0,1,1,0) & \quad (a = 1, b = 1, c = 0) \\
  \QQ(1,1,1,1) + \QQ(0,1,0,1) & \quad (a = 1, b = 0, c = 1) \\
  \QQ(1,1,1,1) + \QQ(0,0,1,1) & \quad (a = 0, b = 1, c = 1)
\end{align*}
correspond to curves with two nodes: two pairs of the points of $Z$ are identified under $\nu : \tilde{C} \to C$. For these subalgebras the conductances are all 1, so the total conductance is 4. This agrees with the fact that nodes are Gorenstein, see Lemma \ref{lem:conductor_bounds_and_gorenstein}.
The remaining four points of $\Ter^2_{A_{(1,1,1,1)}}$ correspond to the subalgebras
\begin{align*}
  \QQ(1,1,1,1) + \QQ(0,1,0,0) & \quad (a = 1, b = 0, c = 0), \\
  \QQ(1,1,1,1) + \QQ(0,0,1,0) & \quad (a = 0, b = 1, c = 0), \\
  \QQ(1,1,1,1) + \QQ(0,0,0,1) & \quad (a = 0, b = 0, c = 1), \\
  \QQ(1,1,1,1) + \QQ(0,1,1,1) & \quad (a = 1, b = 1, c = 1),
\end{align*}
which correspond in turn to curves with a single ordinary 3-fold point: three of the points of $Z$ are identified by $\nu : \tilde{C} \to C$, while the fourth point is ``spurious,'' as it is not crimped to a singularity. For each of these subalgebras exactly one of the conductances is 0, and accordingly the corresponding singularities are non-Gorenstein.

Therefore $\Ter^{2,4}_{A_{(1,1,1,1)}}$ consists of only the first three points corresponding to pairs of nodes. In terms of gluing data, the first point is $\Tergl(14/23, (0,0), (1,1,1,1))$, the second is $\Tergl(13/24, (0,0), (1,1,1,1))$, and the third is $\Tergl(12/34, (0,0), (1,1,1,1))$.

\medskip

$\vec{\mathbf{c}} = \mathbf{(2,1,1)}$: A basis of $A_{(2,1,1)} \cong \QQ[t]/(t^2) \times \QQ \times \QQ$ is given by
\[
  (1,1,1), (t,0,0), (0,1,0), (0,0,1).
\]
Since
\[
  (a(t,0,0) + b(0,1,0) + c(0,0,1))^2 = b^2(0,1,0) + c^2(0,0,1),
\]
we compute maximal minors of
\[
  \begin{pmatrix}
    0 & b^2 & c^2 \\
    a & b & c
  \end{pmatrix}
\]
and conclude
\[
  \Ter^2_{A_{(2,1,1)}} \cong \Proj \QQ[a,b,c]/(ab^2,ac^2,b^2c - bc^2).
\]

The $a = 1$ chart, isomorphic to $\Spec \QQ[b,c]/(b^2,c^2)$, possesses a single non-reduced point. It has residue field $\QQ$, and the unique $\QQ$-point corresponds to the algebra
\[
  \QQ(1,1,1) + \QQ(t,0,0) \quad (a = 1, b = 0, c = 0).
\]
The corresponding curve singularity is an ordinary triple point, with conductances $1,1,1$. Accordingly this singularity is non-Gorenstein. 

There are three remaining points. They are reduced, corresponding to the algebras
\begin{align*}
  k(1,1,1) + k(0,1,0) & \quad (a = 0, b = 1, c = 0)\\
  k(1,1,1) + k(0,0,1) & \quad (a = 0, b = 0, c = 1) \\
  k(1,1,1) + k(0,1,1) & \quad (a = 0, b = 1, c = 1).
\end{align*}
The first two of these correspond to curves with a cusp glued transversely to a smooth branch. Their conductances are respectively $2,0,1$ and $2,1,0$. Neither is Gorenstein. The last corresponds to a curve with a cusp at the first point and a node joining the last two points. Both singularities are Gorenstein. The cusp has genus 1 and conductance 2; the node has genus 0 and conductances 1, 1.

We conclude that $\Ter^{2,4}_{A_{(2,1,1)}} \cong \Spec \QQ$. In terms of gluing data, the unique point is equal to $\Tergl(1/23, (1,0), (2,1,1))$.

\medskip

$\vec{\mathbf{c}} = \mathbf{(3,1)}$: A basis of $A_{(3,1)} = \QQ[t]/(t^3) \times \QQ$ is given by
\[
  (1,1), (t,0), (t^2,0), (0,1).
\]
Since
\[
  (a(t,0) + b(t^2,0) + c(0,1))^2 = a^2(t^2,0) + c^2(0,1),
\]
we compute the maximal minors of
\[
  \begin{pmatrix}
    0 & a^2 & c^2 \\
    a & b & c
  \end{pmatrix}
\]
and conclude
\[
  \Ter^2_{A_{(3,1)}} \cong \Proj \QQ[a, b, c]/(a^3, ac^2, a^2c - bc^2).
\]

The $a = 1$ chart is empty. The $b = 1$ chart, isomorphic to $\Spec \QQ[a,c]/(a^3,a^2c,c^2)$, possesses a single non-reduced point with residue field $\QQ$. The non-reduced point corresponds to the subalgebra
\[
  \QQ(1,1) + \QQ(t^2,0) \quad (a = 0, b = 1, c = 0). 
\]
This subalgebra corresponds in turn to a singularity with a cusp glued transversely to a smooth branch. This is a non-Gorenstein singularity of genus 1, with conductances 2,1.

The $c = 1$ chart possesses a single reduced point, corresponding to the subalgebra
\[
  k(1,1) + k(0,1) \quad (a = 0, b = 0, c = 1).
\]
A corresponding equinormalized curve $\nu : \tilde{C} \to C$ sends the non-reduced point of $Z$ to a $\QQ[t^3,t^4,t^5]$-singularity, which is non-Gorenstein of genus 2 with a single branch conductance equal to 3. Meanwhile, $\nu : \tilde{C} \to C$ sends the reduced point of $Z$ to a smooth point.

We conclude that $\Ter^{2,4}_{A_{(3,1)}}$ is empty: there are no Gorenstein singularities with conductances $3,1$.

\medskip

$\vec{\mathbf{c}} = \mathbf{(2,2)}$:
A basis for $A_{(2,2)} = \QQ[t_1]/(t_1^2) \times \QQ[t_2]/(t_2^2)$ is given by
\[
  (1,1), (t_1, 0), (0,1), (0,t_2).
\]
Since
\[
  (a(t_1,0) + b(0,1) + c(0,t_2))^2 = b^2(0,1) + 2bc(0,t_2),
\]
we take the maximal minors of
\[
  \begin{pmatrix}
    0 & b^2 & 2bc \\
    a & b & c
  \end{pmatrix}
\]
and conclude
\[
  \Ter^2_{A_{(2,2)}} \cong \Proj \QQ[a,b,c]/(ab^2,2abc,b^2c).
\]

The $b = 1$ chart possesses a single reduced point, parametrizing the subalgebra
\[
  k(1,1) + k(0,1) \quad (a = 0, b = 1, c = 0).
\]
A corresponding equinormalized curve $\nu : \tilde{C} \to C$ crimps each of the points of $Z$ to a cusp on $C$. Each of the two cusps is a Gorenstein singularity of genus 1 with a single branch conductance of 2.

The $a = 1$ chart is isomorphic to $\Spec \QQ[b,c]/(b^2,2bc)$. Since 2 is invertible in $\QQ$, then this is $\AA^1$ together with an embedded point at the origin. The $\QQ$-points parametrize
subalgebras of the form
\[
  \QQ(1,1) + \QQ(t_1,ct_2) \quad (a = 1, b = 0, c \text{ free}).
\]
When $c \neq 0$, the corresponding curve has a tacnode, a Gorenstein singularity of genus 1. When $c = 0,$ the corresponding curve has a cusp glued transversely to a smooth branch.

The $c = 1$ chart is symmetric to the $a = 1$ chart and parametrizes algebras of the form
\[
  \QQ(1,1) + \QQ(at_1, t_2) \quad (a \text{ free}, b = 0, c = 1).
\]

Observe that the reduction of $\Ter^2_{A_{(2,2)}}$ consists of the union of a point with a $\PP^1$. Restricting to Gorenstein singularities, we have $\Ter^{2,4}_{A_{(2,2)}} \cong \Spec \QQ \sqcup \GG_m$. In terms of gluing data, $\Tergl(1/2, (1,1), (2,2))$ is the $\Spec \QQ$, while $\Tergl(12, 1, (2,2))$ is the $\GG_m$.

We remark that the $a = 1$ and $b = 1$ charts together show that $\Tergl(12, 1, (2,2))$ is an infinitesimal thickening of $\Ter^{1}_{A^+_{(2,2)}}$, which is isomorphic to $\PP^1$ by Example \ref{ex:crimping_all_2s}. Thus, Theorem \ref{thm:decomposition_into_territories_of_sings_no_conductance} cannot be strengthened to an isomorphism.

$\vec{\mathbf{c}} = \mathbf{(4)}:$ The algebra $A_{(4)} = \QQ[t]/t^4$ has the basis $1, t, t^2, t^3$. Since
\[
  (at + bt^2 + ct^3)^2 = a^2t^2 + 2abt^3,
\]
we take the maximal minors of the matrix
\[
  \begin{pmatrix}
     0 & a^2 & 2ab \\
     a & b & c
  \end{pmatrix}
\]
and conclude that $\Ter^2_{A_{(4)}} \cong \Proj \QQ[a,b,c]/(a^3,2a^2b, a^2c - 2ab^2)$. This is non-reduced, but its reduction is $\Proj \QQ[b,c] \cong \PP^1$. A point $[b : c] \in \PP^1(k)$ corresponds to the subalgebra $k \cdot 1 + k \cdot (bt^2 + ct^3).$ The corresponding singularity on $C$ is a ramphoid cusp for $b \neq 0$ and a $k[t^3,t^4,t^5]$-singularity for $b = 0$. (Cf. \cite[Lemma 2.1]{battistella_m_stable}.) Ramphoid cusps are Gorenstein, while the $k[t^3,t^4,t^5]$-singularity is not. Thus, since $\QQ$ has characteristic 0,
\[
  \Ter^{2,4}_{A_{(4)}} = \Tergl(1,2,(4)) \cong \Spec \QQ[a,c] / (a^3, a^2, a^2c - 2a)  \cong \AA^1. 
\]

\subsection{Combinatorial types and strata of $\EE^{2,4}_{2,0}$}
Next, we consider the possible combinatorial types $\Gamma$ for $\EE^{2,4}_{2,0}$. Since
\begin{align*}
  g(\Gamma) &= \sum_{k \in K} g(k) + \sum_{s \in S} \delta(s) - \# K + 1\\
  \implies \# K &= \sum_{k \in K} g(k) + 1,
\end{align*}
we conclude that $\Gamma$ can have 1 component of genus 0, one component of genus 1 and one of genus 0, one component of genus 2 and two of genus 0, or two components of genus 1 and one of genus 0. We know from our computations of territories above that the singularities of $\Gamma$ can only have the following invariants:
\begin{center}
\begin{tabular}{|c|c|c|c|} \hline
  $g$ & $m$ & $\delta$ & $\vec{c}$ \\ \hline
  0 & 2 & 1 & $(1,1)$ \\ \hline
  1 & 1 & 1 & $(2)$ \\ \hline
  1 & 2 & 2 & $(2,2)$ \\ \hline
  2 & 1 & 2 & $(4)$ \\ \hline
\end{tabular}
\end{center}

With these constraints, we find that there are 15 possible combinatorial types, which we display in Figure \ref{fig:E_24_20_combinatorial_types}. For four of these combinatorial types $\moduli_{\Gamma}$ and $\moduli_{\Gtil}$ differ, see Figure \ref{fig:E_24_20_combinatorial_types_circ}. 

\begin{figure}
\begin{center}
\begin{tabular}{| >{\centering\arraybackslash}m{.5in} | >{\centering\arraybackslash}m{2in}| c | c |} \hline
Name & Combinatorial type & $\moduli_{\Gamma}$ & $\EE_{\Gamma} \to \moduli_{\Gamma}$ fiber \\ \hline
$\Gamma_1$ & \begin{tikzpicture}[scale=1]
\draw (0,0) to[out=40,in=140] (1,0)
      (0,0) to[out=-40,in=220] (1,0)
      (1,0) to[out=40,in=140] (2,0)
      (1,0) to[out=-40,in=220] (2,0);
\draw[fill=white]
  (0,0) +(-.15,-.15) rectangle +(.15,.15)
  (1,0) circle (5pt)
  (2,0) +(-.15,-.15) rectangle +(.15,.15);
\end{tikzpicture} 
& $[\moduli_{0,4} / D_8]$ & $\Spec \QQ$ \\ \hline
$\Gamma_2$ & \begin{tikzpicture}[scale=1]
\draw (0,0) to[out=40,in=140] (1,0)
      (0,0) to[out=-40,in=220] (1,0)
      (1,0) --node[above]{\tiny $2$} (2,0);
\draw[fill=white]
  (0,0) +(-.15,-.15) rectangle +(.15,.15)
  (1,0) circle (5pt)
  (2,0) +(-.2,-.2) rectangle +(.2,.2);
\node at (2,0) {\tiny $1$};
\end{tikzpicture} 
& $[\moduli_{0,3} / \mathfrak{S}_2]$ & $\Spec \QQ$ \\ \hline
$\Gamma_3$ & \begin{tikzpicture}[scale=1]
\draw (0,0) --node[above]{\tiny $2$} (1,0)
      (1,0) --node[above]{\tiny $2$} (2,0);
\draw[fill=white]
  (0,0) +(-.2,-.2) rectangle +(.2,.2)
  (1,0) circle (5pt)
  (2,0) +(-.2,-.2) rectangle +(.2,.2);
\node at (0,0) {\tiny $1$};
\node at (2,0) {\tiny $1$};
\end{tikzpicture} 
& $[\moduli_{0,2} / \mathfrak{S}_2]$ & $\Spec \QQ$ \\ \hline
$\Gamma_4$ & \begin{tikzpicture}[scale=1]
\draw (0,0) to[out=40,in=140]node[above]{\tiny $2$} (1,0)
      (0,0) to[out=-40,in=220]node[below]{\tiny $2$} (1,0);
\draw[fill=white]
  (0,0) circle (5pt)
  (1,0) +(-.2,-.2) rectangle +(.2,.2);
\node at (1,0) {\tiny $1$};
\end{tikzpicture} 
& $[\moduli_{0,2} / \mathfrak{S}_2]$ & $\GG_m$ \\ \hline
$\Gamma_5$ & \begin{tikzpicture}[scale=1]
\draw (0,0) --node[above]{\tiny $4$} (1,0);
\draw[fill=white]
  (0,0) circle (5pt)
  (1,0) +(-.2,-.2) rectangle +(.2,.2);
\node at (1,0) {\tiny $2$};
\end{tikzpicture} 
& $\moduli_{0,1}$ & $\AA^1$ \\ \hline
$\Gamma_6$ & \begin{tikzpicture}[scale=1]
\draw (0,0) to[out=40,in=140] (1,0)
      (0,0) to[out=-40,in=220] (1,0)
      (1,0) -- (2,0)
      (2,0) -- (3,0);
\draw[fill=white]
  (0,0) +(-.15,-.15) rectangle +(.15,.15)
  (1,0) circle (5pt)
  (2,0) +(-.15,-.15) rectangle +(.15,.15)
  (3,0) circle (5pt);
\node at (1,0) {\tiny $1$};
\end{tikzpicture} 
& $[\moduli_{1,3} \times \moduli_{0,1} / \mathfrak{S}_2]$ & $\Spec \QQ$ \\ \hline
$\Gamma_7$ & \begin{tikzpicture}[scale=1]
\draw (0,0) -- (1,.5)
      (0,0) -- (1,-.5)
      (1,.5) -- (2,0)
      (1,-.5) -- (2,0);
\draw[fill=white]
  (0,0) circle (5pt)
  (1,.5) +(-.15,-.15) rectangle +(.15,.15)
  (1,-.5) +(-.15,-.15) rectangle +(.15,.15)
  (2,0) circle (5pt);
\node at (0,0) {\tiny $1$};
\end{tikzpicture} 
& $[\moduli_{1,2} \times \moduli_{0,2} / \mathfrak{S}_2]$ & $\Spec \QQ$ \\ \hline
$\Gamma_8$ & \begin{tikzpicture}[scale=1]
\draw (0,0) -- (2,0)
      (2,0) to[out=40,in=140] (3,0)
      (2,0) to[out=-40,in=220] (3,0);
\draw[fill=white]
  (0,0) circle (5pt)
  (1,0) +(-.15,-.15) rectangle +(.15,.15)
  (2,0) circle (5pt)
  (3,0) +(-.15,-.15) rectangle +(.15,.15);
\node at (0,0) {\tiny $1$};
\end{tikzpicture} 
& $[\moduli_{1,1} \times \moduli_{0,3} / \mathfrak{S}_2]$ & $\Spec \QQ$ \\ \hline
$\Gamma_9$ & \begin{tikzpicture}[scale=1]
\draw (0,0) --node[above] {\tiny $2$} (1,0)
      (1,0) -- (3,0);
\draw[fill=white]
  (0,0) +(-.2,-.2) rectangle +(.2,.2)
  (1,0) circle (5pt)
  (2,0) +(-.15,-.15) rectangle +(.15,.15)
  (3,0) circle (5pt);
\node at (0,0) {\tiny $1$};
\node at (1,0) {\tiny $1$};
\end{tikzpicture} 
& $\moduli_{1,2} \times \moduli_{0,1}$ & $\Spec \QQ$ \\ \hline
$\Gamma_{10}$ & \begin{tikzpicture}[scale=1]
\draw (0,0) -- (2,0)
      (2,0) --node[above] {\tiny $2$} (3,0);
\draw[fill=white]
  (0,0) circle (5pt)
  (1,0) +(-.15,-.15) rectangle +(.15,.15)
  (2,0) circle (5pt)
  (3,0) +(-.2,-.2) rectangle +(.2,.2);
\node at (0,0) {\tiny $1$};
\node at (3,0) {\tiny $1$};
\end{tikzpicture} 
& $\moduli_{1,1} \times \moduli_{0,2}$ & $\Spec \QQ$ \\ \hline
$\Gamma_{11}$ & \begin{tikzpicture}[scale=1]
\draw (0,0) --node[above] {\tiny $2$} (1,0)
      (1,0) --node[above] {\tiny $2$} (2,0);
\draw[fill=white]
  (0,0) circle (5pt)
  (1,0) +(-.2,-.2) rectangle +(.2,.2)
  (2,0) circle (5pt);
\node at (0,0) {\tiny $1$};
\node at (1,0) {\tiny $1$};
\end{tikzpicture} 
& $\moduli_{1,1} \times \moduli_{0,1}$ & $\GG_m$ \\ \hline
$\Gamma_{12}$ & \begin{tikzpicture}[scale=1]
\draw (0,0) -- (4,0);
\draw[fill=white]
  (0,0) circle (5pt)
  (1,0) +(-.15,-.15) rectangle +(.15,.15)
  (2,0) circle (5pt)
  (3,0) +(-.15,-.15) rectangle +(.15,.15)
  (4,0) circle (5pt);
\node at (0,0) {\tiny $2$};
\end{tikzpicture} 
& $\moduli_{2,0} \times \moduli_{0,1}^2$ & $\Spec \QQ$ \\ \hline
$\Gamma_{13}$ & \begin{tikzpicture}[scale=1]
\draw (0,0) -- (4,0);
\draw[fill=white]
  (0,0) circle (5pt)
  (1,0) +(-.15,-.15) rectangle +(.15,.15)
  (2,0) circle (5pt)
  (3,0) +(-.15,-.15) rectangle +(.15,.15)
  (4,0) circle (5pt);
\node at (2,0) {\tiny $2$};
\end{tikzpicture} 
& $[\moduli_{2,0} \times \moduli_{0,1}^2 / \mathfrak{S}_2]$ & $\Spec \QQ$ \\ \hline
$\Gamma_{14}$ & \begin{tikzpicture}[scale=1]
\draw (0,0) -- (4,0);
\draw[fill=white]
  (0,0) circle (5pt)
  (1,0) +(-.15,-.15) rectangle +(.15,.15)
  (2,0) circle (5pt)
  (3,0) +(-.15,-.15) rectangle +(.15,.15)
  (4,0) circle (5pt);
\node at (0,0) {\tiny $1$};
\node at (2,0) {\tiny $1$};
\end{tikzpicture} 
& $\moduli_{1,1} \times \moduli_{1,1} \times \moduli_{0,1}$ & $\Spec \QQ$ \\ \hline
$\Gamma_{15}$ & \begin{tikzpicture}[scale=1]
\draw (0,0) -- (4,0);
\draw[fill=white]
  (0,0) circle (5pt)
  (1,0) +(-.15,-.15) rectangle +(.15,.15)
  (2,0) circle (5pt)
  (3,0) +(-.15,-.15) rectangle +(.15,.15)
  (4,0) circle (5pt);
\node at (0,0) {\tiny $1$};
\node at (4,0) {\tiny $1$};
\end{tikzpicture} 
& $[\moduli_{1,1}^2 \times \moduli_{0,2} / \mathfrak{S}_2]$ & $\Spec \QQ$ \\ \hline
\end{tabular}
\end{center}
\caption{Combinatorial types in $\EE^{2,4}_{2,0}$. We label edges with their conductance and vertices with their genus. We omit conductances equal to 1 and genera equal to 0.}
\label{fig:E_24_20_combinatorial_types}
\end{figure}

\begin{figure}
\begin{center}
\begin{tabular}{| >{\centering\arraybackslash}m{.5in} | >{\centering\arraybackslash}m{2in}| c | c |} \hline
Name & Combinatorial type & $\moduli_{\Gtil}$ & $\EE_{\Gamma} \to \moduli_{\Gtil}$ fiber \\ \hline
$\Gamma_1$ & \begin{tikzpicture}[scale=1]
\draw (0,0) to[out=40,in=140] (1,0)
      (0,0) to[out=-40,in=220] (1,0)
      (1,0) to[out=40,in=140] (2,0)
      (1,0) to[out=-40,in=220] (2,0);
\draw[fill=white]
  (0,0) +(-.15,-.15) rectangle +(.15,.15)
  (1,0) circle (5pt)
  (2,0) +(-.15,-.15) rectangle +(.15,.15);
\end{tikzpicture} 
& $[\moduli_{0,4} / \mathfrak{S}_4]$ & $\Spec \QQ \sqcup \Spec \QQ \sqcup \Spec \QQ$ \\ \hline
$\Gamma_7$ & \begin{tikzpicture}[scale=1]
\draw (0,0) -- (1,.5)
      (0,0) -- (1,-.5)
      (1,.5) -- (2,0)
      (1,-.5) -- (2,0);
\draw[fill=white]
  (0,0) circle (5pt)
  (1,.5) +(-.15,-.15) rectangle +(.15,.15)
  (1,-.5) +(-.15,-.15) rectangle +(.15,.15)
  (2,0) circle (5pt);
\node at (0,0) {\tiny $1$};
\end{tikzpicture} 
& $[\moduli_{1,2} \times \moduli_{0,2} / \mathfrak{S}_2 \times \mathfrak{S}_2 ]$ & $\Spec \QQ \sqcup \Spec \QQ$ \\ \hline
$\Gamma_{13}$ & \begin{tikzpicture}[scale=1]
\draw (0,0) -- (4,0);
\draw[fill=white]
  (0,0) circle (5pt)
  (1,0) +(-.15,-.15) rectangle +(.15,.15)
  (2,0) circle (5pt)
  (3,0) +(-.15,-.15) rectangle +(.15,.15)
  (4,0) circle (5pt);
\node at (2,0) {\tiny $2$};
\end{tikzpicture} 
& $[\moduli_{2,0} \times \moduli_{0,1}^2 / \mathfrak{S}_2 \times \mathfrak{S}_2]$ & $\Spec \QQ \sqcup \Spec \QQ$ \\ \hline
$\Gamma_{15}$ & \begin{tikzpicture}[scale=1]
\draw (0,0) -- (4,0);
\draw[fill=white]
  (0,0) circle (5pt)
  (1,0) +(-.15,-.15) rectangle +(.15,.15)
  (2,0) circle (5pt)
  (3,0) +(-.15,-.15) rectangle +(.15,.15)
  (4,0) circle (5pt);
\node at (0,0) {\tiny $1$};
\node at (4,0) {\tiny $1$};
\end{tikzpicture} 
& $[\moduli_{1,1}^2 \times \moduli_{0,2} / \mathfrak{S}_2 \times \mathfrak{S}_2]$ & $\Spec \QQ \sqcup \Spec \QQ$ \\ \hline
\end{tabular}
\end{center}
\caption{Combinatorial types in $\EE^{2,4}_{2,0}$ for which $\moduli_\Gamma$ differs from $\moduli_{\Gtil}$.}
\label{fig:E_24_20_combinatorial_types_circ}
\end{figure}

\subsection{A closer look at curves normalized by $\PP^1$} \label{ssec:curves_normalized_by_p1}
For $\delta = 2, c =4$, the construction of Remark \ref{rem:e_bounded_conductance} goes as follows: We let $H$ be the relative Hilbert scheme of $4$ points in the universal curve of $\moduli^{2}_{2,0}$. We write $\tilde{C}_H$ for the universal curve over $H$ and $Z_H$ for its universal closed subscheme. Then $\Ter^2_{Z_H/\tilde{C}_H/H}$ parametrizes equinormalized curves $\nu : \tilde{C}_H \to C_H$ formed by crimping and gluing the length 4 subscheme $Z_H$ to a length $c - \delta = 2$ subscheme $\nu(Z_H)$ of $C_H$. We then set $\Ter^{2,\leq 4}_{2,0}$ to be the open and closed substack of $\Ter^{2}_{Z_H/\tilde{C}_H/H}$
on which the crimped curve $C$ is connected of arithmetic genus $2$. Finally, by Theorem \ref{thm:algebraicity}, $\EE^{2,4}_{2,0}$ is identified with the open substack of $\Ter^{2,\leq 4}_{2,0}$ on which the total conductance is exactly 4. We have a sequence of maps
\begin{equation} \label{eq:the_construction}
  \EE^{2,4}_{2,0} \subseteq \Ter^{2,\leq 4}_{2,0} \hookrightarrow \Ter^{2}_{Z_H/\tilde{C}_H/H} \to H \to \moduli^{2}_{2,0},
\end{equation}
where the first is an open immersion, and all others are proper.

We narrow our focus to the connected component $\moduli_{0,0} = [ \Spec \QQ / PGL(2) ]$ of $\moduli^{2}_{2,0}$ over which $\EE_{\Gamma}$ lives for $\Gamma_1, \ldots, \Gamma_5$. Consider the smooth cover $x = \Spec \QQ \to \moduli_{0,0}$ corresponding to the unmarked curve $\PP^1_{\QQ}$. We take the fiber of the whole construction \eqref{eq:the_construction} over $x$ to find a smooth-local description of $\EE^{2,4}_{2,0}$.
Write $\tilde{C} = \PP^1_\QQ$ for the pullback of the universal curve to $x$ and $Z \to \tilde{C}$ for the universal closed subscheme of degree $4$. To avoid clutter, we abuse notation by referring henceforth to $H|_x$ as $H$. Observe that $H$ is $\Hilb^{4}_{\PP^1_\QQ} \cong (\PP^1_\QQ)^{[4]} \cong \PP^4_\QQ$. Observe also that any curve normalized by $\tilde{C}$ is connected, so $\Ter^{2,\leq 4}_{2,0}|_x = \Ter^{2}_{Z/\tilde{C}/H}$.

In principle $\Ter^{2}_{Z/\tilde{C}/H}$ can be computed explicitly on sufficiently small open subsets of $H$, but the results are not particularly human-interpretable. For example, consider the chart $U = \Spec \QQ[b,c,d,e] \subseteq H$ parametrizing the subschemes of $\PP^1$ not meeting the point at infinity. Then $Z|_U \cong \Spec \QQ[b,c,d,e][t]/(t^4 + bt^3 + ct^2 + dt + e)$. The ring $\QQ[b,c,d,e][t]/(t^4 + bt^3 + ct^2 + dt + e)$ admits a $\QQ[b,c,d,e]$-basis $1, t, t^2, t^3$. Then, as in our computations above, we can compute the expression for $(xt + yt^2 + zt^3)^2$ in terms of this basis, then take Proj of the maximal minors of an appropriate matrix to check that it is in the span of 1 and $xt + yt^2 + zt^3$. With the aid of Macaulay2, we find
\[
  \Ter^{2}_{Z/\tilde{C}/H|_x}|_U \cong \Proj \QQ[b,c,d,e,x,y,z] / (f_1,f_2,f_3)
\]
where $b,c,d,e$ have degree 0, $x,y,z$ have degree 1, and the generators of the ideal are
\begin{align*}
f_1 &= b^{3}yz^{2}-b^{2}cz^{3}-2b^{2}y^{2}z+c^{2}z^{3}+bdz^{3}+by^{3} \\
   & \quad\quad +2bxyz+cy^{2}z-2cxz^{2} -dyz^{2}-ez^{3}-2xy^{2}+x^{2}z, \\
f_2 &= b^{2}cxz^{2}-b^{2}dyz^{2}-2bcxyz+2bdy^{2}z-c^{2}xz^{2}-bdxz^{2} \\
    & \quad \quad +cdyz^{2}+beyz^{2}+cxy^{2}-dy^{3}+2cx^{2}z-2ey^{2}z+exz^{2}-x^{3}, \text{ and } \\
f_3 &= b^{3}xz^{2}-b^{2}dz^{3}-2b^{2}xyz-2bcxz^{2}+2bdyz^{2}+cdz^{3} \\
  & \quad \quad +bez^{3}+bxy^{2}+2bx^{2}z+2cxyz-dy^{2}z-dxz^{2}-2eyz^{2}-2x^{2}y.
\end{align*}
According to Macaulay2, $\Ter^{2}_{Z/\tilde{C}/H}|_U$ has two irreducible components, both of dimension 4, and both dominating $U$.

A better approach for human-readability is to restrict to strata of $H \cong \PP^4$ over which $Z$ is an $\Spec A_{\vec{c}}$ bundle. These will be the pullbacks of the strata $\moduli_{\Gtil}$ to $x$. Then $\Ter^{2,\leq 4}_{2,0}|_x = \Ter^2_{Z/\tilde{C}/H}$ and $\EE^{2,4}_{2,0}|_x = \Ter^{2,4}_{Z/\tilde{C}/H}$ will restrict to bundles with fibers given by the computations in Section \ref{ssec:territory_computations}. Following the notation of Section \ref{sec:hilbert_stratification}, let $\tau : (\PP^1)^4 \to H$ be the map taking an ordered tuple of points $(q_1, \ldots, q_4)$ to the same points without order, let $\Diag_{\mathcal{P}} \subseteq (\PP^1)^4$ be the locus in which the $q_i$ coincide whose indices belong to the same part of the partition $\mathcal{P}$, and let $\Diag_{\mathcal{P}}^\circ$ be the induced locally closed loci.

If $\mathcal{P} = \{ P_1, \ldots, P_k \}$ and $c_i = |P_i|$ for each $i$, we claim $Z|_{\Diag_{\mathcal{P}}^\circ}$ is a $\Spec A_{\vec{c}}$ bundle. This can be seen explicitly as follows. The restriction of $\tau$ to $U$ can be written in coordinates as the map $\AA^4 \to U$ sending $(x_1, \ldots, x_4) \mapsto (-\sigma_1, \sigma_2, -\sigma_3, \sigma_4)$ where $\sigma_i$ is the degree $i$ elementary symmetric polynomial. Then we have
\[
 \OO_{Z} |_U\cong \QQ[x_1, \ldots, x_4][t]/((t - x_1) \cdots (t - x_4)).
\]
Restricting further to $\Diag_{\mathcal{P}}^\circ \times_H U$ sets $t - x_i$ equal to $t - x_j$ for $i \sim_{\mathcal{P}} j$ and makes $t - x_i$ relatively prime to $t - x_j$ for $i \not\sim_{\mathcal{P}} j$. Applying the Chinese Remainder Theorem,
\[
  \OO_{Z}|_{\Diag_{\mathcal{P}}^\circ \times_H U} \cong \OO_{\Diag_{\mathcal{P}}^\circ \times_H U} \otimes A_{\vec{c}}.
\]
The claim follows by applying the symmetries of $\PP^1$ to $U$ to obtain a cover of $\Diag_{\mathcal{P}}$.

At this point, we know that $\Ter^2_{Z/\tilde{C}/H}$ restricts to a Zariski-local $\Ter^2_{A_{\vec{c}}}$-bundle and $\EE^{2,4}_{2,0}$ restricts to a Zariski-local $\Ter^{2,4}_{A_{\vec{c}}}$-bundle on the loci $\Diag_{\mathcal{P}}^\circ$ of $(\PP^1)^4$. Theorem \ref{thm:hilbert_stratification} tells us that by taking images of the loci $\Diag_{\mathcal{P}}$ in $H$, we obtain a locally closed stratification of $H$ on which these will be \'etale local bundles.
Explicitly, there is a $4$-dimensional open stratum $H_{1,1,1,1} \subseteq \PP^4$ where the four points of $Z$ are all distinct,
a $3$-dimensional stratum $H_{2,1,1}$ where two points coincide, and similarly, a $2$-dimensional stratum $H_{3,1}$, a $2$-dimensional stratum $H_{2,2}$ and a $1$-dimensional stratum $H_{4}$, defined by
\[
\begin{array}{rclcrcl}
  H_{1,1,1,1} &=& H - \tau(\Diag_{12/3/4}) & \quad & H_{2,1,1} &=& \tau(\Diag_{12/3/4}) - \tau(\Diag_{123/4} \cup \Diag_{12/34}) \\
  H_{3,1} &=& \tau(\Diag_{123/4}) - \tau(\Diag_{1234}) & \quad & H_{2,2} &=& \tau(\Diag_{12/34}) - \tau(\Diag_{1234}) \\
 & & & & H_{4} &=& \tau(\Diag_{1234}).
\end{array}
\]
Since $\tau$ is proper, each stratum has a natural locally closed scheme structure. Explicit equations are unpleasant: for example, $\ol{H}_{2,1,1}$ is cut out inside of $\PP^4$ by the homogeneous degree four discriminant
\begin{align*}
& D_4 = 256a^3e^3-192a^2bde^2-128a^2c^2e^2+144a^2cd^2e -27a^2d^4+144ab^2ce^2-6ab^2d^2e \\
&\quad -80abc^2de+18abcd^3+16ac^4e-4ac^3d^2-27b^4e^2+18b^3cde -4b^3d^3-4b^2c^3e+b^2c^2d^2.
\end{align*}
In the notation of Section \ref{sec:hilbert_stratification}, using Theorem \ref{thm:hilbert_stratification} and the construction of $\moduli_{\Gtil}$, we have
\[
\begin{array}{rcccl}
  H_{1,1,1,1} &\cong &  \moduli_{\Gtil[1]}|_x & \cong & [\Diag_{1/2/3/4}^\circ / \mathfrak{S}_4] \cong \PP^4 - V(D_4) \\
  H_{2,1,1} &\cong & \moduli_{\Gtil[2]}|_x &\cong&  [\Diag_{12/3/4}^\circ / \mathfrak{S}_2] \\
  H_{2,2} &\cong & \moduli_{\Gtil[3]}|_x = \moduli_{\Gtil[4]}|_x &\cong& [\Diag_{12/34}^\circ / \mathfrak{S}_2] \cong \PP^2 - V(D_2) \\
  H_{4} & \cong& \moduli_{\Gtil[5]}|_x & \cong & \Diag_{1234} \cong \PP^1.
\end{array}
\]
where $D_k$ denotes the homogeneous degree $k$ discriminant. Upstairs, the pullbacks of $\EE^{2,4}_{2,0}|_x = \Ter^{2,4}_{Z/\tilde{C}/H}$ to the various strata of $H$ are:

\begin{tabular}{| c | c | c | c |} \hline
 locus in $H$ & restriction of $\EE^{2,4}_{2,0}$ & fiber of $\Ter^{2}_{Z/\tilde{C}/H}$ & fiber of $\EE^{2,4}_{2,0}$ to locus \\ \hline
 $H_{1,1,1,1}$ & $\EE_{\Gamma_1}|_x$ & $\Ter^2_{A_{(1,1,1,1)}}$ & $\Spec \QQ \sqcup \Spec \QQ \sqcup \Spec \QQ$ \\
 $H_{2,1,1}$ & $\EE_{\Gamma_2}|_x$ & $\Ter^2_{A_{(2,1,1)}}$ & $\Spec \QQ$ \\
 $H_{2,2}$ & $\EE_{\Gamma_3}|_x \sqcup \EE_{\Gamma_4}|_x$ & $\Ter^2_{A_{(2,2)}}$ & $\Spec \QQ \sqcup \GG_m$ \\
 $H_{3,1}$ & $\emptyset$ & $\Ter^2_{A_{(3,1)}}$ & $\emptyset$ \\
 $H_4$ & $\EE_{\Gamma_5}|_x$ & $\Ter^2_{A_{(4)}}$ & $\AA^1$ \\ \hline
\end{tabular}

\bigskip

We can use our arrangement to draw some conclusions about closures of strata of $\EE^{2,4}_{2,0}$. If the closure of $\EE_{\Gamma_i}$ intersects some $\EE_{\Gamma_j}$, for $1\leq i,j \leq 5$, then, because $x \to \moduli_{0,0}$ is a smooth cover, there must be a one-parameter family in $\EE^{2,4}_{2,0}|_x$ witnessing it. Projecting to $H$, the family must have generic member in one of $H_{1,1,1,1},\ldots, H_4$ and a limit in another. Because of the closure relationships of $H_{1,1,1,1},\ldots, H_4$ in $H$, we have containments:
\begin{align*}
  \ol{\EE_{\Gamma_1}} &\subseteq \EE_{\Gamma_1} \sqcup \EE_{\Gamma_2} \sqcup \EE_{\Gamma_3} \sqcup \EE_{\Gamma_4} \sqcup \EE_{\Gamma_5} \\
  \ol{\EE_{\Gamma_2}} &\subseteq \EE_{\Gamma_2} \sqcup \EE_{\Gamma_3} \sqcup \EE_{\Gamma_4} \sqcup \EE_{\Gamma_5} \\
  \ol{\EE_{\Gamma_3}} &\subseteq \EE_{\Gamma_3} \sqcup \EE_{\Gamma_5} \\
  \ol{\EE_{\Gamma_4}} &\subseteq \EE_{\Gamma_4} \sqcup \EE_{\Gamma_5} \\
  \ol{\EE_{\Gamma_5}} &= \EE_{\Gamma_5}.
\end{align*}

By the deformation theory of ADE singularities (see for example, \cite[Section 2.3]{yano_laza_simultaneous_reduction}):
\begin{itemize}
  \item A cusp ($A_2$) may deform into a single node ($A_1$).
  \item A tacnode ($A_3$) may deform into a single cusp ($A_2$), two nodes ($A_1 + A_1$), or one node ($A_1$).
  \item A ramphoid cusp ($A_4$) may deform into a single tacnode ($A_3$), a cusp and a node ($A_2 + A_1$), a single cusp ($A_2$), two nodes ($A_1 + A_1$), or one node ($A_1$).
\end{itemize}
In particular, a tacnode cannot deform to a cusp and a node, and a ramphoid cusp cannot deform to two cusps. This constrains the containments further:
\begin{align*}
  \ol{\EE_{\Gamma_1}} &\subseteq \EE_{\Gamma_1} \sqcup \EE_{\Gamma_2} \sqcup \EE_{\Gamma_3} \sqcup \EE_{\Gamma_4} \sqcup \EE_{\Gamma_5} \\
  \ol{\EE_{\Gamma_2}} &\subseteq \EE_{\Gamma_2} \sqcup \EE_{\Gamma_3} \sqcup \EE_{\Gamma_5} \\
  \ol{\EE_{\Gamma_3}} &\subseteq \EE_{\Gamma_3} \\
  \ol{\EE_{\Gamma_4}} &\subseteq \EE_{\Gamma_4} \sqcup \EE_{\Gamma_5} \\
  \ol{\EE_{\Gamma_5}} &= \EE_{\Gamma_5}.
\end{align*}

Using families of nodes degenerating to cusps, like those of Example \ref{ex:cusp_is_limit_of_nodes}, it is not hard to see that $\EE_{\Gamma_2} \subseteq \ol{\EE_{\Gamma_1}}$, $\EE_{\Gamma_3} \subseteq \ol{\EE_{\Gamma_1}}$, and $\EE_{\Gamma_3} \subseteq \ol{\EE_{\Gamma_2}}$. Similarly, considering the family of curves defined by $(y - x^2 - t)(y + x^2 + t)$ shows that $\EE_{\Gamma_4}$ intersects the closure of $\EE_{\Gamma_1}$. Computations with Macaulay2 verify that all of the remaining containments are equalities:
\begin{align*}
  \ol{\EE_{\Gamma_1}} &= \EE_{\Gamma_1} \sqcup \EE_{\Gamma_2} \sqcup \EE_{\Gamma_3} \sqcup \EE_{\Gamma_4} \sqcup \EE_{\Gamma_5} \\
  \ol{\EE_{\Gamma_2}} &= \EE_{\Gamma_2} \sqcup \EE_{\Gamma_3} \sqcup \EE_{\Gamma_5} \\
  \ol{\EE_{\Gamma_3}} &= \EE_{\Gamma_3} \\
  \ol{\EE_{\Gamma_4}} &= \EE_{\Gamma_4} \sqcup \EE_{\Gamma_5} \\
  \ol{\EE_{\Gamma_5}} &= \EE_{\Gamma_5}.
\end{align*}

\begin{remark} \label{rem:fixed_conductance_needed}
We make one final remark that supports the necessity of restricting to fixed conductance. Let $\nu_1: \PP^1 \to C_1$ be a general equinormalized curve gluing two pairs of points to two nodes,
and let $\nu_2 : \PP^1 \to C_2$ be an equinormalized curve gluing three points to an ordinary 3-fold point.
By our computation of $\Ter^{2}_{A_{\vec{c}}}$ in the $\vec{c} = (1,1,1,1)$ case, the preimage under the forgetful morphism $\Ter^{2,\leq 4}_{2,0} \to \EE^{2}_{2,0}$ of $\nu_1$ is a single point, while the preimage of $\nu_2$ is one-dimensional since the spurious point can vary arbitrarily in $\tilde{C}$ without changing $C$. Since $H|_x$ is connected, it follows that the dimension of fibers of $\Ter^{2,\leq 4}_{2,0} \to \EE^2_{2,0}$ is not locally constant, so this morphism is not an equivalence of fibered categories or even smooth. (In particular, it cannot be used to show algebraicity of $\EE^{2}_{2,0}$.)
\end{remark}

\section*{Index of notation}

We provide an index of notation to help the reader. Entries are organized by first appearance.

{\small
\begin{longtable}[H]{@{}>{$}l<{$} p{0.58\textwidth} l@{}}
\toprule
\textnormal{Symbol} & \textnormal{Meaning} & \textnormal{Defined} \\
\midrule
\endfirsthead
\toprule
\textnormal{Symbol} & \textnormal{Meaning} & \textnormal{Defined} \\
\midrule
\endhead
\bottomrule
\endfoot

\multicolumn{3}{@{}l}{\itshape\S1. Introduction and main objects}\\[2pt]
\UU_{g,n} & Stack of all connected, reduced, proper $n$-pointed curves of arithmetic genus $g$. & Def. \ref{def:U_gn} \\
\EE_{g,n} & Stack parametrizing families of $n$-pointed equinormalized curves $\nu : \tilde{C} \to C$ of arithmetic genus $g$. & Def. \ref{def:E_gn} \\
\EE^{\delta,c}_{g,n} & Locally closed substack of $\EE_{g,n}$: total $\delta$-invariant $\delta$, total conductance $c$. & Def. \ref{def:E_gn_delta_c} \\
\EE_\Gamma & Stratum in $\EE_{g,n}$ of equinormalized curves of combinatorial type $\Gamma$. & Def. \ref{def:e_gamma_moduli_functor}, \ref{def:strata_by_quotients} \\

\addlinespace[3pt]
\multicolumn{3}{@{}l}{\itshape\S\ref{sec:territories_algebras}. Moduli of subsheaves of algebras}\\[2pt]
\Ter^\delta_{\mathscr{A}} & The \emph{territory} of a finite locally free $\OO_S$-algebra $\mathscr{A}$; the moduli scheme parametrizing $\OO_S$-subalgebras of $\mathscr{A}$ of corank $\delta$. & Def. \ref{def:territory}, Thm. \ref{thm:territory_representable}\\
\Delta & Cokernel $\mathscr{A}_T/\mathscr{B}$ of a subalgebra inclusion. & Def. \ref{def:territory} \\
\cond_{\mathscr{A}_T/\mathscr{B}} & Conductor ideal sheaf of $\mathscr{B}$ in $\mathscr{A}_T$; equals $\AAnn_{\mathscr{B}}(\Delta)$. & Def. \ref{def:conductor} \\
\Cond_{\mathscr{A}_T/\mathscr{B}} & The same conductor, regarded as an ideal sheaf of $\mathscr{A}_T$ rather than of $\mathscr{B}$. & below Lem. \ref{lem:affine_conductor_largest_ideal} \\
\Delta' & The quotient $\mathscr{B}/\cond_{\mathscr{A}_T/\mathscr{B}}$. & Def. \ref{def:conductor} \\
\delta' & Rank of $\Delta'$. & Def. \ref{def:conductor} \\
c & \emph{Conductance} of a subalgebra; the rank of the conductor locus. & Def. \ref{def:conductance} \\
\Ter^{\delta,c}_{\mathscr{A}} & Locally closed subscheme of $\Ter^\delta_{\mathscr{A}}$ parametrizing subalgebras of conductance exactly $c$. & Def. \ref{def:ter_delta_c}, Thm. \ref{thm:delta_prime_stratification_affine} \\

\addlinespace[3pt]
\multicolumn{3}{@{}l}{\itshape\S\ref{sec:territories_singularities}. Territories for curve singularities}\\[2pt]
\delta(p) & The \emph{delta invariant} of a curve singularity $p$. & Def. \ref{def:sing_basics}\\
m(p) & The \emph{number of branches} of a curve singularity $p$. & Def. \ref{def:sing_basics}\\
g(p) & The \emph{genus} of a curve singularity $p$: $\delta(p) - m(p) + 1$. & Def. \ref{def:sing_basics}\\
\cond(p) & The \emph{conductor ideal} of a curve singularity $p \in C$, as an ideal sheaf of $C$. & Def. \ref{def:sing_basics} \\
\Cond(p) & The \emph{conductor ideal} of a curve singularity $p \in C$, as an ideal sheaf of $\tilde{C}$. & Def. \ref{def:sing_basics} \\
c(p) & The \emph{conductance} of a curve singularity $p$. & Def. \ref{def:sing_basics} \\
c(b) & The \emph{branch conductance} of a branch $b$ of a curve singularity. & Def. \ref{def:sing_basics} \\
\vec{c} & Vector of branch conductances $(c_1, \ldots, c_m)$; usually ordered so that $c_1 \geq \cdots \geq c_m$ with sum $c$. & Def. \ref{def:algebra_for_curve_sing} \\
A_{\vec{c}} & $\prod_{i=1}^m \ZZ[t_i]/(t_i^{c_i})$; germ of the normalization of an $m$-branch singularity with branch conductances $\vec c$. & Def. \ref{def:algebra_for_curve_sing}\\
A^+_{\vec{c}} & Subring of $A_{\vec c}$ with equal constant terms; germ of the seminormalization. & Def. \ref{def:algebra_for_curve_sing}\\
\Tersing(g,\vec{c}) & \emph{Territory of singularities} of genus $g$, conductances $\vec c$: $\Ter^{\delta,c}_{A_{\vec c}} \cap \Ter^{g}_{A^+_{\vec c}}$. & Def. \ref{def:territory_of_singularity}\\
(\mathcal{P}, g) & A \emph{gluing datum}: a partition $\mathcal P$ of $\{1,\dots,m\}$ and a genus function $g:\mathcal P \to \ZZ_{\ge 0}$. & Def. \ref{def:gluing_data} \\
\Tergl(\mathcal{P},g,\vec{c}) & \emph{Territory of a gluing datum}: open and closed locus of $\Ter^{\delta,c}_{A_{\vec{c}}}$ with branches grouped into singularities according to $\mathcal P$ with genera prescribed by $g$. & Def. \ref{def:territory_gluing_datum} \\

\addlinespace[3pt]
\multicolumn{3}{@{}l}{\itshape\S\ref{ssec:crimping}. Relationship to crimping spaces}\\[2pt]
\Aut^{[N]}_{k\to(A,J)} & Group scheme of level-$N$ automorphisms of $A/J^N$ preserving $J/J^N$. & Def. \ref{def:crimping_scheme} \\
\Aut^{[N]}_{k\to B\to(A,J)} & Subgroup additionally preserving $B/J^N$. & Def. \ref{def:crimping_scheme} \\
\Cr & Crimping scheme of a singularity: the fppf quotient $\Aut^{[N]}_{k\to(A,J)}/\Aut^{[N]}_{k\to B\to(A,J)}$. & Def. \ref{def:crimping_scheme}\\
\Aut^{[N]}_{k\to(A,J),0} & Subgroup of $\Aut^{[N]}_{k \to (A,J)}$ preserving the conductor $\cond_{A/B}/J^N$ (no branch-swapping across conductances). & Def. \ref{def:strict_crimping_scheme} \\
\Aut^{[N]}_{k \to B \to (A,J), 0} & Subgroup of $\Aut^{[N]}_{k \to B \to (A,J)}$ preserving the conductor $\cond_{A/B}/J^N$. & Def. \ref{def:strict_crimping_scheme} \\
\Cr_0 & Strict crimping scheme: the quotient by the conductor-preserving subgroups. & Def. \ref{def:strict_crimping_scheme} \\

\addlinespace[3pt]
\multicolumn{3}{@{}l}{\itshape\S\ref{sec:territories_schemes}. Territories of schemes concentrated on subschemes}\\[2pt]
\Ter^\delta_{Z/\tilde{X}/S} & $\delta$-territory of the $S$-scheme $\tilde{X}$ concentrated on the finite closed subscheme $Z$; parametrizes crimpings of $\tilde{X}$ within $Z$ of corank $\delta$. & Def. \ref{def:global_territory}, Def. \ref{def:territory_of_scheme} \\
\Ter^\delta_{\calZ/\tilde{\mathcal{X}}/\mathcal{S}} & Territory of the $\mathcal{S}$-algebraic stack $\tilde{X}$ concentrated on the finite closed substack $\calZ$. & Cor. \ref{cor:territory_of_stack} \\
\Ter^{\delta,c}_{\calZ/\tilde{\mathcal{X}}/\mathcal{S}} & Locally closed substack of $\Ter^\delta_{\calZ/\tilde{\mathcal{X}}/\mathcal{S}}$ with fixed conductance $c$. & Cor. \ref{cor:stacky_conductor_stratification} \\

\addlinespace[3pt]
\multicolumn{3}{@{}l}{\itshape\S\ref{sec:equinormalized_curves}. Equinormalizable curves of fixed conductance}\\[2pt]
\moduli^{\delta}_{g,n} & Moduli of possible source curves for $\EE_{g,n}^{\delta,c}$: smooth, proper, possibly disconnected $n$-marked curves $(\tilde C,(p_i))$ satisfying $\#K = 1 + \delta - g + \sum_{k \in K} g(k)$, where $K$ is the set of connected components. & Def. \ref{def:m_gn_delta} \\
\calH^{\delta,c}_{g,n} & Relative Hilbert scheme of $c$ points of the universal curve over $\moduli^{\delta}_{g,n}$ (a $c$-fold symmetric product). & Def. \ref{def:H_gn_delta_c} \\
\tilde{\mathcal{C}} & Universal curve over $\calH^{\delta,c}_{g,n}$. & Def. \ref{def:H_gn_delta_c} \\
\calZ & Universal closed subscheme of $\tilde{\mathcal{C}}$; to be used as conductor locus, supported on branches of singularities. & Def. \ref{def:H_gn_delta_c} \\
\Ter^{\delta,c}_{g,n} & Open-and-closed substack of $\Ter^{\delta,c}_{\calZ/\tilde{\calC}/\calH^{\delta,c}_{g,n}}$ where $\calC$ is connected of genus $g$; represents $\EE^{\delta,c}_{g,n}$. & Def. \ref{def:ter_gn_delta_c} \\
\Ter^{\delta,\le c}_{g,n} & Bounded-conductance enlargement; proper over $\moduli^{\delta}_{g,n}$, contains $\EE^{\delta,c}_{g,n}$ as an open substack. & Remark \ref{rem:e_bounded_conductance}\\

\addlinespace[3pt]
\multicolumn{3}{@{}l}{\itshape\S\ref{sec:hilbert_stratification}. Stratification of the Hilbert scheme of points}\\[2pt]
\mathfrak{S}_m  & Symmetric group on $\{ 1, \ldots, m \}$. & Page \pageref{sigma_m_first_ref} \\
\calP & A set partition of $\{1, \ldots, m\}$. & above Eq. \eqref{eq:d_tilde_p} \\
\Diag_{\mathcal{P}} & Diagonal in the $m$-fold product $C^m$ for $\mathcal P$: the points $\vec{x}$ such that $x_i=x_j$ when $i\sim_{\mathcal P} j$. & Eq. \eqref{eq:d_tilde_p} \\
\Diag^\circ_{\mathcal{P}} & Open stratum of $\Diag_{\mathcal P}$ (collisions exactly $\mathcal P$). & Eq. \eqref{eq:d_tilde_circ_p} \\
{}[\mathcal{P}] & Integer partition $|P_1|+\cdots+|P_k|=m$ underlying $\mathcal P$. & above Eq. \eqref{eq:d_p} \\
\imDiag_{[\mathcal{P}]} & Scheme-theoretic image of $\Diag_{\mathcal P}$ in $C^{(m)}$. & Eq. \eqref{eq:d_p} \\
\imDiag^\circ_{[\mathcal{P}]} & Open stratum of $\imDiag_{[\mathcal P]}$ (collisions exactly $[\mathcal{P}]$). & Eq. \eqref{eq:d_circ_p} \\
\mathfrak{S}_{\mathcal{P}} & Group of permutations of the parts of $\mathcal{P}$ which preserve the size of the parts. & Eq. \eqref{eq:sigma_p} \\

\addlinespace[3pt]
\multicolumn{3}{@{}l}{\itshape\S\ref{sec:combinatorial_type}. Combinatorial types}\\[2pt]
\Gamma & Combinatorial type $(V=S\sqcup K, B, D, \sigma,\tau,\mu,\epsilon,g,c)$ of an equinormalized curve. & Def. \ref{def:combinatorial_type} \\
\quad\quad S & Singularity vertices. &  \\
\quad\quad K & Component vertices. &  \\
\quad\quad V & Vertices; $V = S \sqcup K$. &  \\
\quad\quad B & Branches/edges. & \\
\quad\quad D & Distinguished points. &  \\
\quad\quad \sigma & Branch$\to$singularity function. &  \\
\quad\quad \tau & Branch$\to$component function. &  \\
\quad\quad \mu & Marking function. &  \\
\quad\quad \epsilon & Distinguished-point$\to$component function. & \\
\quad\quad g & Vertex$\to$genus function. &  \\
\quad\quad c & Branch$\to$branch-conductance function. & \\
g(\Gamma) & Genus $\sum_{v \in V} g(v) + \#B - \#V + 1$ of a combinatorial type $\Gamma$. & Def. \ref{def:comb_type_addition_notions} \\
\val(s) & Valence of a singularity vertex $s$. & Def. \ref{def:comb_type_addition_notions}\\
\delta(s) & $\delta$-invariant $g(s)+\val(s)-1$ of a singularity vertex $s$. & Def. \ref{def:comb_type_addition_notions} \\
\Gtil & Underlying $\tilde C$-type of $\Gamma$ (forgetting $S$ and $\sigma$). & Def. \ref{def:c_tilde_type} \\

\addlinespace[3pt]
\multicolumn{3}{@{}l}{\itshape\S\ref{sec:stratification}. Stratification by combinatorial type}\\[2pt]
\calH_{\vec{c}} & Locally closed stratum of $\calH^{\delta,c}_{g,n}$ where the points of $\calZ$ have multiplicities exactly $\vec c$; branches \emph{unordered}. & Def. \ref{def:H_vec_c}\\
\Hrig_{\vec{c}} & Cover of $\calH_{\vec c}$ with ordered branch markings $q_1, \ldots, q_m$. & Def. \ref{def:H_ord_vec_c} \\
\mathfrak{S}_{\vec{c}} & Subgroup of $\mathfrak S_m$ permuting equal-conductance indices. & Eq. \eqref{eq:sigma_vec_c} \\
\Terunrig_{\vec{c}} & Pullback of $\Ter^{\delta,c}_{\calZ/\tilde\calC/\calH^{\delta,c}_{g,n}}$ to $\calH_{\vec c}$ (unordered branches). & Def. \ref{def:ter_vec_c_and_friends} \\
\Terrig_{\vec{c}} & Pullback $\Ter^{\delta,c}_{\calZ/\tilde\calC/\calH^{\delta,c}_{g,n}}$ to $\Hrig_{\vec c}$ (ordered branches). & Def. \ref{def:ter_vec_c_and_friends}\\
\calZ_{\vec{c}},\ \Zrig_{\vec{c}} & Pullbacks of $\calZ$ to $\calH_{\vec c}$ and $\Hrig_{\vec c}$ (likewise $\Crig_{\vec c},\Ctilrig_{\vec c}$). & Def. \ref{def:ter_vec_c_and_friends}\\

(\Gamma,\beta),\ (\Gtil,\beta) & Rigidified combinatorial / $\tilde C$-type: adds a bijection $\beta:\{1,\dots,m\}\to B$ with $c(\beta(i))=c_i$. & Def. \ref{def:rigid_combinatorial_type}\\

\calH_{(\Gtil,\beta)} & Locally closed locus of $\Hrig_{\vec c}$ with rigidified $\tilde C$-type $(\Gtil,\beta)$. & Def. \ref{def:H_gamma_beta} \\

\Ter_{(\Gamma,\beta)} & Locally closed locus of $\Terrig_{\vec c}$ with combinatorial type $(\Gamma,\beta)$; the rigidified stratum territory. & Def. \ref{def:ter_gamma_beta} \\

T_{\mathcal{P},g} & Open-and-closed piece of $\Terrig_{\vec c}$ indexed by a gluing datum $(\mathcal P,g)$. & Lem. \ref{lem:territory_bundle_decomposition} \\
\moduli_{\Gtil} & $[(\calH_{(\Gtil,\beta)}\cdot\mathfrak S_{\vec c})/\mathfrak S_{\vec c}]$; base of the $\tilde C$-type bundle, a substack of $\calH^{\delta,c}_{g,n}$. & Def. \ref{def:strata_by_quotients}, Thm. \ref{thm:e_gamma_meh_fiber_bundle} \\
\moduli_\Gamma & $[\prod_{v\in K}\moduli_{g(v),n(v)+m(v)}/\Aut(\Gamma)]$; base of the combinatorial-type fiber bundle (Thm.~IV). & Thm. \ref{thm:e_gamma_fiber_bundle}\\
n(v),\ m(v) & Number of distinguished points adjacent to $v$; number of branches incident to $v$. & Thm. \ref{thm:e_gamma_fiber_bundle}\\

\end{longtable}
}

\bibliographystyle{amsalpha}
\bibliography{stratification}

\end{document}